\theoremstyle{plain}
\newtheorem{theorem}{Theorem}[section]
\newtheorem{lemma}[theorem]{Lemma}
\newtheorem{corollary}[theorem]{Corollary}
\newtheorem{proposition}[theorem]{Proposition}
\newtheorem{conjecture*}{Conjecture}
\newtheorem{theorem*}{Theorem}
\newtheorem{corollary*}{Corollary}
\newtheorem{conjecture}[theorem]{Conjecture}
\theoremstyle{definition}
\newtheorem{hypothesis}[theorem]{Hypothesis}
\theoremstyle{definition}
\newtheorem{definition}[theorem]{Definition}
\newtheorem{remark}[theorem]{Remark}
\font\russ=wncyr10  1
\def\sha{\hbox{\russ\char88}}
\DeclareMathOperator{\Gal}{Gal}
\DeclareMathOperator{\Hom}{Hom}
\DeclareMathOperator{\N}{N}
\DeclareMathOperator{\coker}{coker}
\newcommand{\CC}{\mathbb{C}}
\newcommand{\FF}{\mathbb{F}}
\newcommand{\GG}{\mathbb{G}}
\newcommand{\QQ}{\mathbb{Q}}
\newcommand{\RR}{\mathbb{R}}
\newcommand{\ZZ}{\mathbb{Z}}
\newcommand{\cN}{\mathcal{N}}
\newcommand{\cO}{\mathcal{O}}
\newcommand{\fp}{\mathfrak{p}}
\newcommand{\fz}{\mathfrak{z}}
\newcommand{\catname}[1]{\textnormal{{\textsf{#1}}}}
\newcommand{\DR}{\catname{R}}
\newcommand{\DL}{\catname{L}}
\newcommand{\rgamma}{\DR\Gamma}
\newcommand{\lotimes}{\otimes^{\DL}}
\begin{document}

%%%%%%%%%%%%%%%%%%%%%%%%%%%%%%%
%%% TITLE, AUTHOR, ABSTRACT %%%
%%%%%%%%%%%%%%%%%%%%%%%%%%%%%%%
\title[]{On derivatives of Kato's Euler system\\
 and the Mazur-Tate conjecture}

\author{David Burns, Masato Kurihara and Takamichi Sano}
%\thanks{Preliminary version of June, 2015}

\begin{abstract} We provide a new interpretation of the Mazur-Tate Conjecture and then use it to obtain the first (unconditional) theoretical evidence in support of the conjecture for elliptic curves of strictly positive rank. \end{abstract}

\address{King's College London,
Department of Mathematics,
London WC2R 2LS,
U.K.}
\email{david.burns@kcl.ac.uk}

\address{Keio University,
Department of Mathematics,
3-14-1 Hiyoshi\\Kohoku-ku\\Yokohama\\223-8522,
Japan}
\email{kurihara@math.keio.ac.jp}

\address{Osaka City University,
Department of Mathematics,
3-3-138 Sugimoto\\Sumiyoshi-ku\\Osaka\\558-8585, 
Japan}
\email{sano@sci.osaka-cu.ac.jp}

\maketitle

%\section{Introduction} \label{Intro}%This is an updated version of the submitted file dburns.tex
\tableofcontents

\section{Introduction} \label{Intro}
%\subsection{Background and Mazur-Tate conjecture} \label{Intro1}

\subsection{Discussion of the main results} 
Let $E$ be an elliptic curve over $\QQ$ and write $r$ for the rank of its Mordell-Weil group $E(\QQ)$. 

In \cite{MT} Mazur and Tate formulated a `refined conjecture of Birch and Swinnerton-Dyer type' for $E$ relative to a finite real abelian extension $F$ of $\QQ$. Set $G := \Gal(F/\QQ)$ and write $I(G)$ for the augmentation ideal of the integral group ring $\ZZ[G]$. Then, under the assumption that $E$ has good reduction at ramifying primes in $F/\QQ$, the conjecture of Mazur and Tate predicts, roughly speaking, that an element of $\QQ[G]$ constructed from modular symbols associated to $E$ belongs to the $r$-th power of $I(G)$ and, further, that its image in the quotient group $I(G)^r/I(G)^{r+1}$ is equal to the product of the discriminant of a canonical $G$-valued pairing on $E(\QQ)$ defined using the geometrical theory of biextensions by a suitable ratio of the orders of finite groups that occur in the Birch and Swinnerton-Dyer Conjecture for $E$ over $\QQ$. For convenience, we refer to the prediction that the modular element belongs to $I(G)^r$ and to the predicted formula for its image in $I(G)^r/I(G)^{r+1}$ as respectively the `order of vanishing' and `leading-term' components of the Mazur-Tate Conjecture.

If $r = 0$, then the full Mazur-Tate Conjecture (which is stated precisely as Conjecture \ref{mtconj} below) is easily seen to be equivalent to the original conjecture of Birch and Swinnerton-Dyer (see Remark \ref{rem bsd}) and if $r > 0$, then the  order of vanishing component of the conjecture has proved  amenable to analysis via Euler system methods by using Kato's zeta elements (see the recent article of Ota \cite{ota}). However, if $r > 0$, then the leading-term component of the conjecture has remained stubbornly mysterious and, even now, there is no conceptual understanding of it or theoretical evidence for it and the only supporting numerical evidence is for the case $r =1$ (for details of which see, for example, the recent article of Portillo-Bobadilla \cite{portillo}). 

Notwithstanding these difficulties, Mazur and Tate's celebrated conjecture has been very influential and led to the study of a range of similar conjectures, both in the setting of elliptic curves (for example, by Darmon and by Bertolini and Darmon) and in the setting of the multiplicative group (for example, by Gross, by Tate, by Darmon and by Mazur and Rubin). 

In the sequel we assume $r > 0$. In this case, we formulated in an earlier article an explicit refinement of  Perrin-Riou's conjecture (from \cite{PR}) concerning the logarithm of Kato's zeta element at a fixed prime $p$. We recall that the `Generalized Perrin-Riou Conjecture' formulated in \cite{bks4} predicts a precise congruence relation between a natural $(r-1)$-st order `Darmon derivative' of the zeta element at $p$ of $E$ over an arbitrary real abelian field and the value at $s=1$ of the $r$-th derivative of the Hasse-Weil $L$-function $L(E,s)$ of $E$ over $\QQ$. In particular, the special case of the conjecture relating to subfields of the cyclotomic $\ZZ_p$-extension of $\QQ$ (which is stated explicitly as Conjecture \ref{GPR20} below) is known to be equivalent in the case $r = 1$ to Perrin-Riou's original conjecture (see \cite[Rem. 4.10]{bks4}) and in the case of general $r$ to follow, up to multiplication by an element of $\ZZ_p^\times$, from the validities of the relevant cases of the Birch and Swinnerton-Dyer conjecture and generalized Iwasawa main conjecture (see \cite[Th. 7.3]{bks4}), and hence to follow essentially from the relevant case of the equivariant Tamagawa number conjecture.

\vspace{5mm}

Building on this earlier approach, in the current article we now develop techniques that allow us to prove results of the following sort.  

To state the result we write ${\rm Tam}(E)$ for the product of the Tamagawa factors of $E$ at each prime of bad reduction (see \S\ref{bsd sec}). In addition, for each prime $p$ we regard the group $E[p]$ of $\overline{\QQ}$-rational points of $E$ of order dividing $p$ as a module over $\Gal(\overline{\QQ}/\QQ)$ in the natural way, and we write $E^{\rm ns}(\FF_p)$ for the group of non-singular $\FF_p$-rational points of the reduction of $E$ modulo $p$. 

%In the sequel we write the degree of a finite extension $K$ of $\QQ$ as $[K:\QQ]$ and the cardinality of a finite set $X$ as $\#X$.  

\begin{theorem} \label{TheoremIntroduction} Let $E$ be an elliptic curve over $\QQ$ and $F$ a finite real abelian extension of $\QQ$ that satisfies all of the following conditions.   
\begin{itemize}
\item[(a)] The conductor $m$ of $F$ is square-free and coprime to the conductor $N$ of $E$.
\item[(b)] $[F:\QQ]$ is coprime to $6\cdot m\cdot N\cdot {\rm Tam}(E)$ and to $\#E^{\rm ns}(\FF_p)$ for every prime divisor $p$ of $m\cdot N\cdot [F:\QQ]$. %(In particular, $p$ is odd and $p\nmid {\rm Tam}(E)$.)
\item[(c)] For every prime divisor $p$ of $[F:\QQ]$ the action of $\Gal(\overline{\QQ}/\QQ)$ on $E[p]$ is surjective.
\end{itemize}
Then the Mazur-Tate Conjecture is valid for $E$ relative to $F/\QQ$ whenever both of the following conditions are satisfied: 
\begin{itemize}
\item[(d)] $E$ validates the Birch and Swinnerton-Dyer Conjecture over $\QQ$;
\item[(e)] If $r > 0$, then for every prime divisor $p$ of $[F:\QQ]$, the Generalized Perrin-Riou Conjecture is valid for $E$ relative to the cyclotomic $\ZZ_p$-extension of $\QQ$ (see Conjecture \ref{GPR20} for the Generalized Perrin-Riou Conjecture).
\end{itemize}
\end{theorem}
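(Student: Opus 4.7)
The plan is to separate the Mazur-Tate Conjecture into its order-of-vanishing and leading-term components and to exploit Kato's Euler system through the Generalized Perrin-Riou framework. For the order-of-vanishing assertion $\theta_{F/\QQ} \in I(G)^r$, I would invoke an Ota-type Euler system argument, which is known to work under hypotheses (a)--(c): hypothesis (c) supplies the absolute irreducibility of $E[p]$ that is needed to control the Galois cohomology of the Tate module at each $p\mid [F:\QQ]$, while hypotheses (a) and (b) ensure that the Kolyvagin-style combinatorics, derivative constructions and descent calculations involve no extraneous denominators. This component of the conjecture is therefore \emph{unconditional} under the stated hypotheses and requires neither (d) nor (e).

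For the leading-term assertion I would proceed in three stages. First, using the coprimality conditions in (b), reduce the desired equality in $I(G)^r/I(G)^{r+1}$ to its $p$-primary component for each prime $p\mid [F:\QQ]$; both sides of the Mazur-Tate formula decompose cleanly since $6\cdot m\cdot N\cdot{\rm Tam}(E)$ and each $\#E^{\rm ns}(\FF_p)$ are invertible $p$-adically. Second, for each such $p$, apply Kato's reciprocity---which realises the modular symbols as images of his zeta element under the dual exponential map---to identify the image of $\theta_{F/\QQ}$ modulo $I(G)^{r+1}$ (after projection to the $p$-part) with a suitable $(r-1)$-st Darmon derivative of Kato's zeta element taken along a subtower of the cyclotomic $\ZZ_p$-extension of $F$. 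Invoking hypothesis (e), i.e.\ the Generalized Perrin-Riou Conjecture, this derivative can then be written as $L^{(r)}(E,1)/r!$ times the discriminant of an intrinsic $p$-adic pairing on $E(\QQ)\otimes\ZZ_p$. Third, apply hypothesis (d) to replace $L^{(r)}(E,1)/r!$ by the usual BSD arithmetic invariants---regulator, $\#\sha(E/\QQ)$, Tamagawa factors, and period---thus producing a formula for $\theta_{F/\QQ}\bmod I(G)^{r+1}$ that matches the Mazur-Tate prediction up to the identification of two pairings.

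The principal obstacle will be precisely this final identification of pairings. The Mazur-Tate pairing is constructed globally from biextensions of $E$ by $\GG_m$, whereas the pairing in the Generalized Perrin-Riou Conjecture is intrinsically $p$-adic, defined via Kato-style Coleman maps and derivative constructions in the spirit of Nekovar. Matching them demands a place-by-place analysis: at archimedean and good-reduction primes a direct comparison ought to suffice, but at primes ramifying in $F/\QQ$, where the local contributions to the biextension pairing are non-trivial, one needs hypothesis (a) that $m$ is square-free and coprime to $N$, together with the coprimality of $[F:\QQ]$ to each $\#E^{\rm ns}(\FF_p)$ from (b), to force the local terms of the two pairings to agree. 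A clean way to execute the matching would be to exhibit both pairings as manifestations of a single universal pairing on a Selmer complex for $T_pE$ over $F$, thereby reducing the identification to a statement in local duality at the ramified primes.
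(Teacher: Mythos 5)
Parts of your outline do track the paper: the reduction to $p$-primary components, the use of Kato's reciprocity law to express $\theta_{F,S}$ in terms of Kato's zeta element over $F$ (this is Proposition \ref{modularkato}), and your idea of comparing the biextension pairing with a $p$-adic pairing by realising both on a single Selmer complex (the paper does exactly this, quoting Burns--Macias Castillo to identify the Mazur--Tate pairing with the Bockstein pairing attached to $\rgamma_f(F,T)$). The genuine gap is in your second stage, which is the only place hypothesis (e) enters. The Generalized Perrin-Riou Conjecture is a statement purely in the cyclotomic direction: it is an identity in $E(\QQ)\otimes I_{\infty}^{r-1}/I_{\infty}^{r}$, where $I_\infty$ is the augmentation ideal of $\ZZ_p[[\Gal(\QQ_\infty/\QQ)]]$. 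The Mazur--Tate leading term lives in $I_F^{r}/I_F^{r+1}$ for $G=\Gal(F/\QQ)$, and under (a), (b) one has $p\nmid m$, so $F\cap\QQ_\infty=\QQ$: the group $G$ is not a quotient of $\Gal(\QQ_\infty/\QQ)$, and there is no ``subtower of the cyclotomic $\ZZ_p$-extension of $F$'' whose $(r-1)$-st Darmon derivative computes $\theta_{F,S}$ modulo $I_F^{r+1}$. As written, your step 2 therefore does not connect hypothesis (e) to the quantity you need to control; knowing the cyclotomic derivative of $z_\QQ$ by itself places no constraint on $\theta_{F,S}$ in the $G$-direction.

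What is missing is the transfer mechanism the paper builds for precisely this purpose. Kato's Euler system over $F\cdot\QQ_\infty$ is fed into the equivariant theory of Kolyvagin and Stark systems (Lemma \ref{kataoka lemma}), which shows that the zeta element lies in the image of the rank-one module ${\det}_{\Lambda_{F_\infty}}^{-1}(\rgamma(\ZZ_\Sigma,T_{F_\infty}))$ and hence defines a determinantal zeta element $\fz_{F_\infty}$ whose projections $\fz_F$ and $\fz_\QQ$ are compatible. Hypothesis (e), converted by (d) into its algebraic form (replacing $L^{(r)}(E,1)$ by the BSD invariants, Lemma \ref{equivalence}), then pins down the projection over $\QQ$, giving $\fz_\QQ=\eta^{\rm alg}$; this uses the predicted nonvanishing $R^{\rm Boc}\neq 0$ to make the relevant Bockstein map injective (Proposition \ref{main0}). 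Only after this rigidity step does the explicit Selmer-complex computation (Propositions \ref{comm1} and \ref{comm2}, Lemma \ref{lemalg}) convert $(\nu_f\circ\varphi_F)(\fz_F)=\varphi_\QQ(\eta^{\rm alg})$ into the predicted formula for $\theta_{F,S}$ in $I_F^r/I_F^{r+1}$, with the order-of-vanishing statement coming unconditionally from the containment of the image of $\pi_{F,f}$ in $I_F^r$ (consistent with your Ota-style claim). Your proposal contains no substitute for this bridge, so the leading-term argument does not go through. A smaller omission: for primes $\ell\nmid[F:\QQ]$ the containment $\theta_{F,S}\in I(G)^r\otimes_\ZZ\ZZ_\ell$ is not automatic; it requires $L_S(E,1)=0$, which the paper extracts from hypothesis (d) together with $r>0$.
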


This result shows that, under certain mild technical hypotheses, the `refined' nature of the Mazur-Tate Conjecture in comparison with the Birch and Swinnerton-Dyer Conjecture is accounted for by our Generalized Perrin-Riou conjecture in \cite{bks4}. Since the latter conjecture can itself be deduced from the validity of certain standard conjectures (as recalled above), Theorem \ref{TheoremIntroduction} therefore gives a new conceptual interpretation of the Mazur-Tate Conjecture and, even better, can be combined with existing results on the conjectures of Birch and Swinnerton-Dyer and Perrin-Riou to obtain concrete theoretical evidence in support of the Mazur-Tate Conjecture for elliptic curves with strictly positive rank. 
  
In this way, for example, we are able to give unconditional evidence in support of the Mazur-Tate Conjecture of the sort described in the next result. Before stating this result we recall that if $r = 1$, then the group 
$I(G)^r/I(G)^{r+1}$ is naturally isomorphic to $G$, and we say that an equality $x=y$ of elements of $G$ is `valid up to an automorphism of $G$' if there exists an automorphism $\alpha$ of $G$ such that $x = \alpha(y)$. 
  
\begin{corollary}\label{cor1} Assume that $E$ and $F/\QQ$ satisfy the conditions (a), (b) and (c) in Theorem \ref{TheoremIntroduction}. Assume also that $L(E,s)$ vanishes to order one at $s=1$ (which implies $r=1$ by Gross, Zagier and 
Kolyvagin), that the conductor of $E$ is square-free and that $E$ has supersingular reduction at each prime divisor of $[F:\QQ]$. 

Then the order of vanishing component of the Mazur-Tate Conjecture for $E$ and $F/\QQ$ is valid and the leading-term component of the Mazur-Tate Conjecture for $E$ and $F/\QQ$ is valid up to an automorphism of $\Gal(F/\QQ)$. 

Further, if the Mazur-Tate Conjecture is not valid for $E$ and $F/\QQ$, then the Birch and Swinnerton-Dyer Conjecture is not valid for $E$ over $\QQ$.  
\end{corollary}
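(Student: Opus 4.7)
The strategy is to apply Theorem \ref{TheoremIntroduction}, whose conditions (a), (b), (c) are among the standing hypotheses of the corollary. Only conditions (d)---BSD for $E$ over $\QQ$---and (e)---the Generalized Perrin-Riou Conjecture at each prime divisor $p$ of $[F:\QQ]$---remain to be addressed, and the task reduces to leveraging what is unconditionally known about each in the supersingular analytic rank one setting.

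The third claim is the contrapositive of the implication ``BSD implies MT''. Granting (d), hypothesis (e) follows, at each $p \mid [F:\QQ]$, from the cyclotomic main conjecture for $E$ at $p$ in the supersingular case, which has been established by the combined work of Kato, Kobayashi, Pollack--Rubin, Skinner--Urban and Wan; this is exactly the sense in which, by \cite[Th.\ 7.3]{bks4}, the main conjecture together with BSD implies GPR. Theorem \ref{TheoremIntroduction} then yields the full Mazur-Tate Conjecture, and the third claim follows at once.

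For the first two (unconditional) claims we proceed differently. The order of vanishing component is established independently by Ota \cite{ota} via Kato's Euler system, whose argument does not invoke BSD or GPR. For the leading-term claim, the plan is to re-run the proof of Theorem \ref{TheoremIntroduction} with (e) weakened to ``GPR up to a $\ZZ_p^\times$-multiple''---a form which, given the main conjectures cited above, holds unconditionally in our setting---noting that the $p$-part of the BSD leading-term formula at supersingular $p \mid [F:\QQ]$ is likewise known up to a unit in the rank-one case by Kobayashi and subsequent refinements. The argument then identifies the image of the modular element in $I(G)/I(G)^2 \cong G$ with the predicted value on each $p$-Sylow subgroup $G_p$ of $G$ up to multiplication by a unit in $(\ZZ/p^{v_p(|G|)})^\times$.

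The main obstacle will be to show that this cumulative unit ambiguity is realised by an element of $\Aut(G)$. Since $G_p$ has exponent dividing $p^{v_p(|G|)}$, multiplication by $(\ZZ/p^{v_p(|G|)})^\times$ defines a group automorphism of $G_p$; aggregating over $p \mid [F:\QQ]$ yields an element of $\Aut(G) = \prod_p \Aut(G_p)$. Condition (b) ensures that the integer factors appearing in the BSD leading-term formula through the MT construction---Tamagawa numbers, orders of $E(\QQ)_{\mathrm{tors}}$, and the factors $\#E^{\mathrm{ns}}(\FF_p)$---are units modulo $|G|$, so that they introduce no additional obstruction beyond the stated automorphism.
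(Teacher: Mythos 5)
Your reduction of the first two claims to ``unit ambiguities absorbed into $\Aut(G)$'' is in the right spirit, but the deduction of the third claim has a genuine gap. You argue: granting BSD, hypothesis (e) of Theorem \ref{TheoremIntroduction} follows from the supersingular cyclotomic main conjecture via \cite[Th. 7.3]{bks4}. However, as the paper itself records, that theorem yields the Generalized Perrin-Riou Conjecture only \emph{up to multiplication by an element of $\ZZ_p^\times$}. With Conjecture \ref{GPR20} known only up to a unit, the machinery of Theorem \ref{MainResult} produces the Mazur-Tate equality only up to an automorphism of $G$, so the implication ``BSD $\Rightarrow$ full Mazur-Tate'', whose contrapositive is the third assertion, is not established by your argument. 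Removing this unit ambiguity is precisely the hard analytic input here, and it is exactly why the corollary assumes square-free conductor and supersingular reduction: under these hypotheses the paper invokes B\"uy\"ukboduk's unconditional proof of the \emph{exact} Perrin-Riou conjecture (Remark \ref{GPR r=1 rem} together with \cite[Th. 2.4(iv)]{buyuk perrin}), which gives Conjecture \ref{GPR20} on the nose rather than up to a unit. Your proposal never uses this result, and without it (or one of the other exact Perrin-Riou theorems mentioned in the introduction) the third claim does not follow.

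For comparison, the paper's proof in \S\ref{deduction cor1 section} runs all three claims through a single computation: by \cite{JSW} the ratio $u:=\mathcal{L}^{\rm an}_S/\mathcal{L}^{\rm alg}_S$ is a nonzero rational coprime to $\#G$, and the exact Conjecture \ref{GPR20} fed into the commutative diagram of Proposition \ref{main0} gives $\fz_\QQ=u\cdot\eta^{\rm alg}$ unconditionally; the argument of \S\ref{completion section} then yields $\theta_{F,S}=u\cdot(-1)^{\nu(m)}\mathcal{L}^{\rm alg}_S\cdot R_F$ in $I^r/I^{r+1}$, from which the order-of-vanishing claim, the leading-term claim up to the automorphism ``multiplication by $u$'', and the equivalence ``Mazur-Tate fails $\Rightarrow u\neq 1\Rightarrow$ BSD fails'' (Remark \ref{truncate remark}) all drop out at once. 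Two further points: the appeal to Ota for the order of vanishing is unnecessary (and would require checking his hypotheses), since $\theta_{F,S}\in I^r$ already follows unconditionally under Hypothesis \ref{hyp1} from Corollary \ref{corkato} and Proposition \ref{comm2}(i); and your claim that ``GPR up to a $\ZZ_p^\times$-multiple holds unconditionally given the main conjectures'' is not self-contained, because \cite[Th. 7.3]{bks4} also requires a BSD input, and replacing it by a $p$-part-of-BSD statement is a plausible but unverified modification of a theorem whose proof you have not examined -- the paper instead obtains the needed $p$-adic unit statement directly from \cite{JSW}.
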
   

This result gives the first theoretical (and unconditional) evidence in support of the leading term component of the Mazur-Tate Conjecture for any elliptic curve of positive rank. Its proof will be given in \S\ref{deduction cor1 section} and combines calculations that are used in the proof of Theorem \ref{TheoremIntroduction} with recent results of B\"uy\"ukboduk \cite{buyuk perrin} on Perrin-Riou's conjecture and of Jetchev, Skinner, and Wan \cite{JSW} on the Birch and Swinnerton-Dyer Conjecture. In fact, for prime divisors $p$ of $[F:\QQ]$ at which $E$ has ordinary reduction, our approach can also be similarly combined with the recently announced results of Bertolini and Darmon \cite{BD}, and of B\"uy\"ukboduk, Pollack and Sasaki \cite{BPS}, concerning Perrin-Riou's conjecture to obtain further theoretical evidence in support of the Mazur-Tate Conjecture. 

It is interesting to note that, whilst the Mazur-Tate Conjecture predicts an equality in the group $I(G)^r/I(G)^{r+1}$, the corresponding case of the Generalized Perrin-Riou Conjecture predicts an equality in $E(\QQ)\otimes I(G)^{r-1}/I(G)^r$ and so is in natural sense `finer'. Another interesting feature of Theorem \ref{TheoremIntroduction} is that, for a given curve $E$, the Generalized Perrin-Riou Conjecture over the cyclotomic $\ZZ_p$-extension $\QQ_\infty$ of $\QQ$ can be used to prove the $p$-primary component of the Mazur-Tate Conjectures relative to fields $F$ that are disjoint from $\QQ_\infty$.

To prove Theorem \ref{TheoremIntroduction} we shall in fact first formulate for each prime $p$ an `algebraic' analogue of the Generalized Perrin-Riou Conjecture for $E$ relative to the cyclotomic $\ZZ_p$-extension of $\QQ$ (see Conjecture \ref{GPR2}). This conjecture is we feel of some independent interest and, in particular, can be seen to be equivalent to the relevant case of the Generalized Perrin-Riou Conjecture  precisely when $E$ validates the Birch and Swinnerton-Dyer Conjecture over $\QQ$. 
Theorem \ref{TheoremIntroduction} thereby follows directly from Theorem \ref{MainResult} which asserts that, under the stated technical hypotheses, Conjecture \ref{GPR2} implies the $p$-primary component of the Mazur-Tate Conjecture for $E$ relative to $F/\QQ$.  
%To prove Theorem \ref{TheoremIntroduction} it is therefore enough to show that, under the stated technical hypotheses, Conjecture \ref{GPR2} itself implies the $p$-primary component of the Mazur-Tate Conjecture for $E$ relative to $F/\QQ$. 

%The latter implication is stated precisely as Theorem \ref{MainResult} and 
Our main task is therefore to prove Theorem \ref{MainResult} and the argument we use for this has several key steps (that are split across \S\ref{state} and \S\ref{sec pf}). Firstly, we shall apply the theory of equivariant Kolyvagin and Stark systems (as developed by Sakamoto and the first and third authors in \cite{bss}, and already used in this setting by Kataoka in \cite{kataoka}) to Kato's zeta elements at $p$ in order to study certain canonical $p$-adic `determinantal zeta elements' (see Definition \ref{det zeta def}). Then we shall show Conjecture \ref{GPR2} implies that the $p$-adic determinantal zeta element associated to $\QQ$ is equal to a $p$-adic `algebraic Birch and Swinnerton-Dyer element' that arises in the approach of \cite{bks4} (see Proposition \ref{main0}). Finally, we shall show, by explicit computation, that the latter equality implies the validity of the $p$-primary component of the Mazur-Tate Conjecture (see Theorem \ref{main}). We remark that this last computation relies on the precise relation between modular elements and Kato's zeta elements (as previously discussed by the second author in \cite{kuriharass}, by Otsuki in \cite{otsuki} and by Ota in \cite{ota}), on an explicit description of the relevant Bloch-Kato Selmer complexes and on a Galois-cohomological interpretation of the biextension-pairing of Mazur and Tate in terms of Bockstein homomorphisms associated to Bloch-Kato Selmer complexes that is proved by Macias-Castllo and the first author in \cite{bmc} (and relies on earlier cohomological calculations of Tan and of Bertolini and Darmon).   

Finally, we note that, whilst the hypothesis that $m$ is square-free (in Theorem \ref{TheoremIntroduction}) is equivalent to the condition that $F/\QQ$ is tamely ramified, we believe that our general approach should also work in the setting of arbitrary real abelian fields $F$.

\subsection{General notation}

For the reader's convenience we collect together some notations and conventions that will be used in the sequel (and are, in  general, consistent with those used in \cite{bks4}). 

We write $\overline \QQ$ for the algebraic closure of $\QQ$ in $\CC$ and regard any algebraic extension of $\QQ$ as a subfield of $\overline \QQ$. We set $G_\QQ:=\Gal(\overline \QQ/\QQ)$. For each natural number $n$ we also set 
\[ \zeta_n := e^{2\pi i/n} \in \overline{\QQ}.\]

%For a positive integer $m$, let $\mu_m \subset \overline \QQ$ denote the group of $m$-th roots of unity. 

For an abelian group $M$, we write $M_{\rm tors}$ for its torsion subgroup and $M_{\rm tf}$ for the quotient of $M$ by $M_{\rm tors}$ (which we regard as a subgroup of $\QQ\otimes_\ZZ M$ in the natural way). For a prime number $p$, we denote by $M[p]$ and $M[p^\infty]$ the subgroups of $M$ comprising elements that are respectively annihilated by $p$ and by some power of $p$. We also write $\ZZ_{(p)}$ for the localization of $\ZZ$ at $p$ and $\ZZ_p$ and $\QQ_p$ for the ring of $p$-adic integers and field of $p$-adic rationals (so that $\ZZ_{(p)} = \QQ\cap \ZZ_p \subset \QQ_p$). 

If $M$ is a module over a commutative ring $R$, we set
$$M^\ast:=\Hom_R(M,R).$$
If $M$ is a free $R$-module with basis $\{x_1,\ldots,x_r\}$, then the dual basis of $M^\ast$ is denoted by $\{x_1^\ast,\ldots,x_r^\ast\}$. 

For a $\ZZ_p$-module $M$, the Pontryagin dual is defined by 
$$M^\vee:=\Hom_{\ZZ_p}(M,\QQ_p/\ZZ_p).$$

For a finite group $\Gamma$, we set $\widehat \Gamma:= \Hom(\Gamma,\CC^\times) = \Hom(\Gamma,\overline{\QQ}^\times)$. For $\chi \in \widehat \Gamma$, we define a primitive idempotent of $\overline{\QQ}[\Gamma]$ by setting 
$$e_\chi:=\frac{1}{\# \Gamma}\sum_{\sigma \in \Gamma}\chi(\sigma)\sigma^{-1}.$$

%For an elliptic curve $E$ over $\QQ$ we regard the group $E[p] := E(\overline \QQ)[p]$ as a $G_\QQ$-module in the natural way. For a prime number $\ell$, we denote by $E^{\rm ns}(\FF_\ell)$ the group of non-singular $\FF_\ell$-rational points of the reduction of $E$ modulo $\ell$. 
If $E$ has good reduction at a rational prime $\ell$, then we write $E(\FF_\ell)$ in place of 
$E^{\rm ns}(\FF_\ell)$. We also use several standard notations for elliptic curves such as $E_0$, $E_1$ and, for any finite set $S$ of prime numbers, $L_S(E,s)$, $L_S(E,\chi,s)$, etc. 

For a number field $F$ and a prime number $p$, we set
$$F_p:=F\otimes_\QQ \QQ_p \simeq \bigoplus_{\fp \mid p} F_\fp,$$
where in the direct sum $\fp$ runs over all $p$-adic places of $F$. 

We use standard notations for Galois (\'etale) cohomology complexes such as $\rgamma(\cO_{F,S},-)$, $\rgamma_f(F,-)$, etc. 
The notation $E_1(F_p)$ indicates $\bigoplus_{\fp \mid p} E_1(F_\fp)$ and we use similar notation to denote  `semi-local' Bloch-Kato complexes such as $\rgamma_f(F_p,-)$, $\rgamma_{/f}(F_p,-)$, etc.

\section{Review of the Mazur-Tate Conjecture}\label{subsec MazurTate}

%\subsection{Formulation of the Mazur-Tate conjecture} \label{subsec MazurTate}
In this section, we quickly review the statement of the Birch and Swinnerton-Dyer Conjecture for elliptic curves over $\QQ$ and then describe a reformulation of the Mazur-Tate Conjecture that is convenient for our approach. %\cite{MT}. 

\subsection{The Birch and Swinnerton-Dyer Conjecture}\label{bsd sec}

Let $E$ be an elliptic curve over $\QQ$ for which the Tate-Shafarevich group $\sha(E/\QQ)$ of $E$ over $\QQ$ is finite. 

We write $N$ for the conductor of $E$, consider the product of Tamagawa factors 
$${\rm Tam}(E):=\prod_{\ell \mid N}\# (E(\QQ_\ell)/E_0(\QQ_\ell) )$$
and define the `algebraic Birch and Swinnerton-Dyer constant' for $E$ over $\QQ$ to be the rational number
\begin{equation*}\label{bsd cons} \mathcal{L}^{\rm alg} = \mathcal{L}(E)^{\rm alg} := \frac{\# \sha(E/\QQ)\cdot{\rm Tam}(E)}{(\# (E(\QQ)_{\rm tors}))^2}.\end{equation*}

We next fix a minimal Weierstrass model of $E$ over $\ZZ$ and write $\omega$ for the corresponding N\'eron differential. We also fix 
a generator $\gamma$ of $H_1(E(\CC),\ZZ)^+$, write $c_\infty$ for the number of connected components of $E(\RR)$ and define the real period of $E$ by setting 
$$\Omega^+ = \Omega^+_\omega :=c_\infty\left| \int_\gamma \omega \right|.$$
(We note that this period is {\it twice} the period $\Omega_E^+$ defined in \cite{MT}.) We finally write ${\rm Reg}^{\rm NT}$ for the classical N\'{e}ron-Tate regulator of $E$.

We then recall that the Birch and Swinnerton-Dyer Conjecture for $E$ over $\QQ$ predicts that if one defines the `analytic Birch and Swinnerton-Dyer constant' to be the real number  
\[ \mathcal{L}^{\rm an} = \mathcal{L}(E)^{\rm an} := \frac{L^{(r)}(E,1)}{r!\cdot \Omega^+\cdot  {\rm Reg}^{\rm NT}},\]
where we set $r:={\rm rank}(E(\QQ))$ and $L^{(r)}(E,s)$ denotes the $r$-th derivative of $L(E,s)$, then there should be an equality  
\begin{equation}\label{bsd conj} \mathcal{L}^{\rm an} = \mathcal{L}^{\rm alg}.\end{equation}

\begin{remark}\label{truncate remark} The value at $s=1$ of the Euler factor of $E$ at a rational prime $\ell$ is equal to 
$\# E^{\rm ns}(\FF_\ell)/\ell$. Thus, if for any finite set of rational primes $\Sigma$ one defines a real number $\mathcal{L}^{\rm an}_\Sigma = \mathcal{L}_\Sigma(E)^{\rm an}$ just as with $\mathcal{L}^{\rm an}$ except that $L(E,s)$ is replaced by the 
$\Sigma$-truncated Hasse-Weil $L$-function $L_\Sigma(E,s)$, then the conjectural equality 
(\ref{bsd conj}) is valid if and only if $\mathcal{L}^{\rm an}_\Sigma$ is equal to the $\Sigma$-truncated algebraic Birch and Swinnerton-Dyer constant that is defined by setting 
\begin{equation}\label{euler factor} \mathcal{L}^{\rm alg}_\Sigma := \mathcal{L}^{\rm alg}\cdot\prod_{\ell \in \Sigma} 
\frac{\# E^{\rm ns}(\FF_\ell)}{\ell} = \frac{\# \sha(E/\QQ)\cdot{\rm Tam}(E)}{(\# (E(\QQ)_{\rm tors}))^2} \cdot \prod_{\ell \in \Sigma} 
\frac{\# E^{\rm ns}(\FF_\ell)}{\ell} .\end{equation}
\end{remark}

\subsection{Modular elements and the Mazur-Tate Conjecture} We fix a finite real abelian extension $F$ of $\QQ$ of conductor $m$, so that $m$ is the smallest natural number for which $F$ is contained in the maximal real subfield $\QQ(\zeta_m)^+$ of $\QQ(\zeta_m)$. We consider the Galois groups
$$G_m:=\Gal(\QQ(\zeta_m)/\QQ), \ G_m^+:=\Gal(\QQ(\zeta_m)^+/\QQ) \text{ and }G:=\Gal(F/\QQ).$$ 

We fix a prime $p$ and consider the following assumptions on the data $E, F$ and $p$. 

\begin{hypothesis}\label{hyp1}\
\begin{itemize}
\item[(i)] $F/\QQ$ is tamely ramified, i.e., $m$ is square-free. %(In particular, $p \nmid m$.)
\item[(ii)] $E$ has good reduction at every prime divisor of $m$, i.e., $(m,N)=1$.
\item[(iii)] $p$ does not divide $6mN \cdot \# (E(\QQ)_{\rm tors})\cdot  {\rm Tam}(E)\cdot \prod_{\ell \mid pmN} \# E^{\rm ns}(\FF_\ell)$. %(In particular, $p$ is odd and $p\nmid {\rm Tam}(E)$.)
\item[(iv)] The action of $G_\QQ$ on $E[p]$ is surjective.
%\textcolor{red}{I changed (iv) to a slightly stronger assumption. 
%This assumption is equivalent to the condition that the action on $T_{p}E$ is surjective 
%since $p \geq 5$.} 
\end{itemize}
\end{hypothesis}

%\begin{remark}
%We note some consequences of Hypothesis \ref{hyp1}. 
%By (i), we see that $F/\QQ$ is unramified at $p$, since $F/\QQ$ is a $p$-extension. This means that $p \nmid m$. By (iii), we have $p>3$, $E$ has good reduction at $p$, and $E(\FF_p)[p]=0$. 
%\end{remark}

We define a finite set of primes 
$$S:=%S_{\rm ram}(F/\QQ)\cup S_{\rm bad}(E)
\{\ell \mid \text{$F/\QQ$ is ramified at $\ell$ or $E$ has bad reduction at $\ell$}\}
=\{\ell \mid mN\}$$
and note that Hypothesis \ref{hyp1}(iii) implies that $S$ {\it does not} contain $p$. 

We write 
$$\theta_{F,S} = \theta(E)_{F,S} \in \QQ[G]$$
for the `$S$-truncated modular element' that is characterized by the interpolation property  
\begin{eqnarray}\label{char}
\chi(\theta_{F,S})=\tau_m(\chi) \frac{L_{S } (E,\chi^{-1},1)}{\Omega^+} \text{ for every $\chi \in \widehat G$},
\end{eqnarray}
where $\tau_m(\chi)$ is the Gauss sum
$$\tau_m(\chi):=\sum_{\sigma \in G_m} \chi(\sigma)\zeta_m^\sigma.$$
Here we regard $\chi$ as a character of $G_m$ via the natural surjection $G_m \twoheadrightarrow G$. 
Note that this $S$-truncated modular element is slightly different 
from the original element defined in \cite{MT}, 
but the comparison is not difficult. 
One can construct $\theta_{F,S}$ directly from modular symbols, namely from 
the integrals of the corrsponding modular form, but 
we will later give a construction from Kato's Euler system 
(see Proposition \ref{modularkato} below). Since, under Hypothesis \ref{hyp1}, $p$ does not divide $\#(E(\QQ)_{\rm tors})$ one knows that 
$$\theta_{F,S} \in \ZZ_{(p)}[G]$$
and, in the sequel, we use this containment to regard $\theta_{F,S}$ as an element of $\ZZ_p[G]$. 

We write $\epsilon_G$ for the $\ZZ_p$-linear `augmentation' map $\ZZ_p[G]\to \ZZ_p$ that sends each element of $G$ to $1$ and write 
$$I_F:=\ker(\epsilon_G)$$
for the associated augmentation ideal of $\ZZ_p[G]$. We write $I$ for $I_{F}$ when there is no confusion. 
We recall, in particular, that there exists a canonical isomorphism of abelian groups
\begin{eqnarray}\label{aug isom}
G \otimes \ZZ_{p} \simeq I/I^2; \ \sigma \mapsto \sigma -1.
\end{eqnarray}
Let
$$\langle -,- \rangle_m: E(\QQ) \times E_S(\QQ) \to G_m^+$$
be the Mazur-Tate pairing constructed in \cite[Chap. II]{MT}, where
$$E_S(\QQ):=\ker\left(E(\QQ) \to \bigoplus_{\ell \mid m} E(\FF_\ell) \oplus \bigoplus_{\ell \mid N} E(\QQ_\ell)/E_0(\QQ_\ell)\right).$$
Composing this pairing with the natural surjection
$$G_m^+\twoheadrightarrow G \otimes \ZZ_{p} 
\stackrel{(\ref{aug isom})}{\simeq} I/I^2,$$
we obtain the pairing
$$\langle -,- \rangle_F: E(\QQ) \times E_S(\QQ) \to I/I^2.$$
Hypothesis \ref{hyp1}(iii) ensures that $\ZZ_p\otimes_\ZZ E_S(\QQ) =\ZZ_p \otimes_\ZZ E(\QQ)$, so we obtain
\begin{eqnarray}\label{mt pair}
\langle -,- \rangle_F: (\ZZ_p\otimes_\ZZ E(\QQ))\times (\ZZ_p\otimes_\ZZ E(\QQ)) \to I/I^2.
\end{eqnarray}
In \S\ref{section bmc} below we will describe an alternative construction of this pairing in terms of a natural Bockstein homomorphism on Galois cohomology.

We set $r:={\rm rank}(E(\QQ))$ and define the Mazur-Tate regulator
$$R_F = R(E)_F \in I^r/I^{r+1}$$
to be the discriminant of the pairing $\langle -,- \rangle_F$, i.e., 
\begin{equation}\label{MT reg def} R_F:= \det(\langle x_i, x_j \rangle_F)_{1\leq i,j \leq r}\end{equation}
with $\{x_1,\ldots,x_r\}$ a basis of $E(\QQ)_{\rm tf}$.

Finally, let $\nu(m)$ denote the number of prime divisors of $m$. 

We can now recall (the $p$-component of) the Mazur-Tate Conjecture. We note that this statement uses the algebraic Birch and Swinnerton-Dyer constant $\mathcal{L}^{\rm alg}_S$ defined in (\ref{euler factor}). 

\begin{conjecture}[Mazur-Tate Conjecture]\label{mtconj}
We have
$$\theta_{F,S} \in I_F^r$$
and 
$$\theta_{F,S} =(-1)^{\nu(m)} \mathcal{L}^{\rm alg}_S\cdot R_F \text{ in }I_F^r/I_F^{r+1}.$$
\end{conjecture}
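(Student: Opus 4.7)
The plan is to prove the $p$-component of the conjecture separately for each prime $p$ satisfying Hypothesis~\ref{hyp1}, since $\theta_{F,S}$ already lies in $\ZZ_p[G]$ and $I_F^r/I_F^{r+1}$ decomposes accordingly. The argument begins by replacing $\theta_{F,S}$ with a Galois-cohomological avatar: via the anticipated Proposition~\ref{modularkato}, $\theta_{F,S}$ is, up to sign and a factor involving $(-1)^{\nu(m)}$, the image under a natural cyclotomic projection of Kato's zeta element attached to $E$. This places the conjecture inside the framework of Euler, Kolyvagin, and Stark systems. I would then invoke the formalism of equivariant Kolyvagin and Stark systems from \cite{bss} (already exploited in the present context by Kataoka \cite{kataoka}) to package the $(r-1)$st order Darmon derivatives of Kato's zeta element into a canonical \emph{determinantal zeta element} $\eta_{\QQ,p}$ living in an appropriate determinant module. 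The hypotheses that $G_\QQ$ surjects onto $\Aut(E[p])$ and that $p \nmid [F:\QQ]\cdot {\rm Tam}(E)\cdot \#E^{\rm ns}(\FF_\ell)$ ensure the Stark-system machinery applies and that the relevant Selmer modules are $p$-torsion-free, so that passage between Stark systems and determinantal elements loses no information.

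The next step is to use the Generalized Perrin-Riou Conjecture over the cyclotomic $\ZZ_p$-extension of $\QQ$ to identify $\eta_{\QQ,p}$ with a canonical $p$-adic \emph{algebraic Birch--Swinnerton-Dyer element} $z^{\rm alg}_{\QQ,p}$ built directly from the Bloch--Kato Selmer complex $\rgamma_f(\QQ,T_pE)$; this will be the content of the eventual Proposition~\ref{main0}. The assumed BSD equality $\mathcal{L}^{\rm an}=\mathcal{L}^{\rm alg}$ is precisely what is required to transport the analytic side of GPR into a purely algebraic identity. In the $r=0$ case the argument in fact collapses, since by Remark~\ref{truncate remark} the Mazur-Tate Conjecture is then already equivalent to BSD; so all the work lies in $r\geq 1$.

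The final and most delicate step is to compute the image of $z^{\rm alg}_{\QQ,p}$ in $I_F^r/I_F^{r+1}$ and match it with $(-1)^{\nu(m)}\mathcal{L}^{\rm alg}_S \cdot R_F$. Three ingredients have to combine here: (i) the explicit description of $\rgamma_f(\QQ,T_pE)$ in terms of $E(\QQ)\otimes\ZZ_p$, $\sha(E/\QQ)[p^\infty]$ and local Tamagawa factors, which is what produces $\mathcal{L}^{\rm alg}_S$; (ii) the Galois-cohomological interpretation of the Mazur-Tate biextension pairing as a Bockstein homomorphism on Selmer cohomology from \cite{bmc}, which rewrites $\det\langle x_i,x_j\rangle_F$ as a determinantal Bockstein trace evaluated on $z^{\rm alg}_{\QQ,p}$; and (iii) careful tracking of sign and Gauss-sum normalizations across the $\nu(m)$ tamely ramified primes of $F/\QQ$, which is where the factor $(-1)^{\nu(m)}$ must emerge. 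The principal obstacle, I expect, is the coordinated bookkeeping in (ii)--(iii): verifying on the nose that the Bockstein-theoretic formula really recovers the biextension pairing of \cite{MT}, and that all local normalizations — in particular the interaction between the augmentation map, the Gauss sums $\tau_m(\chi)$, the choice of generator $\gamma$ of $H_1(E(\CC),\ZZ)^+$, and the local Euler factors at ramified primes — align exactly to yield the predicted leading coefficient, without any residual unit ambiguity.
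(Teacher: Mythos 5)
Your proposal is a conditional argument---assuming the Generalized Perrin-Riou Conjecture and BSD, exactly as the paper itself does in Theorem \ref{TheoremIntroduction}---and it follows essentially the same route as the paper's proof of Theorems \ref{MainResult} and \ref{main}: expressing $\theta_{F,S}$ through Kato's zeta element (Proposition \ref{modularkato}), packaging the Euler system into a determinantal zeta element via equivariant Stark/Kolyvagin systems, identifying it with the algebraic Birch--Swinnerton-Dyer element $\eta^{\rm alg}$ under the (algebraic) Perrin-Riou conjecture (Proposition \ref{main0}), and then computing its image in $I_F^r/I_F^{r+1}$ using the Bockstein-homomorphism description of the Mazur-Tate pairing from \cite{bmc} (Propositions \ref{comm1} and \ref{comm2}, Lemma \ref{lemalg}). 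The bookkeeping you flag as the delicate point is exactly where the paper resolves it: the sign $(-1)^{\nu(m)}$ comes from ${\rm Tr}_{\QQ(\zeta_m)/\QQ}(\zeta_m)=(-1)^{\nu(m)}$ via the local point $c_F$, and the passage from $\mathcal{L}^{\rm alg}_\Sigma$ to $\mathcal{L}^{\rm alg}_S$ from the Euler factor at $p$.
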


\begin{remark}
 Hypothesis \ref{hyp1}(iii) implies that $p$ does not divide $m\cdot N\cdot\# (E(\QQ)_{\rm tors})$ and hence that the rational number 
 $\mathcal{L}^{\rm alg}_S$ 
 belongs to $\ZZ_{(p)}$. This shows that the right hand side of the `leading term formula' in Conjecture \ref{mtconj} is well-defined. 
\end{remark}

\begin{remark}\label{rem bsd}
When $r=0$, Conjecture \ref{mtconj} is equivalent to the classical leading term formula predicted by Birch and Swinnerton-Dyer. To see this we note that the interpolation formula (\ref{char}) combines with the elementary equality $\tau_m(1)=(-1)^{\nu(m)}$ to imply the image of $\theta_{F,S}$ in $\ZZ_p[G]/I \simeq \ZZ_p$ is equal to $(-1)^{\nu(m)}L_S(E,1)/\Omega^+$ and hence that Conjecture \ref{mtconj} is equivalent in this case to an equality $L_S(E,1)/\Omega^+ = \mathcal{L}^{\rm alg}_S$. This equality coincides precisely with the relevant case of the Birch-Swinnerton-Dyer formula, as recalled in Remark \ref{truncate remark}. \end{remark}

\begin{remark} \label{pextension} It suffices to verify Conjecture \ref{mtconj} after replacing $F$ by its maximal subextension $F^{(p)}$ of $p$-power degree over $\QQ$. This is true since it is enough (following Remark \ref{rem bsd}) to verify Conjecture \ref{mtconj} in the case $r>0$ and, in this case, the natural projection map $\ZZ_{p}[G] \to \ZZ_{p}[\Gal(F^{(p)}/\QQ)]$ sends $\theta_{F,S}$ to $\theta_{F^{(p)},S}$ and also induces an isomorphism of finite groups  
$I_{F}^{r}/I_{F}^{r+1} \simeq I_{F^{(p)}}^{r}/I_{F^{(p)}}^{r+1}$. 
\end{remark}

\begin{remark} \label{RemarkEquivalence}
Under the given hypotheses on $p$, Conjecture \ref{mtconj} is equivalent to the `$p$-primary component' of the original Mazur-Tate conjecture \cite[Conj. 4]{MT}. To check this, it is enough to show Conjecture \ref{mtconj} is equivalent to Darmon's reformulation \cite[Conj. 2.4]{darmoneuler} of the $p$-component of the Mazur-Tate Conjecture and, following Remark \ref{pextension}, we may, and will, assume that $\#G$ is a power of $p$. To proceed we write   
$$\theta_{F,m} \in \QQ[G]$$
for $c(\varphi)^{-1}$ times the projection of the element $\theta_m^{\rm MT}$ of $\QQ[G_m^+]$, where $\theta_m^{\rm MT}$ is defined in \cite[\S 2.3.1]{darmoneuler} by using a modular parameterization $\varphi: X_0(N)\to E$ with Manin constant $c(\varphi)$. We recall (from \cite[Prop. 2.3]{darmoneuler}) that the modular element $\theta_{F,m}$ is characterized by the explicit interpolation formula 
\begin{eqnarray}\label{char2}
\chi(\theta_{F,m})=\tau_m(\chi)\frac{m}{f_\chi} \frac{L_m(E,\chi^{-1},1)}{\Omega^+} \text{ for every }\chi \in \widehat G,
\end{eqnarray}
where $f_\chi$ denotes the conductor of $\chi$ and $L_m(E,\chi,s)$ the $m$-truncated $L$-function. 

Then, for this modular element, \cite[Conj. 2.4]{darmoneuler} predicts the following equality
\begin{eqnarray}\label{dmt}
\theta_{F,m} =  (-1)^{\nu(m)} \# \sha(E/\QQ)\cdot J_S\cdot R_S \,\, \text{ in }I^r/I^{r+1},
\end{eqnarray}
where $J_S$ is the order of the finite group
$$\coker\left(E(\QQ) \to \bigoplus_{\ell \mid m} E(\FF_\ell) \oplus \bigoplus_{\ell \mid N} E(\QQ_\ell)/E_0(\QQ_\ell)\right) $$
and $R_S$ is the discriminant in $I^r/I^{r+1}$ of the pairing
$$\langle -, -\rangle_F: E(\QQ ) \times E_S(\QQ) \to I/I^2$$
(in the sense of \cite[(2.5)]{MT}). In particular, these definitions imply directly that 
\begin{eqnarray}\label{reg}
J_S\cdot R_S = \left( \prod_{\ell \mid m}\# E(\FF_\ell)\right)\frac{{\rm Tam}(E)}{(\# (E(\QQ)_{\rm tors}))^2}\cdot R_F.
\end{eqnarray}

On the other hand, Hypothesis \ref{hyp1}(ii) and (iii) combine to imply that the product ${\rm Eul}_{F/\QQ,N}$ of the $G$-equivariant Euler factors at each prime divisor $\ell$ of $N$ belongs to $\ZZ_{(p)}[G]$. It is also clear that the augmentation map $\epsilon_G$ sends ${\rm Eul}_{F/\QQ,N}$ to $\prod_{\ell \mid N}\bigl(\# E^{\rm ns}(\FF_\ell)/\ell\bigr)$ and hence, since the latter element is a unit of $\ZZ_p$ (as a consequence of Hypothesis \ref{hyp1}(iii)) and $\#G$ is a power of $p$, that ${\rm Eul}_{F/\QQ,N}$ is a unit in 
$\ZZ_{(p)}[G]$. 

In addition, the interpolation formulas (\ref{char}) and (\ref{char2}) together imply an equality 
\begin{eqnarray}\label{eul}
{\rm Eul}_{F/\QQ,N}\cdot \theta_{F,m} = \left(\sum_{\chi \in \widehat G}\frac{m}{f_\chi}e_\chi \right)\theta_{F,S},
\end{eqnarray}
We also note that  
$$\sum_{\chi \in \widehat G}\frac{m}{f_\chi}e_\chi\,\, \text{ belongs to }\,\, \ZZ_{(p)}[G]^\times$$
(by \cite[Prop. 3.1]{bleymodular}) and that $\epsilon_G$ sends this element to $m$. 

Now suppose that (\ref{dmt}) holds. Then in $I^r/I^{r+1}$ one computes that 
\begin{align*}
\theta_{F,S} \stackrel{(\ref{eul})}{=}&\, m^{-1} \left( \prod_{\ell \mid N}\frac{\# E^{\rm ns}(\FF_\ell)}{\ell}\right)\cdot\theta_{F,m} \\
 \stackrel{(\ref{dmt})}{=}&\, (-1)^{\nu(m)}m^{-1} \left( \prod_{\ell \mid N}\frac{\# E^{\rm ns}(\FF_\ell)}{\ell}\right)\cdot \# \sha(E/\QQ)\cdot J_S\cdot R_S \\
\stackrel{(\ref{reg})}{=}&\, (-1)^{\nu(m)}\mathcal{L}^{\rm alg}_S\cdot R_F.
\end{align*}
This shows that the formula predicted in Conjecture \ref{mtconj} is implied by (\ref{dmt}). In addition, since ${\rm Eul}_{F/\QQ,N}$ is a unit in $\ZZ_{(p)}[G]^\times$, the same argument also shows that the equality (\ref{dmt}) is implied by that in Conjecture \ref{mtconj}. %Hence, Conjecture \ref{mtconj} is equivalent to the $p$-part of the original Mazur-Tate conjecture.

 %characterized by
%$$\chi({\rm Eul}_{F/\QQ,N}) L^\ast(E,\chi^{-1},1)=L_N^\ast(E,\chi^{-1},1) \text{ for every }\chi \in \widehat G.$$
\end{remark}

\begin{remark} Assume (following Remark \ref{rem bsd}) that $r > 0$. Then, in this case, the Mazur-Tate Conjecture is valid if and only if its $p$-primary component is valid for every prime $p$ that does not divide $\#(E(\QQ)_{\rm tors})$ (see the discussion in \S\ref{somr section}). Thus, taking account of Remark \ref{RemarkEquivalence}, the study of Conjecture \ref{mtconj} will in principal allow one to verify the Mazur-Tate Conjecture modulo its components at the finitely many primes $p$ that do not satisfy the conditions in Hypothesis \ref{hyp1}(iii) and (iv). This means, in particular, that our methods always neglect the $2$-primary and $3$-primary components of the Mazur-Tate Conjecture (whenever they arise). A further restriction on our approach that appears difficult to remove is that, following Hypothesis \ref{hyp1}(ii), we can only consider the situation in which the respective conductors of the curve $E$ and field $F$ are coprime. \end{remark}

%\begin{remark}\label{portillo remark} If $r >0$, then there is, as yet, no theoretical evidence in support of Conjecture \ref{mtconj}. In addition,  the only numerical evidence in support of the conjecture is for the case $r=1$ (for details of which see, for example, the recent article of Portillo-Bobadilla \cite{portillo}).\end{remark}

%\begin{remark}\label{an MT}
%The conjectural Birch-Swinnerton-Dyer formula predicts
%$$ \frac{L_S^\ast(E,1)}{\Omega^+ R_\infty}=\left(\prod_{\ell \mid mN} \frac{\# E^{\rm ns}(\FF_\ell)}{\ell} \right)\frac{\# \sha(E/\QQ){\rm Tam}(E)}{\# (E(\QQ)_{\rm tors})^2} ,$$
%where $R_\infty$ denotes the N\'eron-Tate regulator. So one can naturally consider the following variant of the Mazur-Tate conjecture:
%\begin{eqnarray}\label{analytic}
%\theta_{F,S}=(-1)^{\nu(m)} \frac{L_S^\ast(E,1)}{\Omega^+ R_\infty} R_F \text{ in }I^r/I^{r+1}
%\end{eqnarray}
%(by assuming that $\frac{L_S^\ast(E,1)}{\Omega^+ R_\infty} \in \ZZ_{(p)}$). We call this the `analytic Mazur-Tate conjecture'. (The original Mazur-Tate conjecture (Conjecture \ref{mtconj}) should then be called the `algebraic Mazur-Tate conjecture'.)
%\end{remark}

\section{Review of the Generalized Perrin-Riou Conjecture}\label{sec GPR}

In this section we review the construction of `Bockstein regulator' elements from \cite{bks4} and then formulate a natural `algebraic' analogue of (a special case of) the Generalized Perrin-Riou Conjecture studied in loc. cit.    

\subsection{The Bockstein regulator} We keep the notations in the previous subsection. %and fix the data $(p,E,F)$ 
%and assume Hypothesis \ref{hyp1} in \S \ref{subsec MazurTate}. 
We denote the $p$-adic Tate module of $E$ by $T$ and set $V:=\QQ_p\otimes_{\ZZ_p} T$. We set
$$\Sigma := S \cup \{ p\} =\{\ell \mid pmN\}.$$
We assume $r :={\rm rank}(E(\QQ))> 0$ in the rest of this section. In this case, the natural localization map $ E(\QQ)\otimes_\ZZ\QQ_{p} \to  E_1(\QQ_{p})\otimes_{\ZZ_p}\QQ_p$ is surjective so that $H^1(\ZZ_\Sigma,V)=H^1_{f}(\QQ,V)$ and hence the Kummer map induces an isomorphism
\begin{equation}\label{kt iso} E(\QQ)\otimes_\ZZ\ZZ_p \simeq H^1(\ZZ_\Sigma,T).\end{equation}
(See \cite[(2.2.1)]{bks4}.) 

Write $D(\ZZ_p)$ for the derived category of $\ZZ_p$-modules and let $F/\QQ$ be a finite abelian extension with Galois group $G$. (For the moment, we do not need to assume Hypothesis \ref{hyp1}.) Then the tautological exact sequence
$$0 \to I_F/I_F^2 \to \ZZ_p[G]/I_F^2 \to \ZZ_p \to 0$$
combines with the canonical projection isomorphism $\rgamma(\cO_{F,\Sigma},T)\otimes^{\DL}_{\ZZ_p[G]} \ZZ_p \simeq \rgamma(\ZZ_{\Sigma},T)$ in $D(\ZZ_p)$ to give a canonical exact triangle in $D(\ZZ_p)$  
\[ \rgamma(\ZZ_{\Sigma},T)\otimes^{\DL}_{\ZZ_p} I_F/I_F^2 
\to \rgamma(\cO_{F,\Sigma},T)\otimes^{\DL}_{\ZZ_p[G]} \ZZ_p[G]/I_F^2 
\to \rgamma(\ZZ_\Sigma,T) \to . 
\]
We consider the composite homomorphism 
\begin{equation}\label{beta}
\beta=\beta_F: E(\QQ)\otimes_\ZZ\ZZ_p \xrightarrow{(\ref{kt iso})} H^1(\ZZ_\Sigma, T) \to 
H^2(\ZZ_\Sigma,T) \otimes_{\ZZ_p} I_F/I_F^2
\end{equation}
where the last arrow denotes $(-1)$-times the connecting homomorphism that is induced by the above exact triangle (cf. \cite[\S 2.3]{bks4}) and then define a homomorphism 
$${\rm Boc}_F: \bigl({\bigwedge}_\ZZ^r E(\QQ)\bigr)\otimes_\ZZ\ZZ_p \to E(\QQ) \otimes_\ZZ {\bigwedge}_{\ZZ_p}^{r-1}H^2(\ZZ_\Sigma,T) \otimes_{\ZZ_p} I_F^{r-1}/I_F^r$$
by
$$x_1\wedge \cdots \wedge x_r \mapsto \sum_{i=1}^r (-1)^{i+1}x_i \otimes \beta(x_1)\wedge \cdots \wedge \beta(x_{i-1}) \wedge \beta(x_{i+1})\wedge \cdots \wedge \beta(x_r).$$

Since $p$ is fixed we write $\QQ_\infty$ for the cyclotomic $\ZZ_p$-extension of $\QQ$ and $\QQ_n$ for each natural number  $n$ for the unique subfield of $\QQ_\infty$ of degree $p^n$ over $\QQ$. We note that the augmentation ideal $I_\infty$ of $\ZZ_{p}[[\Gal(\QQ_{\infty}/\QQ)]]$ identifies with the inverse limit over the ideals $I_{\QQ_n}$ (with respect to the natural  projection maps) and so by applying the above construction with $F$ taken to be each field $\QQ_n$ and then passing to the inverse limit over $n$ we 
%qual to the $n$-th layer $\QQ_n$ of the cyclotomic $\ZZ_{p}$-extension $\QQ_{\infty}$ of $\QQ$ 
 obtain a canonical homomorphism of $\ZZ_p$-modules 
%$$
%{\rm Boc}_{\QQ_{n}}: {\bigwedge}_\ZZ^r E(\QQ) \to E(\QQ) 
%\otimes_\ZZ {\bigwedge}_{\ZZ_p}^{r-1}H^2(\ZZ_\Sigma,T) 
%\otimes_{\ZZ_p} I_{\QQ_{n}}^{r-1}/I_{\QQ_{n}}^r.
%$$
%Taking the limit of the above maps, we have
$$
{\rm Boc}_{\QQ_{\infty}}: \bigl({\bigwedge}_\ZZ^r E(\QQ)\bigr) \otimes_\ZZ \ZZ_{p} \to 
E(\QQ) \otimes_\ZZ {\bigwedge}_{\ZZ_p}^{r-1}H^2(\ZZ_\Sigma,T) 
\otimes_{\ZZ_p} I_{\infty}^{r-1}/I_{\infty}^r.
$$

Next we note that the natural exact sequence
$$0 \to H^2(\ZZ_\Sigma,V)^{\ast} \to H^1(\ZZ_\Sigma,V) \to 
E_1(\QQ_{p}) \otimes_{\ZZ_p} \QQ_{p} \to 0$$
(cf. \cite[(2.2.2)]{bks4}) gives rise to a canonical composite isomorphism of $\QQ_p$-spaces 
\begin{align}\label{can comp ident} \varepsilon_\Sigma: \bigl({\bigwedge}_\ZZ^{r} E(\QQ)\bigr) \otimes_\ZZ\QQ_p \simeq&\, \bigl({\bigwedge}_{\QQ_p}^{r}H^1(\ZZ_\Sigma,V)\bigr) \otimes_{\QQ_p}\bigl(E_1(\QQ_p) \otimes_{\ZZ_p} \QQ_{p}\bigr)^\ast\\
\simeq&\, {\bigwedge}_{\QQ_p}^{r-1}H^2(\ZZ_\Sigma,V)^\ast\notag\\
\simeq&\, \bigl({\bigwedge}_{\ZZ_p}^{r-1}H^2(\ZZ_\Sigma,T)\bigr)^\ast\otimes_{\ZZ_p}\QQ_p \notag\end{align}
in which the first isomorphism is induced by (\ref{kt iso}) and the isomorphism $E_1(\QQ_p) \otimes_{\ZZ_p} \QQ_{p}$ $ \simeq \QQ_p$ induced by the logarithm ${\rm log}_\omega$ associated to the fixed N\'eron differential $\omega$, and the last isomorphism is the obvious identification.   

Any choice of basis element $\bm{x}$ of the $\ZZ$-module ${\bigwedge}_\ZZ^{r} E(\QQ)_{\rm tf}$ therefore gives rise to an  isomorphism of $\QQ_p$-spaces 
\[ \varepsilon_\Sigma(\bm{x}): \bigl({\bigwedge}_{\ZZ_p}^{r-1}H^2(\ZZ_\Sigma,T)\bigr)\otimes_{\ZZ_p}\QQ_p \simeq \QQ_p\]
and hence to a `Bockstein regulator' element  
\begin{eqnarray*} R^{\rm Boc}  := \bigl((1\otimes \varepsilon_\Sigma(\bm{x})\otimes 1)\circ ({\rm Boc}_{\QQ_\infty}\otimes_{\ZZ_p}\QQ_p)\bigr)(\bm{x}\otimes 1) \in E(\QQ) \otimes_{\ZZ} I_{\infty}^{r-1}/I_{\infty}^r \otimes_{\ZZ_{p}} \QQ_{p} \end{eqnarray*}
that is easily seen to be independent of the choice of $\bm{x}$. 

%we write onjcal up to sign as , then we obtain an isomorphism $\QQ_p$-spaces 
%is also in $({\bigwedge}^{r-1}H^2(\ZZ_\Sigma,T))^{{*}}$, and 
%defines 
%a map 
%$$E(\QQ) \otimes_\ZZ {\bigwedge}_{\ZZ_p}^{r-1}H^2(\ZZ_\Sigma,V) 
%\otimes_{\ZZ_p} I_{\infty}^{r-1}/I_{\infty}^r 
%\to
%E(\QQ) \otimes_{\ZZ_p} I_{\infty}^{r-1}/I_{\infty}^r \otimes_{\ZZ_{p}} \QQ_{p}
%=H^1(\ZZ_\Sigma,V) \otimes_{\ZZ_{p}} I_{\infty}^{r-1}/I_{\infty}^r.$$
%We can easily check that ${\rm Boc}_{\QQ_{\infty}, \bm{x}}(\eta^{\rm alg}_\bm{x})$ 
%is independent of $\bm{x}$, and 
%we denote it by ${\rm Boc}_{\QQ_{\infty}}(\eta^{\rm alg})$. 

\subsection{Kato's zeta elements}

In the following, we fix a finite abelian $p$-extension $F$ of $\QQ$ that satisfies Hypothesis \ref{hyp1} (cf. Remark \ref{pextension}). 

We recall that if Kato's zeta element (Euler system)
$$z_F=z_{F,\Sigma} \in H^1(\cO_{F,\Sigma},V)$$
is normalized as in \cite[Def. 6.8]{bss2}, then for every $\chi$ in $\widehat{G}$ one has  
\begin{eqnarray}\label{katoexp}
\sum_{\sigma \in G}\chi(\sigma) \exp_\omega^\ast(\sigma z_F) =\frac{L_\Sigma (E,\chi,1)}{\Omega^+},
\end{eqnarray}
where $\exp_\omega^\ast: H^1(\cO_{F,\Sigma},V) \to H^1_{/f}(F_p,V) \to F_p$ denotes the dual exponential map that is associated to the fixed N\'eron differential $\omega$ (see \cite[Th. 6.6 and 9.7]{katoasterisque}). 

Now Hypothesis \ref{hyp1}(iii) implies that $H^1(\cO_{F,\Sigma},T)$ is torsion-free and hence identifies with a sublattice of  $H^1(\cO_{F,\Sigma},V)$. Further, since $E$ has good reduction at $p$ (by Hypothesis \ref{hyp1}(iii)) and the 
$G_\QQ$-representation $E[p]$ is irreducible (by Hypothesis \ref{hyp1}(iv)) one knows that   
$z_F$ belongs to the lattice $H^1(\cO_{F,\Sigma},T)$ (cf. \cite[Rem. 6.9 and \S6.4]{bss2}). %(We put Hypothesis \ref{hyp1}(iv) only in order to ensure this integrality of Kato's zeta element.)

\subsection{The Generalized Perrin-Riou Conjecture}

In our earlier article \cite[\S4.2]{bks4} we constructed a `wild Kolyvagin 
derivative' element
$$\kappa  \in H^1(\ZZ_\Sigma,T)\otimes_{\ZZ_p} I_{\infty}^{r-1}/I_{\infty}^r \simeq E(\QQ) \otimes_\ZZ I_\infty^{r-1}/I_\infty^r,$$
with the property that for each natural number $n$ one has 
\begin{equation}\label{key kappa} \iota_{\QQ_{n}}(\varrho_n(\kappa))= 
\sum_{\sigma \in \Gamma_n}\sigma(z_{\QQ_{n}}) \otimes \overline{\sigma}^{-1} \end{equation}
%\begin{equation}\label{key kappa} \iota_{\QQ_{n}}(\varrho_n(\kappa))= {\mathcal N}_{\QQ_{n}/\QQ}(z_{\QQ_{n}})
% \sum_{\sigma \in \Gal(\QQ_{n}/\QQ)}\sigma(z_{\QQ_{n}}) \otimes \sigma^{-1}
% \end{equation}
%
Here  $\Gamma_n$ denotes $\Gal(\QQ_{n}/\QQ)$, 
\[ \varrho_n: H^1(\ZZ_\Sigma,T)\otimes_{\ZZ_p} I_{\infty}^{r-1}/I_{\infty}^r\to H^1(\ZZ_\Sigma,T)\otimes_{\ZZ_p} I_{\QQ_n}^{r-1}/I_{\QQ_n}^r\]
is the natural projection map,  
\[ \iota_{\QQ_{n}}:
H^1(\ZZ_\Sigma, T) \otimes_{\ZZ_p} I_{\QQ_{n}}^{r-1}/I_{\QQ_{n}}^{r} \to 
H^1({\mathcal O}_{\QQ_{n},\Sigma}, T) 
\otimes_{\ZZ_{p}} \ZZ_p[\Gamma_n]/I_{\QQ_{n}}^{r}\]
is the homomorphism induced by the restriction map $H^1(\ZZ_\Sigma, T)\to H^1({\mathcal O}_{\QQ_{n},\Sigma}, T)$ and inclusion $I_{\QQ_n}^{r-1}/I_{\QQ_n}^r \to \ZZ_p[\Gamma_n]/I_{\QQ_{n}}^{r}$ and $\overline{\sigma}$ denotes the image of $\sigma$ in $\ZZ_p[\Gamma_n]/I_{\QQ_n}^r$.  

%, and the basis element $\bm{x}$ of 
%$\bigl({\bigwedge}_\ZZ^{r} E(\QQ)\bigr)_{\rm tf}$ fixed in the last section, we set 
%$$\eta_\bm{x} := \mathcal{L}_\Sigma\cdot \bm{x} \in 
%\left({\bigwedge}_{\ZZ}^{r} E(\QQ)\right) \otimes \CC_{p}.$$

The central conjecture of \cite{bks4} predicts an explicit formula for $\kappa$. To state this conjecture we regard the $\Sigma$-truncated analytic Birch and Swinnerton-Dyer constant $\mathcal{L}^{\rm an}_\Sigma = \mathcal{L}_\Sigma(E)^{\rm an}$ defined in Remark \ref{truncate remark} as an element of $\CC_p$ by means of a fixed embedding $\RR \hookrightarrow \CC_p$. 
%, which we called the Generalized Perrin-Riou conjecture. 

\begin{conjecture}[Generalized Perrin-Riou Conjecture for $E$ and $\QQ_{\infty}/\QQ$]\label{GPR20} One has %For each basis %element $\bm{x}$ of $\bigl({\bigwedge}_\ZZ^{r} E(\QQ)\bigr)_{\rm tf}$ one has 
\[\kappa= \mathcal{L}^{\rm an}_\Sigma\cdot R^{\rm Boc} \,\,\text{ and }\,\,R^{\rm Boc}\not= 0 \]
in $E(\QQ) \otimes_{\ZZ} \bigl(I_{\infty}^{r-1}/I_{\infty}^r \otimes_{\ZZ_{p}} \CC_{p}\bigr)$. \end{conjecture}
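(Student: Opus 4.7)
The plan is to reduce the conjecture to the combination of the equivariant Iwasawa main conjecture for Kato's zeta elements over the cyclotomic tower and the Birch--Swinnerton-Dyer conjecture for $E$ over $\QQ$. The key observation is that $\kappa$, through its defining property (\ref{key kappa}) via wild Kolyvagin derivatives, encodes the Iwasawa-theoretic behaviour of the tower of zeta elements $z_{\QQ_n}$ modulo the $r$-th power of the augmentation ideal, whereas $R^{\rm Boc}$ is purely algebraic, built from the Bockstein homomorphism $\beta$ attached to the cyclotomic Selmer complex for $T$ together with the trivialization $\varepsilon_\Sigma$ induced by the N\'eron period and the $p$-adic logarithm associated to $\omega$.

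First I would work Iwasawa-theoretically. The generalized main conjecture, combining Kato's divisibility with the reverse divisibility (known in the ordinary case by Skinner--Urban and in many supersingular cases by subsequent work), identifies a certain characteristic ideal of an Iwasawa cohomology module with the ideal generated by the image of $z_{\QQ_\infty}$ under the Perrin-Riou regulator. Translating this identification through the exact triangle used to define $\beta$ and reducing modulo $I_\infty^r$, one obtains after some careful homological bookkeeping an equality $\kappa = u\cdot R^{\rm Boc}$ for some unit $u\in \ZZ_p^\times$; this is essentially the content of \cite[Th. 7.3]{bks4} as cited in the introduction.

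Next, to pin down $u = \mathcal{L}^{\rm an}_\Sigma$, I would invoke the Birch--Swinnerton-Dyer conjecture for $E$ over $\QQ$. The interpolation formula (\ref{katoexp}), applied to the trivial character of $G$, relates the augmentation of $z_F$ to the value $L_\Sigma(E,1)/\Omega^+$; combined with the BSD equality (\ref{bsd conj}) and the fact that $R^{\rm Boc}$ incorporates the N\'eron--Tate regulator through $\varepsilon_\Sigma$, this forces the unit $u$ to equal $\mathcal{L}^{\rm an}/\mathcal{L}^{\rm alg}$, which is $1$ under BSD. Appropriately reinstating the Euler factors at primes in $\Sigma\setminus\{p\}$ then yields the predicted constant $\mathcal{L}^{\rm an}_\Sigma$.

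The hard part will be the non-vanishing assertion $R^{\rm Boc}\neq 0$. This is equivalent to the non-degeneracy of the cyclotomic Bockstein pairing on $E(\QQ)\otimes_\ZZ\QQ_p$, a statement tightly linked to the non-vanishing of the cyclotomic $p$-adic height pairing on $E(\QQ)$, which is presently known only in special cases. Within the strategy above it would be implied by the equality $\kappa=\mathcal{L}^{\rm an}_\Sigma R^{\rm Boc}$ combined with $\mathcal{L}^{\rm an}_\Sigma\neq 0$ (which is BSD) and the non-vanishing of $\kappa$, the latter a consequence of Rohrlich's theorem on non-vanishing of cyclotomic character twists of $L(E,s)$. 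Promoting $u$ from a unit to the precise value $\mathcal{L}^{\rm an}_\Sigma$ unconditionally appears to require the full strength of the equivariant Tamagawa number conjecture for $(h^1(E)(1),\ZZ[G])$, and this is the principal technical obstacle.
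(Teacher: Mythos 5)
There is a fundamental mismatch here: the statement you were asked about is Conjecture \ref{GPR20}, which the paper does not prove and does not claim to prove. It is the central open hypothesis (together with BSD) under which the main theorems are established, and the paper itself only records that it is equivalent to Perrin-Riou's original conjecture when $r=1$ and that, for general $r$, it follows \emph{up to multiplication by an element of $\ZZ_p^\times$} from BSD together with the generalized Iwasawa main conjecture (this is exactly \cite[Th.\ 7.3]{bks4}), hence essentially from the equivariant Tamagawa number conjecture. Your proposal is, in substance, a re-derivation of that same conditional reduction, and you acknowledge as much in your final paragraph: you need the reverse divisibility in the main conjecture, BSD for $E/\QQ$ to identify the constant, and the full ETNC to promote the unit ambiguity $u\in\ZZ_p^\times$ to the exact value $\mathcal{L}^{\rm an}_\Sigma$. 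None of these inputs is available unconditionally, so what you have written is not a proof of the statement but a sketch of why the statement is plausible — which is precisely the status the paper already assigns to it.

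Beyond the global issue of conditionality, the non-vanishing step is also internally problematic. You propose to deduce $R^{\rm Boc}\neq 0$ from the equality $\kappa=\mathcal{L}^{\rm an}_\Sigma\cdot R^{\rm Boc}$ together with $\mathcal{L}^{\rm an}_\Sigma\neq 0$ and $\kappa\neq 0$; but the equality is part of what is being proved, and $\kappa\neq 0$ does not follow from Rohrlich's theorem in any direct way: Rohrlich controls the values $L(E,\chi,1)$, hence (via the dual exponential and (\ref{katoexp})) the images of the zeta elements $z_{\QQ_n}$, whereas $\kappa$ is an $(r-1)$-st order Darmon-type derivative whose non-triviality is exactly the delicate point when $r>1$. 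The paper's own Remark \ref{GPR non-zero rem} makes clear that $R^{\rm Boc}\neq 0$ is tied to the conjectural non-vanishing of classical cyclotomic $p$-adic regulators (equivalently, non-degeneracy of the $p$-adic height in the ordinary case), which is open. So even granting the main conjecture, BSD and ETNC, your argument for the non-vanishing clause is circular as written, and the statement as a whole remains a conjecture rather than a theorem.
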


%\begin{remark} The validity of Conjecture \ref{GPR20} is independent of the choice of basis element $\bm{x}$ since 
%${\rm Boc}_{\QQ^{\infty},-\bm{x}} = -{\rm Boc}_{\QQ^{\infty},\bm{x}}$ and $\eta_{-\bm{x}} = -\eta_\bm{x}$.
%\end{remark} 

\begin{remark}\label{GPR r=1 rem} If $r = 1$, then 
 ${\rm Boc}_{\QQ_{\infty}}$ is the identity automorphism of $E(\QQ)\otimes_\ZZ \ZZ_p$ and one can check that $R^{\rm Boc} = 
 {\rm log}_\omega(x)\cdot x$ for any choice of a generator $x$ of $E(\QQ)_{\rm tf}$. This fact implies directly that, if $r =1$, then $R^{\rm Boc}\not= 0$ and can also be used to show that, in this case, Conjecture \ref{GPR20} coincides with the conjecture formulated by Perrin-Riou in \cite{PR} (for details of this deduction see \cite[Prop. 4.15, Rem. 4.13 and Rem. 4.10]{bks4}). \end{remark}

\begin{remark}\label{GPR non-zero rem} The Generalized Perrin-Riou Conjecture \cite[Conj. 2.12]{bks4} does not predict non-vanishing of the Bockstein regulator for $E$ relative to every extension $F/\QQ$. However, the Bockstein regulator for $E$ relative to $\QQ_\infty/\QQ$ can be explicitly described in terms of classical $p$-adic regulators that have been  conjectured not to vanish (see \cite[Th. 5.6 and 5.11]{bks4}) and this accounts for the prediction $R^{\rm Boc}\not= 0$ in Conjecture \ref{GPR20}. \end{remark} 

To state an `algebraic' analogue of Conjecture \ref{GPR20} we use the $\Sigma$-truncated algebraic Birch and Swinnerton-Dyer constant 
$\mathcal{L}_\Sigma^{\rm alg}$ 
defined in (\ref{euler factor}).    

%$$\eta_\bm{x}^{\rm alg}= c \cdot \varepsilon_\Sigma \cdot \bm{x} \in 
%\bigl({\bigwedge}_{\ZZ}^{r} E(\QQ)\bigr) \otimes \ZZ_{p}$$

\begin{conjecture}[Algebraic Generalized Perrin-Riou Conjecture for $E$ and $\QQ_\infty/\QQ$] \label{GPR2} One has %For each basis element 
%$\bm{x}$ of $\bigl({\bigwedge}_\ZZ^{r} E(\QQ)\bigr)_{\rm tf}$ one has  
$$\kappa=  \mathcal{L}^{\rm alg}_\Sigma\cdot R^{\rm Boc} \,\,\text{ and }\,\,R^{\rm Boc}\not= 0$$
in $E(\QQ) \otimes_{\ZZ} \bigl(I_{\infty}^{r-1}/I_{\infty}^r \otimes_{\ZZ_{p}} \QQ_{p}\bigr)$.
\end{conjecture}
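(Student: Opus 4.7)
The plan is to prove this conjecture by reducing it to Conjecture \ref{GPR20}, which the introduction already recalls can be deduced (up to a $\ZZ_p^\times$-multiple) from the relevant case of the ETNC for $E$, and in particular from the combination of the Birch and Swinnerton-Dyer Conjecture with the generalized Iwasawa main conjecture.

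The key observation I would start from is that the only formal difference between Conjectures \ref{GPR2} and \ref{GPR20} is the scalar prefactor of $R^{\rm Boc}$: the analytic constant $\mathcal{L}^{\rm an}_\Sigma$ appearing in the latter is replaced by the algebraic constant $\mathcal{L}^{\rm alg}_\Sigma$ in the former. Since $\kappa$ and $R^{\rm Boc}$ are literally the same elements in both statements, and since the non-vanishing assertion $R^{\rm Boc}\neq 0$ is common to both, one immediately obtains under this non-vanishing the logical equivalence
\[
\text{Conjecture \ref{GPR2}} \iff \bigl(\text{Conjecture \ref{GPR20}}\ \text{and}\ \mathcal{L}^{\rm an}_\Sigma = \mathcal{L}^{\rm alg}_\Sigma\bigr).
\]
By Remark \ref{truncate remark}, the equality $\mathcal{L}^{\rm an}_\Sigma = \mathcal{L}^{\rm alg}_\Sigma$ is exactly the leading-term Birch and Swinnerton-Dyer formula (\ref{bsd conj}) for $E$ over $\QQ$. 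In particular, under BSD the two conjectures are identical statements.

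Consequently, I would split the verification of Conjecture \ref{GPR2} into two independent tasks. The first is to establish (or assume) the BSD formula for $E/\QQ$; the second is to invoke one of the available proofs of Conjecture \ref{GPR20}, for instance in the supersingular analytic rank-one setting treated by B\"uy\"ukboduk \cite{buyuk perrin}, or in the ordinary setting via the forthcoming work of Bertolini--Darmon \cite{BD} and B\"uy\"ukboduk--Pollack--Sasaki \cite{BPS}. The separate non-vanishing claim $R^{\rm Boc}\neq 0$ is, following Remark \ref{GPR non-zero rem}, known to reduce via \cite[Th.~5.6 and 5.11]{bks4} to the non-vanishing of classical $p$-adic regulators, and so can be handled uniformly alongside GPR20.

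The main obstacle in this program is therefore nothing new to Conjecture \ref{GPR2} itself: in arbitrary rank, Conjecture \ref{GPR20} is essentially equivalent to the $p$-part of the ETNC for $E$, and no unconditional proof is currently available beyond the special cases cited above. The reduction of Conjecture \ref{GPR2} to the conjunction of Conjecture \ref{GPR20} and the BSD formula is, by contrast, purely formal and rests only on the tautological relation between $\mathcal{L}^{\rm an}_\Sigma$ and $\mathcal{L}^{\rm alg}_\Sigma$ recorded in Remark \ref{truncate remark}.
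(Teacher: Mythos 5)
You were asked to prove Conjecture \ref{GPR2}, but the paper contains no proof of this statement: it is introduced precisely as an open ``algebraic'' analogue of Conjecture \ref{GPR20}, and the paper's theorems only establish what follows \emph{from} it (Theorem \ref{MainResult}, and via that Theorem \ref{TheoremIntroduction}). Your proposal does not prove it either. What it actually establishes is a reduction of Conjecture \ref{GPR2} to Conjecture \ref{GPR20} together with the Birch and Swinnerton-Dyer formula, and this is exactly the content of Lemma \ref{equivalence}, which (as you say) is tautological given Remark \ref{truncate remark}. Since both inputs --- BSD for $E/\QQ$ and Conjecture \ref{GPR20}, the latter being essentially as deep as the $p$-part of the equivariant Tamagawa number conjecture --- are open in general, and the partial results you cite (\cite{buyuk perrin}, \cite{BD}, \cite{BPS}) treat only analytic rank one under extra hypotheses and say nothing about BSD, your argument is a conditional reduction rather than a proof. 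This mirrors how the paper itself handles the statement: it is taken as a hypothesis in Theorem \ref{MainResult}, and in Corollary \ref{cor1} the authors deliberately avoid asserting Conjecture \ref{GPR2}; instead they track the discrepancy $u=\mathcal{L}^{\rm an}_S/\mathcal{L}^{\rm alg}_S$, known to be a $p$-adic unit by \cite{JSW}, and obtain the Mazur--Tate formula only up to an automorphism of $G$.

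There is also a genuine logical slip in your displayed equivalence
\[
\text{Conjecture \ref{GPR2}} \iff \bigl(\text{Conjecture \ref{GPR20}}\ \text{and}\ \mathcal{L}^{\rm an}_\Sigma=\mathcal{L}^{\rm alg}_\Sigma\bigr),
\]
which is only valid from right to left. Conjecture \ref{GPR2} makes no reference to $L$-values, so it cannot by itself imply $\mathcal{L}^{\rm an}_\Sigma=\mathcal{L}^{\rm alg}_\Sigma$, nor therefore Conjecture \ref{GPR20}; to pass in that direction one would need precisely the analytic input relating $\kappa$ to $L^{(r)}(E,1)$, i.e.\ Conjecture \ref{GPR20} itself. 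The correct formal statements are: BSD implies the two conjectures are equivalent (Lemma \ref{equivalence}); given $R^{\rm Boc}\neq 0$, the conjunction of the two conjectures implies BSD; but neither conjecture alone implies the other.
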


The next result follows directly from the explicit interpretation of the Birch and Swinnerton-Dyer Conjecture described in Remark \ref{truncate remark}. 

\begin{lemma}\label{equivalence} If $E$ validates the Birch and Swinnerton-Dyer Conjecture over $\QQ$, then Conjecture \ref{GPR20} is equivalent to Conjecture \ref{GPR2}.\end{lemma}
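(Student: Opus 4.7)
The plan is short because the two conjectures have identical form: each asserts an equality $\kappa = c\cdot R^{\rm Boc}$ together with the non-vanishing statement $R^{\rm Boc}\neq 0$. They differ only in the choice of scalar $c$ (the analytic constant $\mathcal{L}^{\rm an}_\Sigma$ in Conjecture \ref{GPR20} versus the algebraic constant $\mathcal{L}^{\rm alg}_\Sigma$ in Conjecture \ref{GPR2}) and, correspondingly, in the coefficient ring of the ambient module ($\CC_p$ versus $\QQ_p$, via the fixed embedding $\RR\hookrightarrow\CC_p$).

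First I would invoke Remark \ref{truncate remark}, which records that the Birch and Swinnerton-Dyer Conjecture for $E$ over $\QQ$ is equivalent to the equality of real numbers $\mathcal{L}^{\rm an}_\Sigma=\mathcal{L}^{\rm alg}_\Sigma$. Since $\mathcal{L}^{\rm alg}_\Sigma$ is rational (in fact in $\ZZ_{(p)}$), this equality forces the image of $\mathcal{L}^{\rm an}_\Sigma$ under the fixed embedding $\RR\hookrightarrow\CC_p$ to lie in $\QQ_p\subset\CC_p$ and to coincide there with $\mathcal{L}^{\rm alg}_\Sigma$. Hence, under the BSD hypothesis, the two scalars agree as elements of $\QQ_p\subset\CC_p$.

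Next I would exploit the faithful flatness of $\CC_p$ over $\QQ_p$: the canonical map
\[ E(\QQ)\otimes_{\ZZ}\bigl(I_\infty^{r-1}/I_\infty^r\otimes_{\ZZ_p}\QQ_p\bigr)\longrightarrow E(\QQ)\otimes_{\ZZ}\bigl(I_\infty^{r-1}/I_\infty^r\otimes_{\ZZ_p}\CC_p\bigr) \]
is injective. Both $\kappa$ and $\mathcal{L}^{\rm alg}_\Sigma\cdot R^{\rm Boc}$ already lie in the left-hand module, so the equality in the $\CC_p$-module of Conjecture \ref{GPR20} (rewritten via the identification of scalars above) holds if and only if the equality in the $\QQ_p$-module of Conjecture \ref{GPR2} holds. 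The non-vanishing of $R^{\rm Boc}$ likewise transfers in both directions by the same injectivity. Combining these observations yields the equivalence immediately, so no step presents a real obstacle; the lemma is essentially a formal consequence of Remark \ref{truncate remark} together with flat base change.
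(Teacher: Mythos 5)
Your proposal is correct and follows essentially the same route as the paper, which simply notes that the lemma "follows directly" from Remark \ref{truncate remark} (i.e., BSD over $\QQ$ is equivalent to $\mathcal{L}^{\rm an}_\Sigma=\mathcal{L}^{\rm alg}_\Sigma$), so that the two conjectures assert the same equality of elements. Your additional observation that both sides already lie in the $\QQ_p$-submodule and that the map to the $\CC_p$-module is injective is exactly the (routine) detail the paper leaves implicit.
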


\section{The main result}\label{state}

\subsection{Statement of the main result and the deduction of Theorem \ref{TheoremIntroduction}}\label{somr section}

The main result that we shall prove in the remainder of this article is the following. 

\begin{theorem}\label{MainResult}
Assume that the data $E, F$ and $p$ satisfies Hypothesis \ref{hyp1}. Assume also that $r:={\rm rank}(E(\QQ)) > 0$ and that the Algebraic Generalized Perrin-Riou Conjecture (Conjecture \ref{GPR2}) is valid. Then the Mazur-Tate Conjecture (Conjecture \ref{mtconj}) is valid. 
\end{theorem}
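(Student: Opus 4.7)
My plan mirrors the strategy sketched in the introduction: I would first promote Conjecture~\ref{GPR2} into an equality of $p$-adic ``determinantal zeta elements'', and then descend this equality from the cyclotomic tower to finite level, where it can be identified with the two sides of the Mazur-Tate formula via the Bockstein-theoretic description of the Mazur-Tate pairing established in \cite{bmc}. The broad three-stage structure is as set out in the introduction (in Proposition~\ref{main0} and Theorem~\ref{main}): (i) package Kato's zeta element at $\QQ$ into a determinantal zeta element $\eta_{\QQ}$; (ii) use the Iwasawa-tower compatibility~(\ref{key kappa}) together with Conjecture~\ref{GPR2} to prove an identity of the form $\eta_{\QQ}=\mathcal{L}_\Sigma^{\rm alg}\cdot \eta_{\QQ}^{\rm BSD}$, where $\eta_{\QQ}^{\rm BSD}$ is an algebraic Birch and Swinnerton-Dyer element built out of $E(\QQ)_{\rm tf}$, a trivialisation of $\sha(E/\QQ)[p^\infty]$, and the Bockstein regulator $R^{\rm Boc}$; and (iii) expand both sides of this identity in the quotient $I_F^r/I_F^{r+1}$.

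For step~(i) I would follow the formalism of equivariant Kolyvagin/Stark systems from \cite{bss} (as applied in our setting by Kataoka in \cite{kataoka}) to attach a canonical $p$-adic determinantal zeta element $\eta_{\QQ}$ to Kato's Euler system. The two structural properties that will drive everything are: (a) $\eta_{\QQ}$ behaves compatibly with Iwasawa descent, so that its reduction modulo $I_\infty^{r}$ is governed by the wild Kolyvagin derivative $\kappa$ via~(\ref{key kappa}); and (b) $\eta_{\QQ}$ specialises, under the dual-exponential map of~(\ref{katoexp}), in a $G$-equivariant way to the modular element $\theta_{F,S}$, reflecting the precise comparison between modular elements and Kato's zeta elements studied in \cite{kuriharass}, \cite{otsuki}, and \cite{ota}. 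Together (a) and (b) reduce the study of $\theta_{F,S}$ modulo $I_F^{r+1}$ to the study of $\eta_{\QQ}$ modulo $I_F^{r+1}$, and convert Conjecture~\ref{GPR2} into the integral identity of step~(ii).

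For step~(iii) I would invoke the Bockstein-theoretic description of the Mazur-Tate pairing from \cite{bmc}, which under Hypothesis~\ref{hyp1} identifies $\langle -,-\rangle_F \bmod I_F^2$ with (a suitable normalisation of) the Bockstein map $\beta_F$ of~(\ref{beta}), and hence identifies the Mazur-Tate regulator $R_F \in I_F^r/I_F^{r+1}$ with a finite-level discriminant of $\beta_F$-values that is the finite-level analogue of the exterior-algebra expression defining $R^{\rm Boc}$. Expanding $\eta_{\QQ} = \mathcal{L}_\Sigma^{\rm alg}\cdot \eta_{\QQ}^{\rm BSD}$ in $I_F^r/I_F^{r+1}$, the left-hand side reduces to $\theta_{F,S}$ via~(\ref{katoexp}) and~(\ref{char}), while the right-hand side reduces to $(-1)^{\nu(m)}\mathcal{L}_S^{\rm alg}\cdot R_F$ once one absorbs the Gauss-sum factor $\tau_m(1)=(-1)^{\nu(m)}$ and the Euler factor at $p$ that converts $\mathcal{L}_\Sigma^{\rm alg}$ into $\mathcal{L}_S^{\rm alg}$ (cf.\ Remark~\ref{truncate remark}). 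The order-of-vanishing statement $\theta_{F,S}\in I_F^r$ comes for free, since by construction $\eta_{\QQ}$ lies at the correct filtration level.

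The step I expect to be hardest, and which does the real work of the paper, is this last comparison: extracting the finite-level $r\times r$ Mazur-Tate determinant $R_F\in I_F^r/I_F^{r+1}$ from the $(r-1)$-fold exterior product of $\beta_{\QQ_\infty}$-values encoded in $R^{\rm Boc}$, while keeping honest control in the filtration quotient. This will require a multilinear-algebra identity relating exterior powers over $\QQ_\infty$ to discriminant pairings over $F$, a delicate tracking of the normalisations introduced by $\omega$, the generator of $H_1(E(\CC),\ZZ)^+$, and the Manin constant, and an explicit computation of the relevant Bloch-Kato Selmer complexes $\rgamma_f(F,T)$; the integrality of every object involved relies in an essential way on Hypothesis~\ref{hyp1}(iii)-(iv). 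Once this comparison is in place, the remainder of the argument is bookkeeping.
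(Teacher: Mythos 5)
Your outline reproduces the architecture already sketched in the introduction (determinantal zeta element, an identity $\fz_\QQ=\eta^{\rm alg}$ forced by Conjecture~\ref{GPR2}, then finite-level descent), but the two places where the actual work happens are either misdirected or missing. First, in your step (ii): the algebraic element is \emph{not} ``built out of $E(\QQ)_{\rm tf}$, a trivialisation of $\sha$ and the Bockstein regulator $R^{\rm Boc}$''; in the paper $\eta^{\rm alg}$ is simply $\mathcal{L}^{\rm alg}_\Sigma\cdot(\bm{x}\otimes\varepsilon_\Sigma(\bm{x}))$, with no $R^{\rm Boc}$ in its construction. The compatibility (\ref{key kappa}) only determines the image of $\fz_\QQ$ under the cyclotomic Bockstein-type map ${\rm Boc}_\infty'$, so to conclude $\fz_\QQ=\eta^{\rm alg}$ you must know that this map is injective; that is exactly where the non-vanishing $R^{\rm Boc}\neq 0$ asserted in Conjecture~\ref{GPR2} is used (Proposition~\ref{main0}), and your plan never isolates this point, leaving the implication ``(\ref{key kappa}) $+$ GPR2 $\Rightarrow$ $\eta_\QQ=\mathcal{L}^{\rm alg}_\Sigma\cdot\eta^{\rm BSD}_\QQ$'' unjustified.

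Second, and more seriously, your step (iii) proposes to extract the $r\times r$ Mazur--Tate discriminant $R_F$ from the $(r-1)$-fold exterior expression defining $R^{\rm Boc}$ by ``a multilinear-algebra identity relating exterior powers over $\QQ_\infty$ to discriminant pairings over $F$''. You flag this as the hardest step but give no mechanism, and in fact no such direct comparison is available: $F$ has conductor $m$ prime to $p$ and can be linearly disjoint from $\QQ_\infty$, so $R^{\rm Boc}$ and $R_F$ live in augmentation quotients of unrelated group rings and there is no identity between them to prove. The paper never compares them: once $\fz_\QQ=\eta^{\rm alg}$ is established, $R^{\rm Boc}$ plays no further role. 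Instead one transports this single equality through the finite-level trivialisation $\varphi_F$ of $\rgamma_f(F,T)$ (acyclicity of $\rgamma_{/f}(F_\ell,T)$ for $\ell\mid mN$, and the local point $c_F\in E_1(F_p)$ whose $\log_\omega$-image is ${\rm Tr}_{\QQ(\zeta_m)/F}(\zeta_m)$), uses Ota's identity $\theta_{F,S}=\sum_{\sigma\in G}(c_F,\sigma z_F)_F\,\sigma^{-1}$ (Proposition~\ref{modularkato}, Corollary~\ref{corkato}) rather than the dual exponential alone, and then applies the descent formalism of \cite{bst} (Proposition~\ref{comm2}), which simultaneously yields the containment $\theta_{F,S}\in I_F^r$ --- this is a property of the map $\pi_{F,f}$, not something that ``comes for free'' from the construction of $\eta_\QQ$ --- and produces the full $r\times r$ Bockstein discriminant ${\rm Reg}_F$, identified with the Mazur--Tate pairing by \cite{bmc}. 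The sign $(-1)^{\nu(m)}$ and the conversion of $\mathcal{L}^{\rm alg}_\Sigma$ into $\mathcal{L}^{\rm alg}_S$ then come from the explicit computation of Lemma~\ref{lemalg} via ${\rm Tr}_{\QQ(\zeta_m)/\QQ}(\zeta_m)=(-1)^{\nu(m)}$, not from evaluating $\tau_m$ at the trivial character of $\theta_{F,S}$, whose augmentation image vanishes when $r>0$. Without this finite-level mechanism your proposal does not close.
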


In the next section we shall describe the basic strategy that will be used to prove this result. However, before doing so, 
we first explain how it implies Theorem \ref{TheoremIntroduction}. 

At the outset we note that, since Theorem \ref{TheoremIntroduction} assumes $E$ validates the Birch and Swinnerton-Dyer Conjecture over $\QQ$, Remark \ref{rem bsd} allows us to assume $r > 0$ and Lemma \ref{equivalence} implies that for every prime $p$ the Generalized Perrin-Riou Conjecture for $E$ relative to the cyclotomic $\ZZ_p$-extension of $\QQ$ is equivalent to Conjecture \ref{GPR2}. We further note that, under these hypotheses, $L_S(E,1)$ vanishes and so the interpolation property (\ref{char}) implies that the modular element $\theta_{F,S}$ belongs to $I(G)\otimes_\ZZ\QQ$.  

To proceed we write $\ZZ'$ for the subring of $\QQ$ generated by the inverse of     
\[ D:= 6\cdot \#(E(\QQ)_{\rm tors}).\]
Then it is known that $\theta_{F,S}$ belongs to $\ZZ'[G]$ and hence to $I(G)\otimes_\ZZ\ZZ'$ and the central conjecture formulated by Mazur and Tate in \cite{MT} further predicts that $\theta_{F,S}$ belongs to $I(G)^r\otimes_\ZZ \ZZ'$ and that  its image in the quotient module $\bigl(I(G)^r/I(G)^{r+1}\bigr)\otimes_\ZZ \ZZ'$ is equal to a precise multiple of the Mazur-Tate regulator. 

Now $\theta_{F,S}$ belongs to the lattice $I(G)^r\otimes_\ZZ \ZZ'$ if and only if it belongs to $I(G)^r\otimes_\ZZ\ZZ_\ell$ for every prime $\ell$ that does not divide $D$. Further, since $r > 0$, the module $I(G)^r/I(G)^{r+1}$ is isomorphic to a quotient of the finite group ${\rm Sym}^r(G)$ and so decomposes as a direct sum 
\begin{equation*}\label{aug iso}  \frac{I(G)^r}{I(G)^{r+1}} \simeq \bigoplus_{\ell \mid \#G} \frac{(I(G)\otimes_\ZZ\ZZ_\ell)^r}{(I(G)\otimes_\ZZ\ZZ_\ell)^{r+1}}\end{equation*}
where $\ell$ runs over all prime divisors of $\#G$. In particular, if $\ell$ does not divide $\#G$, then $I(G)^r\otimes_\ZZ\ZZ_\ell = I(G)\otimes_\ZZ\ZZ_\ell$ and so the containment $\theta_{F,S}\in I(G)^r\otimes_\ZZ\ZZ_\ell$ follows directly from the observations made above.  

These facts combine to imply that it is enough to verify the Mazur-Tate Conjecture after replacing 
 $\ZZ'$ by $\ZZ_p$ for each prime divisor $p$ of $\#G$ that does not divide $D$. 
 
In addition, for any such prime $p$, the hypotheses (a), (b) and (c) in Theorem \ref{TheoremIntroduction} together imply that the data $E, F$ and $p$ verify all of the conditions in Hypothesis \ref{hyp1}. Thus, if the Generalized Perrin-Riou Conjecture for $E$ relative to the cyclotomic 
$\ZZ_p$-extension of $\QQ$ is valid, then the validity of the Mazur-Tate Conjecture after replacing $\ZZ'$ by $\ZZ_p$ follows directly from Theorem \ref{MainResult} (and Lemma \ref{equivalence}).  
 
This completes the proof of Theorem \ref{TheoremIntroduction}.

\subsection{Determinantal zeta elements}
 
In this section, we introduce (in Definition \ref{det zeta def}) a natural notion of `determinantal zeta element' and discuss the key role that such elements will play in the proof of Theorem \ref{MainResult}. In particular, in this way we shall establish an important reduction step in the proof of the latter result. 

We remark at the outset that determinantal zeta elements have implicitly played a key role in several of our earlier articles. For example, the `determinantal zeta element for $\GG_m$' is the central object of study in \cite{bks1} (where it is simply referred to as the `zeta element for $\mathbb{G}_m$') and is also used to formulate and study a natural main conjecture of higher rank Iwasawa theory in \cite{bks2}. In addition, in the respective articles \cite{bks2-2} and \cite{bks4} we studied analogous elements in the determinant of the Galois cohomology of Tate twists $\ZZ_{p}(j)$ (for arbitrary integers $j$) and of the $p$-adic Tate modules of elliptic curves and, more recently, Kataoka \cite{kataoka} has made a systematic study of determinantal zeta elements in the setting of general Galois representations.

To be more precise, we henceforth write $F_{\infty}$ for the cyclotomic $\ZZ_{p}$-extension of $F$. For each subfield $K$ of $F_{\infty}$ we also write $\Lambda_K$ for the algebra $\ZZ_{p}[[\Gal(K/\QQ)]]$ and regard the tensor product 
\[ T_{K}:= T \otimes_{\ZZ_p} \Lambda_K\]
as a module over $\ZZ_p[[G_\QQ]]\otimes_{\ZZ_p}\Lambda_K$ in the natural way. 
%, where $G_\QQ$ acts diagonally on the tensor product and $\Gal(K/\QQ)$ acts via multiplication on the right hand factor. 

Then, as a first step in the proof of Theorem \ref{MainResult}, we shall use the equivariant theory of Kolyvagin and Stark systems in the setting of Kato's zeta elements in order to construct for each such field $K$ a `determinantal zeta element' $\fz_{K}$ in the linear dual of the $\Lambda_K$-determinant of the Galois cohomology of $T_K$ over $\ZZ_\Sigma$ (for details see Definition \ref{det zeta def}).

Next we prove (in Proposition \ref{main0}) that the validity of Conjecture \ref{GPR2} implies $\fz_\QQ$ coincides with an  `algebraic Birch and Swinnerton-Dyer element' 
$\eta^{\rm alg}$ that is defined using the approach of \cite{bks4}.  
 
Using this last observation, the proof of Theorem \ref{MainResult} is reduced to showing that the validity of Conjecture \ref{mtconj} is implied by the equality  $\fz_{\QQ} = \eta^{\rm alg}$ (see Theorem \ref{main}) and this deduction will then be proved in \S\ref{sec pf}.

To proceed we write $\Omega(K)$ for the set of subfields of $K$ that are of finite degree over $\QQ$, regarded as partially ordered by inclusion, and we use the compatibility under norm of Kato's zeta elements to define an element 
$$z_{K} = (z_{M,\Sigma})_{M \in \Omega(K)} 
\in H^1(\ZZ_{\Sigma},T_{K})
 = \varprojlim_{M \in \Omega(K)} H^1(\cO_{M,\Sigma},T).$$

In the sequel we also write ${\det}_{\Lambda_K}(C)$ for the 
$\Lambda_K$-equivariant determinant (in the sense of Knudsen and Mumford) of any perfect complex of $\Lambda_K$-modules $C$ and we set 
\[ {\det}_{\Lambda_K}^{-1}(C) := \Hom_{\Lambda_K}({\det}_{\Lambda_K}(C),\Lambda_K).\] 

Then it is well-known that the definition of $\Sigma$ ensures the Galois cohomology complex $\rgamma(\ZZ_{\Sigma},T_{K})$ is a perfect complex of $\Lambda_K$-modules. In Proposition \ref{comm1} below we will define a canonical homomorphism of $\Lambda_K$-modules 
$$\pi_K: {\det}_{\Lambda_K}^{-1}(\rgamma(\ZZ_{\Sigma},T_{K})) \to H^1(\ZZ_{\Sigma},T_{K}),$$
and by taking the limit of these homomorphisms over all $K$ in $\Omega(F_\infty)$ we thereby obtain a canonical homomorphism of 
$\Lambda_{F_{\infty}}$-modules 
$$\pi_{F_{\infty}}:
{\det}_{\Lambda_{F_{\infty}}}^{-1}(\rgamma(\ZZ_{\Sigma},T_{F_{\infty}})) \to 
H^1(\ZZ_{\Sigma},T_{F_{\infty}}).$$

The following observation regarding the link between the element $z_{F_\infty}$ and map $\pi_{F_\infty}$ relies on the equivariant theory of Euler, Kolyvagin and Stark systems and will play a key role in our approach. 

\begin{lemma}[Kataoka] \label{kataoka lemma}  If Hypothesis \ref{hyp1} is satisfied, then $z_{F_{\infty}}$ belongs to the image of $\pi_{F_{\infty}}$. 
\end{lemma}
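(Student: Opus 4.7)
The plan is to deduce this from the equivariant theory of Kolyvagin and Stark systems developed in \cite{bss} and applied in precisely this setting by Kataoka \cite{kataoka}. First I would verify that Hypothesis \ref{hyp1} ensures the standard running hypotheses of the Stark system machinery for the $\Lambda_{F_\infty}$-module $T_{F_\infty}$. In particular, the surjectivity of $G_\QQ$ on $E[p]$ (condition (iv)), combined with the coprimality assumptions in (iii), should guarantee that the residual representation has the required `big image' property, that the relevant local cohomology invariants behave as expected, and that the complex $\rgamma(\ZZ_\Sigma,T_{F_\infty})$ is perfect of core (Selmer) rank one over $\Lambda_{F_\infty}$.

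Second, I would invoke the main comparison result of \cite{kataoka}: Kato's zeta elements $(z_{M,\Sigma})_{M \in \Omega(F_\infty)}$ form a normalized Euler system for $(T,\Sigma)$, and under the hypotheses verified in the previous step the equivariant Kolyvagin-derivative construction transforms this datum into a canonical element of the free rank-one $\Lambda_{F_\infty}$-module
\[ {\det}^{-1}_{\Lambda_{F_\infty}}\bigl(\rgamma(\ZZ_\Sigma,T_{F_\infty})\bigr). \]
By construction, this class is the determinantal zeta element $\fz_{F_\infty}$ of Definition \ref{det zeta def}.

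Third, I would identify the `evaluation on $H^1$' map built into the Stark system formalism with the canonical map $\pi_{F_\infty}$ constructed in Proposition \ref{comm1}, and then check that under this identification the image of $\fz_{F_\infty}$ is exactly $z_{F_\infty}$. This step reduces to unwinding the definition of $\pi_{F_\infty}$ through the trivialisation of ${\det}_{\Lambda_{F_\infty}}(\rgamma(\ZZ_\Sigma,T_{F_\infty}))$ coming from its cohomology, combined with the norm-compatibility that defines $z_{F_\infty}$ as an inverse limit of Kato's zeta elements along the Iwasawa tower.

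The principal obstacle is not the Stark system construction itself, which is by now well established under these hypotheses, but rather ensuring that the normalisation convention used by Kataoka to produce the element of ${\det}^{-1}$ agrees with the convention behind $\pi_{F_\infty}$, so that the two constructions match on the nose without spurious unit factors. Once this compatibility is in place, the conclusion $z_{F_\infty} \in \mathrm{image}(\pi_{F_\infty})$ is immediate.
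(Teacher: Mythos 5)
Your proposal follows essentially the same route as the paper: both rest on the equivariant Stark/Kolyvagin system machinery of \cite{sbA}, \cite{bss} and \cite{kataoka} --- the isomorphisms ${\det}^{-1}_{\ZZ/p^m[\Gal(F_n/\QQ)]}(\rgamma(\ZZ_\Sigma,T_{F_n}/p^m))\simeq {\rm SS}_1(T_{F_n}/p^m)\simeq {\rm KS}_1(T_{F_n}/p^m)$ together with the fact that Kato's Euler system yields a Kolyvagin system whose first component is $z_{F_n}$ --- combined with the observation that $\pi_{F_\infty}$ coincides by construction with the limit of the resulting composite maps. The only slip is presentational: you should not describe the lifted class as ``the determinantal zeta element of Definition \ref{det zeta def}'', since that element is defined only after (and by means of) this lemma; what your argument actually produces is a preimage of $z_{F_\infty}$ under $\pi_{F_\infty}$, which is exactly what is required.
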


\begin{proof} This has recently been proved in \cite{kataoka} and, for the reader's convenience, we sketch the argument. 

In \cite[Th. 3.12(ii)]{sbA}, the first and the third authors showed that, for each pair of natural numbers $n$ and $m$, there exists a natural isomorphism  of $\ZZ/p^m[\Gal(F_n/\QQ)]$-modules
$$\pi_{n,m}:{\det}_{\ZZ/p^m[\Gal(F_n/\QQ)]}^{-1}(\rgamma(\ZZ_\Sigma, T_{F_n}/p^m)) \xrightarrow{\sim} {\rm SS}_1(T_{F_n}/p^m),$$
where ${\rm SS}_1(-)$ denotes the module of Stark systems of rank one (see also Theorem 5.6 in \cite{kataoka}).

In addition, in \cite[Th. 5.2]{bss}, Sakamoto and the first and the third authors showed 
that there is a natural isomorphism  of $\ZZ/p^m[\Gal(F_n/\QQ)]$-modules
$${\rm Reg}: {\rm SS}_1(T_{F_n}/p^m) \xrightarrow{\sim} {\rm KS}_1(T_{F_n}/p^m),$$
where ${\rm KS}_1(-)$ denotes the module of Kolyvagin systems of rank one. 

Now, it is well-known that one obtains a Kolyvagin system from an Euler system, 
 and that Kato's Euler system yields a Kolyvagin system $ x_{n,m}$ for which one has  
$$(x_{n,m})_1= z_{F_n} \in H^1(\cO_{F_n,\Sigma},T/p^m)=H^1(\ZZ_\Sigma,T_{F_n}/p^m).$$

The required claim is therefore true since the homomorphism $\pi_{F_\infty}$ coincides, by its very construction, with the  inverse limit (over $n$ and $m$) of the composite maps 
\begin{multline*}{\det}_{\ZZ/p^m[\Gal(F_n/\QQ)]}^{-1}(\rgamma(\ZZ_\Sigma, T_{F_n}/p^m)) \xrightarrow{\pi_{n,m}} {\rm SS}_1(T_{F_n}/p^m)\\ \xrightarrow{{\rm Reg}} {\rm KS}_1(T_{F_n}/p^m) \xrightarrow{\kappa \mapsto \kappa_1} H^1(\ZZ_\Sigma,T_{F_n}/p^m).\end{multline*}
\end{proof}

It is clear that the $\Lambda_{F_{\infty}}$-module ${\det}_{\Lambda_{F_{\infty}}}^{-1}(\rgamma(\ZZ_{\Sigma},T_{F_{\infty}}))$ is free of rank one. Thus, since the annihilator of $z_{F_{\infty}}$ in $\Lambda_{F_\infty}$ vanishes (as a consequence of the argument in \cite[\S 6.1]{kataoka}), the result of Lemma \ref{kataoka lemma} implies that $\pi_{F_{\infty}}$ is injective, and hence that there exists a unique element 
$\fz_{F_{\infty}}$ of ${\det}_{\Lambda_{F_\infty}}^{-1}(\rgamma(\ZZ_{\Sigma},T_{F_{\infty}}))$ such that 
$$\pi_{F_{\infty}}(\fz_{F_{\infty}})=z_{F_{\infty}}.$$
%{\color{blue}(We remark that such elements $\fz_{F_{\infty}}$ have already been studied in both \cite{bks4} and \cite{kataoka}).}
 
This observation motivates us to make the following definition. 

\begin{definition}\label{det zeta def} For any subfield $K$ of $F_{\infty}$, the `($p$-adic) determinantal zeta element of $K$' is the image $\fz_{K}$ of $\fz_{F_{\infty}}$ under the 
canonical projection map  
$${\det}_{\Lambda_{F_\infty}}^{-1}(\rgamma(\ZZ_{\Sigma},T_{F_{\infty}})) \twoheadrightarrow 
{\det}_{\Lambda_K}^{-1}
(\rgamma(\ZZ_{\Sigma},T_{K})).$$
\end{definition}

Then the main observation we wish to make in this section is that Conjecture \ref{GPR2} implies an explicit formula for the determinantal zeta element of $\QQ$. 

To state this result we use the canonical `passage to cohomology' isomorphism 
\begin{eqnarray}\label{det bottom}
{\det}_{\ZZ_p}^{-1}(\rgamma(\ZZ_\Sigma,T)) \simeq 
\# H^2(\ZZ_\Sigma,T)_{\rm tors} \cdot {\bigwedge}_{\ZZ_p}^r 
H^1(\ZZ_\Sigma,T) \otimes_{\ZZ_p} {\bigwedge}_{\ZZ_p}^{r-1}H^2(\ZZ_\Sigma,T)_{\rm tf}^\ast. 
\end{eqnarray}

We also recall (from \cite[(2.5.2)]{bks4}) that, if we fix a basis element $\bm{x}$ of 
${\bigwedge}_\ZZ^{r} E(\QQ)_{\rm tf}$ and use the isomorphisms (\ref{kt iso}) and (\ref{can comp ident}), then the `algebraic Birch and Swinnerton-Dyer element'   
\begin{equation}\label{eta alg def}
\eta^{\rm alg} := \mathcal{L}^{\rm alg}_\Sigma\cdot (\bm{x}\otimes \varepsilon_\Sigma(\bm{x}))\end{equation}
belongs to the codomain of the isomorphism (\ref{det bottom}) and is independent of the choice of $\bm{x}$. 

In the sequel we can (and will) therefore use (\ref{det bottom}) to regard $\eta^{\rm alg}$ as an element of the lattice ${\det}_{\ZZ_p}^{-1}(\rgamma(\ZZ_\Sigma,T))$. 

%We first prove the following Proposition. 

\begin{proposition}\label{main0}
 If Conjecture \ref{GPR2} is valid, then $\fz_{\QQ}=\eta^{\rm alg}$.
\end{proposition}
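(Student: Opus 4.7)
The plan is to compare both sides inside the free rank-one $\ZZ_p$-module ${\det}_{\ZZ_p}^{-1}(\rgamma(\ZZ_\Sigma, T))$ by decoding the determinantal zeta element $\fz_\QQ$ through the Iwasawa-theoretic element $\fz_{\QQ_\infty}$, and then substituting the formula for the wild Kolyvagin derivative $\kappa$ predicted by Conjecture \ref{GPR2}. Since $\eta^{\rm alg}$ is by construction $\mathcal{L}^{\rm alg}_\Sigma\cdot (\bm{x}\otimes \varepsilon_\Sigma(\bm{x}))$ under the identification (\ref{det bottom}), it is enough to extract the analogous description of $\fz_\QQ$ and match the coefficient.

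To this end, I would first observe that $\fz_\QQ$ is the image of $\fz_{\QQ_\infty}$ under the projection induced by $\Lambda_{\QQ_\infty}\twoheadrightarrow \ZZ_p$, and that $\fz_{\QQ_\infty}$ is characterized by $\pi_{\QQ_\infty}(\fz_{\QQ_\infty}) = z_{\QQ_\infty}$. The information of $\fz_{\QQ_\infty}$ modulo $I_\infty^r$ is then accessed by iterating the augmentation triangle (the same triangle used in the definition of $\beta$ in (\ref{beta})), passing to determinants, and using the defining identity (\ref{key kappa}) between $z_{\QQ_\infty}$ and $\kappa$. The outcome of this computation should be an identity in $E(\QQ)\otimes_\ZZ \bigl(I_\infty^{r-1}/I_\infty^r\otimes_{\ZZ_p}\QQ_p\bigr)$ of the form
\[ \kappa = c\cdot R^{\rm Boc}, \]
where $c\in\QQ_p$ is the unique scalar for which $\fz_\QQ = c\cdot (\bm{x}\otimes \varepsilon_\Sigma(\bm{x}))$ holds under (\ref{det bottom}). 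This is, in essence, the algebraic content of the higher-rank Iwasawa theory developed in \cite{bks4}: the passage-to-determinants along the $r$-fold iteration of the augmentation triangle produces precisely the Bockstein regulator $R^{\rm Boc}$, paired with the image of $\fz_\QQ$ via the passage to cohomology.

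Given this identity, Conjecture \ref{GPR2} asserts $\kappa = \mathcal{L}^{\rm alg}_\Sigma\cdot R^{\rm Boc}$ with $R^{\rm Boc}\neq 0$, so one immediately reads off $c = \mathcal{L}^{\rm alg}_\Sigma$ and therefore $\fz_\QQ = \mathcal{L}^{\rm alg}_\Sigma\cdot (\bm{x}\otimes \varepsilon_\Sigma(\bm{x})) = \eta^{\rm alg}$. The main obstacle is the intermediate step that produces the formula $\kappa = c\cdot R^{\rm Boc}$: one must track signs, duality conventions, and torsion contributions (notably the factor $\#H^2(\ZZ_\Sigma,T)_{\rm tors}$ appearing in (\ref{det bottom})) through the iterated Bockstein computation in order to match exactly the normalizations fixed in the definitions of $\kappa$, $R^{\rm Boc}$ and $\eta^{\rm alg}$. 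The non-vanishing clause $R^{\rm Boc}\neq 0$ in Conjecture \ref{GPR2} is crucial here, since it is what allows one to upgrade a proportionality statement in the rank-one lattice to an equality.
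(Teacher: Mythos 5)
Your proposal is correct and follows essentially the same route as the paper: the intermediate identity $\kappa = c\cdot R^{\rm Boc}$ (with $c$ the coordinate of $\fz_\QQ$ under (\ref{det bottom})) that you flag as the main obstacle is precisely what the paper extracts from a commutative diagram relating $\pi_{\QQ_n}$, the Darmon norm $\cN_{\QQ_n/\QQ}$ and the Bockstein map ${\rm Boc}'_{\QQ_n}$ (using the injectivity of $\iota_{\QQ_n}$ and the property (\ref{key kappa})), with the commutativity justified by the explicit computation of diagram (7.4.1) in \cite{bks4}. Your final step, reading off $c=\mathcal{L}^{\rm alg}_\Sigma$ from Conjecture \ref{GPR2} together with $R^{\rm Boc}\neq 0$, is the same as the paper's appeal to the resulting injectivity of ${\rm Boc}'_\infty$.
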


\begin{proof} The key point in this argument is that for any real abelian extension $F/\QQ$, with $G = \Gal(F/\QQ)$, the map $\pi_F$ lies in a commutative diagram of the form
$$
\small
\xymatrix{
{\det}_{\ZZ_p[G]}^{-1}(\rgamma(\cO_{F,\Sigma},T)) \ar[r]^{\hskip 0.2truein \pi_F} \ar@{->>}[d]_{\nu_{F}}& H^1(\cO_{F,\Sigma},T) \ar[r]^{\hskip -0.3truein \cN_{F/\QQ}}& H^1(\cO_{F,\Sigma},T)\otimes_{\ZZ_p} \ZZ_p[G]/I_F^r \\
{\det}_{\ZZ_p}^{-1}(\rgamma(\ZZ_\Sigma,T)) \ar[r]_{(\ref{det bottom})\quad\quad\quad\quad}&  {\bigwedge}_{\ZZ_p}^r H^1(\ZZ_\Sigma,T) \otimes_{\ZZ_p} {\bigwedge}_{\ZZ_p}^{r-1}H^2(\ZZ_\Sigma,T)_{\rm tf}^\ast \ar[r]_{\quad\quad\quad {\rm Boc}'_{F}}& H^1(\ZZ_\Sigma,T)\otimes_{\ZZ_p} I_F^{r-1}/I_F^r. \ar[u]_{\iota_F}
}
$$
Here $\nu_{F}$ is the natural projection map, $\cN_{F/\QQ}$ is the `Darmon norm' that sends each element $x$ to the class of $\sum_{\sigma \in G}\sigma(x)\otimes \sigma^{-1}$, $\iota_F$ is the map induced by the restriction map $H^1(\ZZ_\Sigma,T)\to H^1(\cO_{F,\Sigma},T)$ and inclusion $I^{r-1}/I^r \to \ZZ_p[G]/I^r$ and ${\rm Boc}'_{F}$ denotes the composite homomorphism 
\begin{align*} &\,{\bigwedge}_{\ZZ_p}^r H^1(\ZZ_\Sigma,T) \otimes_{\ZZ_p} {\bigwedge}_{\ZZ_p}^{r-1}H^2(\ZZ_\Sigma,T)_{\rm tf}^\ast \\
\xrightarrow{{\rm Boc}_F\otimes 1 }&\,\bigl(H^1(\ZZ_\Sigma,T) \otimes_{\ZZ_p} {\bigwedge}_{\ZZ_p}^{r-1}H^2(\ZZ_\Sigma,T) \otimes_{\ZZ_p} I_F^{r-1}/I_F^r\bigr)\otimes_{\ZZ_p} {\bigwedge}_{\ZZ_p}^{r-1}H^2(\ZZ_\Sigma,T)_{\rm tf}^\ast\\
\to&\, H^1(\ZZ_\Sigma,T) \otimes_{\ZZ_p} I_F^{r-1}/I_F^r\end{align*}
where the last map is induced by the natural isomorphism 
\[ \bigl({\bigwedge}_{\ZZ_p}^{r-1}H^2(\ZZ_\Sigma,T)_{\rm tf}\bigr) \otimes_{\ZZ_p}\bigl({\bigwedge}_{\ZZ_p}^{r-1}H^2(\ZZ_\Sigma,T)_{\rm tf}^\ast\bigr) \simeq \ZZ_p. \]
In addition, the commutativity of the above diagram follows from the same explicit computation that verifies commutativity of the diagram (7.4.1) in \cite{bks4}. 

We also note that the map $\iota_F$ is injective. Indeed, this follows easily from the facts that $H^1(\ZZ_\Sigma,T)$ is $\ZZ_p$-free and that $H^1(\ZZ_\Sigma,T)$ identifies with the submodule $H^1(\cO_{F,\Sigma},T)^G$ of $G$-invariant elements in $H^1(\cO_{F,\Sigma},T)$ (since $H^0(\ZZ_\Sigma,T)$ vanishes).

In addition, a comparison of the definitions of $\eta^{\rm alg}$ and $R^{\rm Boc}$ combines with a direct calculation to show that the homomorphism  
\[ {\rm Boc}_\infty': {\bigwedge}_{\ZZ_p}^r H^1(\ZZ_\Sigma,T) \otimes_{\ZZ_p} {\bigwedge}_{\ZZ_p}^{r-1}H^2(\ZZ_\Sigma,T)_{\rm tf}^\ast \to H^1(\ZZ_\Sigma,T) \otimes_{\ZZ_p} I_\infty^{r-1}/I_\infty^r\]
given by the limit of the maps ${\rm Boc}_{\QQ_n}'$ for $n \ge 1$ sends $\eta^{\rm alg}$ to $\mathcal{L}^{\rm alg}_\Sigma\cdot R^{\rm Boc}$. 

In particular, if Conjecture \ref{GPR2} is valid, then the commutativity of the above diagram (for each field $F = \QQ_n$)  combines with the property (\ref{key kappa}) of the element $\kappa$ to imply that 
\[ {\rm Boc}_\infty'(\fz_\QQ) = \mathcal{L}^{\rm alg}_\Sigma \cdot R^{\rm Boc} = {\rm Boc}_\infty'(\eta^{\rm alg}).\]
Thus, since ${\rm Boc}_\infty'$ is injective (as Conjecture \ref{GPR2} predicts $R^{\rm Boc}\not=0$), one has $\fz_\QQ = \eta^{\rm alg}$, as required. 
\end{proof}

\begin{remark}
The equality $\fz_{\QQ}=\eta^{\rm alg}$ that occurs in Proposition \ref{main0} implies the `refined Mazur-Tate conjecture' formulated in \cite[Conj. 2.19]{bks4} for any abelian $p$-extension $F/\QQ$ of the form considered in loc. cit. This is because if one applies  
%we have a commutative diagram
%$$
%\small
%\xymatrix{
%{\det}_{\ZZ_p[G]}^{-1}(\rgamma(\cO_{F,\Sigma},T)) \ar[r]^{\pi_F} \ar@{->>}[d]_{\nu_{F}}& H^1(\cO_{F,\Sigma},T) \ar[r]^{\cN_{F/\QQ}}& H^1(\cO_{F,\Sigma},T)\otimes_{\ZZ_p} \ZZ_p[G]/I^r \\
%{\det}_{\ZZ_p}^{-1}(\rgamma(\ZZ_\Sigma,T)) \ar[r]_{(\ref{det bottom})\quad\quad\quad\quad}&  {\bigwedge}_{\ZZ_p}^r H^1(\ZZ_\Sigma,T) \otimes_{\ZZ_p} {\bigwedge}_{\ZZ_p}^{r-1}H^2(\ZZ_\Sigma,T)_{\rm tf}^\ast \ar[r]_{\quad\quad\quad\quad {\rm Boc}_{F, \bm{x}}}& H^1(\ZZ_\Sigma,T)\otimes_{\ZZ_p} I^{r-1}/I^r, \ar[u]_{\iota_F}
%}
%$$
%where $\nu_{F}$, $\iota_F$ are the natural homomorphisms, 
%$\cN_{F/\QQ}$ is the Darmon norm (see \S \ref{Intro2} or \cite[\S 2.4.2]{bks4}), 
%and ${\rm Boc}_{F, \bm{x}}$ is defined similarly as above (see \cite[(2.3.3)]{bks4}).
the commutative diagram in the proof of Proposition \ref{main0} to $F$ and 
the element $\fz_{F}$, then one obtains an equality $\cN_{F/\QQ}(z_{F})= \iota_F({\rm Boc}_{F}'(\eta^{\rm alg})),$ which can be shown to agree precisely with the formulation of \cite[Conj. 2.19]{bks4}. 
\end{remark}

In view of Proposition \ref{main0}, it is clear that Theorem \ref{MainResult} is a direct consequence of the following result (that will be proved in the next section).

\begin{theorem}\label{main}
%Assume Hypothesis \ref{hyp1}. 
If $\fz_{\QQ}=\eta^{\rm alg}$, then Conjecture \ref{mtconj} is valid. 
\end{theorem}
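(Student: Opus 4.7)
The plan is to apply the commutative diagram from the proof of Proposition \ref{main0} at the level of $F$ itself (rather than passing to $\QQ_{\infty}$), and then to unpack the resulting cohomological identity into the Mazur-Tate leading-term formula. The compatibility of the projection maps $\nu_K$ under inclusions $K \subseteq K'$ of subfields of $F_\infty$ ensures that $\nu_{F}(\fz_{F})=\fz_{\QQ}$, so the hypothesis $\fz_{\QQ}=\eta^{\rm alg}$ combines with $\pi_F(\fz_F)=z_F$ and the diagram of Proposition \ref{main0} to yield
\[
\cN_{F/\QQ}(z_{F}) \;=\; \iota_{F}\bigl({\rm Boc}_{F}'(\eta^{\rm alg})\bigr)
\]
in $H^{1}(\cO_{F,\Sigma},T)\otimes_{\ZZ_{p}}\ZZ_{p}[G]/I_F^{r}$. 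Since $\iota_F$ factors through $H^1(\ZZ_\Sigma, T) \otimes_{\ZZ_p} I_F^{r-1}/I_F^r$, this identity already forces the Darmon norm of $z_F$ to lie in a rather restrictive subquotient and will ultimately yield the containment $\theta_{F,S} \in I_F^r$.

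Next I would apply the dual exponential map $\exp^{*}_{\omega}$ together with the precise relation between Kato's zeta element and the modular element (the content of Proposition \ref{modularkato}, built on the interpolation property (\ref{katoexp})) to convert the left-hand side into a statement about $\theta_{F,S}$ modulo $I_F^{r+1}$. Under Hypothesis \ref{hyp1}(iii) the Euler factor of $E$ at $p$ is a unit in $\ZZ_{p}[G]$ and so can be absorbed into $\mathcal{L}^{\rm alg}_S$, while the conductor-dependent Gauss sum factor $\tau_{m}(\chi)$ that distinguishes (\ref{char}) from (\ref{katoexp}) contributes the overall sign $\tau_{m}(\mathbf{1})=(-1)^{\nu(m)}$ appearing in Conjecture \ref{mtconj}.

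For the right-hand side, I would unwind the definition of ${\rm Boc}'_{F}$ applied to $\eta^{\rm alg} = \mathcal{L}^{\rm alg}_{\Sigma}\cdot\bm{x}\otimes\varepsilon_{\Sigma}(\bm{x})$. The critical input here is the Galois-cohomological interpretation of the Mazur-Tate biextension pairing proved by Macias-Castillo and the first author in \cite{bmc} (which in turn relies on cohomological computations of Tan and of Bertolini-Darmon): this identifies $\langle -, - \rangle_F$ with the Bockstein homomorphism $\beta_F$ of (\ref{beta}) associated to the relevant Bloch-Kato Selmer complex, so that the image of $\eta^{\rm alg}$ under ${\rm Boc}_F'$ translates directly into the Mazur-Tate regulator $R_F$ up to the normalisation coming from the isomorphism $\varepsilon_\Sigma$. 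Matching the two sides and converting $\mathcal{L}^{\rm alg}_\Sigma$ into $\mathcal{L}^{\rm alg}_S$ via the Euler factor at $p$ then yields the predicted equality $\theta_{F,S}=(-1)^{\nu(m)}\mathcal{L}^{\rm alg}_S \cdot R_F$ in $I_F^r/I_F^{r+1}$.

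The principal obstacle will be the explicit comparison between ${\rm Boc}_F'(\eta^{\rm alg})$ and $R_F$: this requires an explicit description of the relevant Bloch-Kato Selmer complex, together with a careful verification that all sign conventions and normalisations match between the algebraic (Bockstein) and geometric (biextension) descriptions of the pairing. A secondary but non-trivial bookkeeping challenge is the precise relation of $\theta_{F,S}$ to the dual-exponential image of $z_F$, for which one must track both the Gauss sum $\tau_m(\chi)$ and the Euler factors that distinguish $L_S$ from $L_\Sigma$.
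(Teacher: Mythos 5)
Your first step is sound: the compatibility of the projection maps does give $\nu_{F}(\fz_{F})=\fz_{\QQ}$, and applying the commutative diagram from the proof of Proposition \ref{main0} at the level of $F$ does yield $\cN_{F/\QQ}(z_{F})=\iota_{F}({\rm Boc}'_{F}(\eta^{\rm alg}))$ in $H^{1}(\cO_{F,\Sigma},T)\otimes_{\ZZ_{p}}\ZZ_{p}[G]/I_F^{r}$ (this is exactly the remark following Proposition \ref{main0}). The gap is in the conversion step. That identity only holds modulo $I_F^{r}$, and pairing against the local point $c_F$ --- i.e. applying $x\mapsto\sum_{\sigma\in G}(c_F,\sigma x)_F\sigma^{-1}$, which by Proposition \ref{modularkato} is what produces $\theta_{F,S}$ --- turns the left-hand side into $\theta_{F,S}$ modulo $I_F^{r}$ only, not modulo $I_F^{r+1}$. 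Moreover, the right-hand side is annihilated by this pairing: ${\rm Boc}'_{F}(\eta^{\rm alg})$ lies in $H^1(\ZZ_\Sigma,T)\otimes_{\ZZ_p}I_F^{r-1}/I_F^{r}$, and since $r>0$ gives $H^1(\ZZ_\Sigma,T)\simeq E(\QQ)\otimes_\ZZ\ZZ_p$, the restriction of such classes to $F_p$ lands in $H^1_f(F_p,T)$ and therefore pairs to zero with $c_F$ in the $/f$-quotient. So your route produces exactly the containment $\theta_{F,S}\in I_F^{r}$ (the order-of-vanishing part, essentially the content of Ota's work), but the leading-term formula in $I_F^{r}/I_F^{r+1}$, which is the substance of Conjecture \ref{mtconj}, cannot be extracted this way: the Darmon-norm identity modulo $I_F^{r}$ simply does not carry the mod $I_F^{r+1}$ information you need after pairing with $c_F$.

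What is missing is a device that gains one extra power of the augmentation ideal, and this is precisely why the paper does not argue through the Darmon-norm identity. Instead, the point $c_F$ is built into a trivialization $\varphi_F$ of ${\det}^{-1}_{\ZZ_p[G]}(\rgamma_f(F,T))$ from ${\det}^{-1}_{\ZZ_p[G]}(\rgamma(\cO_{F,\Sigma},T))$, so that $\theta_{F,S}=\pi_{F,f}(\varphi_F(\fz_F))$ (Corollary \ref{corkato}); the crucial input is then Proposition \ref{comm2} (an instance of \cite[Th. 3.3.7]{bst}), which asserts both that the image of $\pi_{F,f}$ lies in $I_F^r$ and that its reduction modulo $I_F^{r+1}$ is computed by ${\rm Reg}_F\circ\pi_{\QQ,f}\circ\nu_f$, i.e. by the discriminant of the Bockstein pairing on the Selmer complex. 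Only at that stage does the identification of the Bockstein pairing with the Mazur-Tate pairing from \cite{bmc} enter, and the explicit evaluation of Lemma \ref{lemalg} (where the sign $(-1)^{\nu(m)}$ arises from ${\rm Tr}_{\QQ(\zeta_m)/\QQ}(\zeta_m)$, i.e. from the normalization of $c_F$, rather than from the value $\tau_m(\mathbf{1})$ in the interpolation formula, and where the Euler factor at $p$ converts $\mathcal{L}^{\rm alg}_\Sigma$ into $\mathcal{L}^{\rm alg}_S$) completes the argument. Without an analogue of Proposition \ref{comm2}, your plan stalls at the order-of-vanishing statement.
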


%\begin{remark}\label{rem etnc}
%In the argument of the proof of Theorem \ref{main}, we can actually prove the following: if the equivariant Tamagawa number conjecture for $(h^1(E/F)(1),\ZZ_p[G])$ (as in \cite{BFetnc}) and the full (not only the $p$-part) Birch-Swinnerton-Dyer conjecture for $E/\QQ$ are true, then the Mazur-Tate conjecture is true. See Remark \ref{last} for the details. 
%\end{remark}

\begin{remark} If $r=1$, then one can use the isomorphism (\ref{det bottom}) to regard $\eta^{\rm alg}$ as an element of $H^1(\ZZ_\Sigma,T)$ and hence consider the possibility that  
%Let 
%$$z_{\QQ}:={\rm Cor}_{F/\QQ}(z_F) \in H^1(\ZZ_\Sigma,T). $$
\begin{eqnarray}\label{alg PR}
z_\QQ \stackrel{?}{=}\eta^{\rm alg},
\end{eqnarray}
where $z_{\QQ}=z_{\QQ, \Sigma}$ is Kato's zeta element in $H^1(\ZZ_\Sigma,T)$. This displayed equality is a natural `algebraic' variant of Perrin-Riou's conjecture (as discussed in \cite[Conj. 2.8 and Prop. 2.10]{bks4}) and is easily seen to be equivalent to the equality 
$\fz_{\QQ}=\eta^{\rm alg}$ assumed in Theorem \ref{main}. Hence, if $r=1$, then Theorem \ref{main} implies that the $p$-primary component of the  Mazur-Tate Conjecture is directly implied by the algebraic Perrin-Riou Conjecture given by (\ref{alg PR}). 
\end{remark}

%\begin{remark} \label{remark pr}
%Let $\eta^{\rm BSD} \in \CC_p \otimes_{\ZZ_p}{\det}_{\ZZ_p}^{-1}(\rgamma(\ZZ_\Sigma,T))$ be 
%the `(analytic) Birch-Swinnerton-Dyer element' defined 
%by using $\eta_{\bm{x}}^{\rm BSD}$ in \cite[Def. 2.4]{bks4}. 
%The original Perrin-Riou's conjecture is nothing but 
%$$z_\QQ=\eta^{\rm BSD}$$
%in the case $r=1$ (see \cite[Rem. 2.18]{bks4}).
%Therefore, we get from Theorem \ref{main} that Perrin-Riou's conjecture implies 
%the analytic Mazur-Tate conjecture that is formulated by replacing $c(E)$ by 
%$c(E)^{\rm an}$ as we mentioned in Corollary \ref{cor1} in Introduction. 
%
%We also note that the Birch-Swinnerton-Dyer formula for $E$ holds 
%if and only if $\eta^{\rm alg}=\eta^{\rm BSD}$ (see \cite[Rem. 2.18]{bks4}). 
%
%In this way our theorem shows that {\it Perrin-Riou's conjecture over $\QQ$ together with the original Birch and 
%Swinnerton-Dyer formula over $\QQ$ implies the Mazur-Tate conjecture for $F$ in the rank $1$ case}. 
%This observation has not yet been made before our work.  
%
%We also note that a proof of the conjecture by Perrin-Riou has recently been announced by 
%Bertolini-Darmon \cite{BD} and independently by B\"uy\"ukboduk-Pollack-Sasaki \cite{BPS}. 
%Combining their result and the result of Jetchev-Skinner-Wan \cite{JSW} 
%on the Birch-Swinnerton-Dyer formula, one can deduce from Corollary \ref{cor pr} 
%unconditional results on the Mazur-Tate conjecture in the case of analytic rank one. 
%\end{remark}

\section{The proof of Theorem \ref{main} and the deduction of Corollary \ref{cor1}}\label{sec pf}

\subsection{Construction of the modular element}\label{constr}

We first review the construction of the modular element $\theta_{F,S}$ from 
Kato's zeta element $z_F$ (see \cite{kuriharass} by the second author, \cite{otsuki} by Otsuki 
and especially \cite{ota} by Ota). 

As is shown in \cite[Lem. 5.5]{ota}, we have an isomorphism
\begin{eqnarray}\label{formal}
E_1(F_p) \xrightarrow{\sim} \cO_F\otimes_{\ZZ}\ZZ_p
\end{eqnarray}
defined by
$$c \mapsto \left(1-\frac{a_p}{p}{\rm Fr}_p +\frac 1p {\rm Fr}_p^2 \right)\log_\omega(c),$$
where $a_p:=p+1-\# E(\FF_p)$, ${\rm Fr}_p \in G$ is the arithmetic Frobenius at $p$, and $\log_\omega$ is the formal logarithm $E_1(F_p) \to p \cO_F \otimes_\ZZ \ZZ_p$ associated to $\omega$. 
(The assumptions in loc. cit. are satisfied, since Hypothesis \ref{hyp1} ensures that $p$ is unramified in $F$ and $p \nmid 6N\# E(\FF_p)$.) 
We define 
$$c_F \in E_1(F_p)$$
to be the element corresponding to 
$${\rm Tr}_{\QQ(\zeta_m)/F}(\zeta_m) \in \cO_F$$
under the isomorphism (\ref{formal}). 
We regard $c_F$ as an element of $H^1_f(F_p,T)$ by means of the Kummer map $E_1(F_p) \to H^1_f(F_p,T)$.

We write 
\begin{equation*}\label{cup}
(-,-)_F: H^1_f(F_p,T) \times H^1_{/f}(F_p,T) \to H^2(F_p,\ZZ_p(1)) \simeq \bigoplus_{\fp \mid p}\ZZ_p \xrightarrow{(a_\fp)_\fp \mapsto \sum_\fp a_\fp} \ZZ_p\end{equation*}
%\begin{eqnarray*}\label{cup}
%(-,-)_F: H^1_f(F_p,T) \times H^1_{/f}(F_p,T) &\to& H^2(F_p,\ZZ_p(1)) \\
%&\simeq& \bigoplus_{\fp \mid p}\ZZ_p \xrightarrow{(a_\fp)_\fp \mapsto \sum_\fp a_\fp} \ZZ_p\nonumber
%\end{eqnarray*}
for the pairing induced by the cup product (after identifying $T$ with $T^\ast(1)$ by means of the Weil pairing). By composing it with the localization map $H^1(\cO_{F,\Sigma},T) \to H^1_{/f}(F_p,T)$, we obtain a pairing 
$$(-,-)_F: H^1_f(F_p,T) \times H^1(\cO_{F,\Sigma},T) \to \ZZ_p, $$
which we denote by the same symbol. 

\begin{proposition}\label{modularkato} In $\ZZ_p[G]$ one has 
$$\theta_{F,S}=\sum_{\sigma \in G}(c_F, \sigma z_F)_F \sigma^{-1}.$$
\end{proposition}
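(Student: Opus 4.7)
Since the ring embedding $\ZZ_p[G] \hookrightarrow \prod_{\chi \in \widehat G}\overline{\QQ}_p$ given by $x \mapsto (\chi(x))_\chi$ is injective and $\theta_{F,S}$ is characterized by the interpolation property (\ref{char}), the plan is to verify that the element $\theta' := \sum_{\sigma \in G}(c_F, \sigma z_F)_F \sigma^{-1}$ satisfies $\chi(\theta') = \tau_m(\chi)L_S(E,\chi^{-1},1)/\Omega^+$ for every $\chi \in \widehat G$.

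First, the $G$-invariance of the local cup product pairing $(-,-)_F$ gives
\[ \chi(\theta') = \sum_{\sigma \in G}\chi(\sigma^{-1})(c_F, \sigma z_F)_F = \sum_{\tau \in G}\chi(\tau)(\tau c_F, z_F)_F = \Bigl(\sum_{\tau \in G}\chi(\tau)\tau c_F,\ z_F\Bigr)_F. \]
One then combines the $G$-equivariance of $\log_\omega$ with the standard identification of the cup product pairing with the local trace pairing, namely
\[ (c, z)_F = {\rm Tr}_{F_p/\QQ_p}\bigl(\log_\omega(c)\cdot \exp_\omega^\ast(z)\bigr) \]
(a form of local Bloch--Kato duality for $V = V_pE$), in order to re-express $\chi(\theta')$ as a trace pairing in $F_p \otimes_{\QQ_p}\overline{\QQ}_p$ between the $\chi^{-1}$-isotypic element $\sum_\tau \chi(\tau)\tau\log_\omega(c_F)$ and $\exp_\omega^\ast(z_F)$.

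The key computation is then that of $\sum_{\tau \in G}\chi(\tau)\tau \log_\omega(c_F)$. By the definition of $c_F$ via (\ref{formal}), the operator $P := 1 - (a_p/p){\rm Fr}_p + (1/p){\rm Fr}_p^2$ sends $\log_\omega(c_F)$ to ${\rm Tr}_{\QQ(\zeta_m)/F}(\zeta_m)$; applying $\sum_\tau \chi(\tau)\tau$, which commutes with $P$, and using the elementary identity
\[ \sum_{\tau \in G}\chi(\tau)\tau\bigl({\rm Tr}_{\QQ(\zeta_m)/F}(\zeta_m)\bigr) = \sum_{\sigma \in G_m}\chi(\sigma)\zeta_m^\sigma = \tau_m(\chi) \]
(where $\chi$ is inflated to $G_m$) yields $P\cdot \bigl(\sum_\tau \chi(\tau)\tau\log_\omega(c_F)\bigr) = \tau_m(\chi)$. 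On the $\chi^{-1}$-isotypic component of $F_p \otimes \overline{\QQ}_p$, $P$ acts as the scalar $1 - (a_p/p)\chi^{-1}({\rm Fr}_p) + (1/p)\chi^{-1}({\rm Fr}_p)^2 = L_\Sigma(E,\chi^{-1},1)/L_S(E,\chi^{-1},1)$; combining this with the interpolation property (\ref{katoexp}) applied to $\chi^{-1}$ causes the local Euler factors at $p$ to cancel, leaving exactly $\chi(\theta') = \tau_m(\chi)L_S(E,\chi^{-1},1)/\Omega^+$.

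The main technical obstacle is the careful bookkeeping of the $\chi^{-1}$-isotypic projections inside $F_p \otimes_{\QQ_p}\overline{\QQ}_p$ and ensuring that the trace-pairing formula for $(-,-)_F$ is used with normalisations consistent with those of (\ref{katoexp}) and of (\ref{formal}). Modulo this bookkeeping the argument is a direct Euler-factor manipulation, and it closely parallels the computation carried out by Ota in \cite{ota}, to which one can refer for a model treatment.
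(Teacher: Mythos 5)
Your proposal is correct and follows essentially the same route as the paper's proof: reduce to the interpolation property (\ref{char}), factor the character sum of the cup-product pairing into the product of the $\log_\omega$- and $\exp_\omega^\ast$-character sums, evaluate the former as the inverse Euler factor at $p$ times $\tau_m(\chi)$, and conclude with Kato's formula (\ref{katoexp}). The only difference is that where you sketch the local duality identity and the eigencomponent computation for $\log_\omega(c_F)$ directly, the paper simply quotes these facts from \cite{ota} (its formula (5.10) and Prop.\ 5.8).
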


\begin{proof}
By (\ref{char}), it is sufficient to show
$$\sum_{\sigma \in G}(c_F,\sigma z_F)_F \chi^{-1}(\sigma)=\tau_m(\chi) \frac{L_{S } (E,\chi^{-1},1)}{\Omega^+} $$
for every $\chi \in \widehat G$. By the computation in \cite[(5.10)]{ota}, the left hand side is equal to
$$\left(\sum_{\sigma \in G}\log_\omega(\sigma c_F)\chi(\sigma) \right)\times \left( \sum_{\sigma \in G} \exp_\omega^\ast(\sigma z_F) \chi^{-1}(\sigma) \right).$$
Since we have
$$\sum_{\sigma \in G}\log_\omega(\sigma c_F)\chi(\sigma)=\left(1-\frac{a_p}{p}\chi^{-1}({\rm Fr}_p) +\frac 1p \chi^{-2}({\rm Fr}_p)\right)^{-1} \tau_m(\chi)$$
by \cite[Prop. 5.8]{ota}, the desired equality follows from Kato's formula (\ref{katoexp}). 
\end{proof}

%\begin{remark}
%In \cite{bks4}, it is conjectured that
%$$\{ \Phi(z_F) \mid \Phi \in \Hom_{\ZZ_p[G]}(H^1(\cO_{F,\Sigma},T),\ZZ_p[G])\} = {\rm Fitt}_{\ZZ_p[G]}^0(H^2(\cO_{F,\Sigma},T)).$$
%By Proposition \ref{modularkato}, this conjecture implies
%$$\theta_{F,S} \in {\rm Fitt}_{\ZZ_p[G]}^0(H^2(\cO_{F,\Sigma},T)).$$
%\end{remark}

\subsection{Bloch-Kato Selmer complexes}

%In this subsection, we study the Bloch-Kato Selmer complex $\rgamma_f(F,T)$. 

%\subsubsection{Comparison with $\Sigma$-cohomology}

In this section we shall use the local point $c_F \in E_1(F_p)$ that was constructed in \S\ref{constr} to construct an isomorphism of $\ZZ_p[G]$-modules of the form 
$$\varphi_F: {\det}_{\ZZ_p[G]}^{-1}(\rgamma(\cO_{F,\Sigma},T)) \xrightarrow{\sim} {\det}_{\ZZ_p[G]}^{-1}(\rgamma_f(F,T)).$$

To do this we note first that if $\ell \in \Sigma$ and $\ell \neq p$ (i.e., $\ell \mid mN$), then Hypothesis \ref{hyp1}(iii) implies $E(F_\ell)[p]$ vanishes and hence that the complex $\rgamma_{/f}(F_\ell,T)$ is acyclic. In this case, therefore, there exists a canonical isomorphism of $\ZZ_p[G]$-modules. 
\begin{eqnarray}\label{det ell}
{\det}_{\ZZ_p[G]}^{-1}(\rgamma_{/f} (F_\ell,T)) \simeq {\det}_{\ZZ_p[G]}^{-1}(0) = \ZZ_p[G]. 
\end{eqnarray}

Next we note that Hypothesis \ref{hyp1}(iii) implies the group 
$$H^2(F_p,T)\simeq E(F_p)[p^\infty]^\vee$$
vanishes, and hence that there is a natural identification
\begin{eqnarray}\label{at p}
\rgamma_{/f}(F_p,T) =H^1_{/f}(F_p,T)[-1]=E_1(F_p)^\ast[-1].
\end{eqnarray}

In addition, since $F/\QQ$ is tamely ramified, the $\ZZ_p[G]$-module $E_1(F_p)\simeq \cO_F\otimes_\ZZ \ZZ_p$ is isomorphic to $\ZZ_p[G]$ (by the Hilbert-Speiser theorem) and is generated by the element $c_F$. The identification (\ref{at p}) therefore induces 
 an isomorphism of $\ZZ_p[G]$-modules
\begin{eqnarray}\label{cmap}
{\det}_{\ZZ_p[G]}^{-1}(\rgamma_{/f}(F_p,T)) = {\det}_{\ZZ_p[G]}^{-1}(E_1(F_p)^\ast[-1]) = E_1(F_p)^\ast \simeq \ZZ_p[G],
\end{eqnarray}
in which the last map sends $c_F^\ast$ to $1$. 

Now we note that there is a canonical exact triangle of $\ZZ_p[G]$-modules
\begin{eqnarray}\label{exact}
\rgamma_f(F,T) \to \rgamma(\cO_{F,\Sigma},T) \to \bigoplus_{\ell \in \Sigma} \rgamma_{/f}(F_\ell,T),
\end{eqnarray}
and that the above argument shows that (each direct summand of) the last term in this triangle is a perfect complex of $\ZZ_p[G]$-modules. Since $\rgamma(\cO_{F,\Sigma},T)$ is also a perfect complex of $\ZZ_p[G]$-modules, this exact triangle therefore implies that $\rgamma_f(F,T)$ is a perfect complex of $\ZZ_p[G]$-modules and also induces a  canonical isomorphism of $\ZZ_p[G]$-modules

\begin{equation}\label{det triang}
{\det}_{\ZZ_p[G]}^{-1}(\rgamma(\cO_{F,\Sigma},T)) \simeq {\det}_{\ZZ_p[G]}^{-1}(\rgamma_f(F,T)) \otimes_{\ZZ_p[G]} \bigotimes_{\ell \in \Sigma} {\det}_{\ZZ_p[G]}^{-1}(\rgamma_{/f}(F_\ell,T)).
\end{equation}

Then, upon combining this isomorphism with the maps (\ref{det ell}) (for each $\ell\in \Sigma\setminus\{p\}$) and (\ref{cmap})  we obtain the desired isomorphism 
 $\varphi_F:{\det}_{\ZZ_p[G]}^{-1}(\rgamma(\cO_{F,\Sigma},T)) \to {\det}_{\ZZ_p[G]}^{-1}(\rgamma_f(F,T))$. 

The key property of this map $\varphi_F$ that we shall use in the sequel is established by the following result. 

\begin{proposition}\label{comm1}
There exist canonical maps $\pi_F$ and $\pi_{F,f}$ that lie in a commutative diagram of $\ZZ_p[G]$-modules
$$
\xymatrix{
{\det}_{\ZZ_p[G]}^{-1}(\rgamma(\cO_{F,\Sigma},T)) \ar[r]^{\quad\quad\pi_F} \ar[d]_{\varphi_F} & H^1(\cO_{F,\Sigma},T) \ar[d]^{\sum_{\sigma \in G} (c_F, \sigma(\cdot))_F \sigma^{-1}} \\
{\det}_{\ZZ_p[G]}^{-1}(\rgamma_f(F,T)) \ar[r]_{\quad\quad \pi_{F,f}} & \ZZ_p[G].
}
$$  
\end{proposition}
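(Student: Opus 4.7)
The plan is to construct $\pi_F$ and $\pi_{F,f}$ using the determinantal formalism of perfect complexes and then to deduce the commutativity of the diagram from a local compatibility at the prime $p$.

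The map $\pi_F$ is defined exactly as in the proof of Lemma \ref{kataoka lemma}: take the inverse limit over $m$ (and, in the Iwasawa-theoretic setting, over $n$) of the composite
$${\det}^{-1}_{\ZZ/p^m[G]}(\rgamma(\cO_{F,\Sigma},T/p^m)) \xrightarrow{\pi_{F,m}} \mathrm{SS}_1(T_F/p^m) \xrightarrow{\mathrm{Reg}} \mathrm{KS}_1(T_F/p^m) \xrightarrow{\kappa \mapsto \kappa_1} H^1(\cO_{F,\Sigma},T/p^m),$$
in which the first two isomorphisms come from \cite[Th. 3.12(ii)]{sbA} and \cite[Th. 5.2]{bss}, and the last map evaluates a Kolyvagin system at the trivial squarefree integer. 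The map $\pi_{F,f}$ I would define to be the homomorphism that fits the square, i.e.\ $\pi_{F,f} := \Bigl(\sum_{\sigma \in G}(c_F, \sigma(\cdot))_F\, \sigma^{-1}\Bigr) \circ \pi_F \circ \varphi_F^{-1}$; the main content of the proposition then becomes the assertion that this map is intrinsic and functorial (i.e.\ coincides with the map one would obtain by applying the analogous Stark/Kolyvagin formalism directly to $\rgamma_f(F,T)$).

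To prove commutativity (and with it the canonicity of $\pi_{F,f}$) I would unwind the local-global structure of $\varphi_F$. The isomorphism $\varphi_F$ is built from the exact triangle (\ref{exact}) together with the trivialisations (\ref{det ell}) at $\ell \in \Sigma \setminus \{p\}$ and (\ref{cmap}) at $p$. For $\ell \neq p$ the complex $\rgamma_{/f}(F_\ell,T)$ is acyclic, so (\ref{det ell}) carries no information and the compatibility with $\pi_F$ is automatic. The key computation is therefore at $\ell = p$: under the identification (\ref{at p}) and the Weil pairing $T \simeq T^\ast(1)$, Tate local duality identifies the transition isomorphism on inverse determinants coming from the local-at-$p$ exact triangle $\rgamma_f(F_p,T) \to \rgamma(F_p,T) \to \rgamma_{/f}(F_p,T)$ with the cup-product pairing $H^1_f(F_p,T) \times H^1_{/f}(F_p,T) \to \ZZ_p$. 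Under the trivialisation (\ref{cmap}), which sends $c_F^\ast$ to $1$, this transition map becomes precisely $\sum_{\sigma \in G} (c_F, \sigma(\cdot))_F\, \sigma^{-1}$ composed with the localisation $H^1(\cO_{F,\Sigma},T) \to H^1_{/f}(F_p,T)$, and this is exactly the right-hand vertical arrow of the diagram.

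The main obstacle I expect is this final local identification at $p$. Making rigorous the statement that the abstract determinantal transition map at $p$ agrees with the explicit cup-product pairing with $c_F$ requires a careful unwinding of Tate local duality, of the Weil-pairing identification $T \simeq T^\ast(1)$, and of the way that (\ref{cmap}) is built from the formal-group isomorphism (\ref{formal}) and the specific generator $c_F$. Once this local identity is in place, the global commutativity follows formally from the multiplicativity of the determinant functor over the exact triangle (\ref{exact}) and the triviality of the local contributions at $\ell \in \Sigma \setminus \{p\}$.
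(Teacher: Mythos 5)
Your proposal locates the right key ingredient (the only non-trivial local contribution is at $p$, where local Tate duality identifies $H^1_{/f}(F_p,T)$ with $E_1(F_p)^\ast$ and the trivialisation $c_F^\ast\mapsto 1$ turns the induced functional into $\sum_{\sigma\in G}(c_F,\sigma(\cdot))_F\,\sigma^{-1}$), and this is indeed the final step of the paper's argument. But as written there is a genuine gap. First, you define $\pi_{F,f}$ as $\bigl(\sum_{\sigma}(c_F,\sigma(\cdot))_F\sigma^{-1}\bigr)\circ\pi_F\circ\varphi_F^{-1}$, which makes the commutativity of the square vacuous and shifts the entire content onto the claim that this map is the canonical one; that claim is exactly what is needed later (Proposition \ref{comm2} applies the machinery of \cite{bst} to the canonical $\pi_{F,f}$), and your proposal does not establish it — you yourself flag it as the ``main obstacle'' without carrying it out. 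Second, the assertion that, once the local identification at $p$ is in place, ``the global commutativity follows formally from the multiplicativity of the determinant functor'' is not correct in kind: $\pi_F$ and $\pi_{F,f}$ are not determinant isomorphisms but evaluation (higher special element / Stark-system type) maps with values in $H^1(\cO_{F,\Sigma},T)$ and $\ZZ_p[G]$ respectively, and their compatibility under the determinant isomorphism $\varphi_F$ is not a formal consequence of multiplicativity over the triangle (\ref{exact}).

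What the paper does, and what your sketch is missing, is precisely the mechanism that links the determinant-level isomorphism $\varphi_F$ to these evaluation maps: it represents $\rgamma(\cO_{F,\Sigma},T)$ by a two-term complex $P_F\xrightarrow{\psi}Q_F$ of free modules, identifies $\rgamma_f(F,T)$ with the complex $P_F\xrightarrow{\psi_f}E_1(F_p)^\ast\oplus Q_F$ with $\psi_f(a)=({\rm loc}_p(a),-\psi(a))$, defines $\pi_F$ and $\pi_{F,f}$ explicitly as $\bigwedge_{2\le i\le d}\psi_i$ and $\bigwedge_{1\le i\le d}\psi_{f,i}$ (with $c_1:=c_F^\ast$), computes that $\varphi_F$ sends $a\otimes c_2^\ast\wedge\cdots\wedge c_d^\ast$ to $a\otimes c_1^\ast\wedge\cdots\wedge c_d^\ast$, and then uses the elementary wedge identity $\bigwedge_{1\le i\le d}\psi_{f,i}=\psi_{f,1}\circ\bigwedge_{2\le i\le d}\psi_i$ (using $\psi_{f,i}=-\psi_i$ for $i\ge 2$) to reduce everything to $\psi_{f,1}(a)=c_1^\ast({\rm loc}_p(a))=\sum_{\sigma\in G}(c_F,\sigma(a))_F\sigma^{-1}$. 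Your appeal to local duality is an abstract restatement of this last identity, but without an argument of the above type (or a citation of a precise functoriality statement for these evaluation maps under change of local conditions) the passage from the determinant isomorphism to the commutative square of non-isomorphisms does not go through, and the canonicity of $\pi_{F,f}$ remains unproved.
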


\begin{proof}
%Since the Euler characteristic for $\rgamma(\cO_{F,\Sigma},T)$ vanishes, 
One knows 
that the complex $\rgamma(\cO_{F,\Sigma},T)$ is represented by
$$P_F \xrightarrow{\psi} Q_F,$$
where $P_F$ and $Q_F$ are free $\ZZ_p[G]$-modules of rank $d$ and $d-1$ respectively for some $d>r$, and $P_F$ is placed in degree one (see the proof of \cite[Th. 4.3]{bks4}). Since $\rgamma_{/f}(F_\ell, T)$ is acyclic if $\ell \in S =\Sigma \setminus \{p\}$, we see by the triangle (\ref{exact}) that
$$\rgamma_f(F,T)\simeq {\rm Cone}\left(\rgamma(\cO_{F,\Sigma},T) \xrightarrow{{\rm loc}_p} \rgamma_{/f}(F_p,T) \right)[-1].$$
By the definition of mapping cones and (\ref{at p}), we see that
$$\rgamma_f(F,T) = \left[  P_F \xrightarrow{\psi_f}  E_1(F_p)^\ast \oplus Q_F\right],$$
where $P_F$ is placed in degree one and $\psi_f$ is given by
$$\psi_f(a) := ({\rm loc}_p(a) , -\psi(a)).$$

Fix a basis $\{c_2,\ldots,c_d\}$ of  $Q_F$. For each $i$ with $2 \leq i\leq d$, we set
$$\psi_i := c_i^\ast \circ \psi : P_F \to \ZZ_p[G] .$$
Similarly, for each $i$ with $1\leq i\leq d$, we set
$$\psi_{f,i}:=c_i^\ast \circ \psi_f: P_F\to \ZZ_p[G],$$
where $c_1:=c_F^\ast \in E_1(F_p)^\ast$. 

We define 
$$\pi_F: {\det}_{\ZZ_p[G]}^{-1}(\rgamma(\cO_{F,\Sigma},T)) = {\bigwedge}_{\ZZ_p[G]}^d P_F \otimes_{\ZZ_p[G]} {\bigwedge}_{\ZZ_p[G]}^{d-1}Q_F^\ast \to P_F$$
by
$$\pi_F(a \otimes c_2^\ast  \wedge\cdots \wedge c_d^\ast) := \left( {\bigwedge}_{2\leq i\leq d}\psi_i \right)(a).$$
One checks that this is well-defined and the image is contained in $H^1(\cO_{F,\Sigma},T)$. 

Similarly, we define
$$\pi_{F,f}: {\det}_{\ZZ_p[G]}^{-1}(\rgamma_f(F,T)) = {\bigwedge}_{\ZZ_p[G]}^d P_F \otimes_{\ZZ_p[G]}{\bigwedge}_{\ZZ_p[G]}^d (E_1(F_p)^\ast \oplus Q_F)^\ast \to \ZZ_p[G]$$
by
$$\pi_{F,f}(a \otimes c_1^\ast \wedge \cdots \wedge c_d^\ast):=\left({\bigwedge}_{1\leq i\leq d}\psi_{f,i}\right)(a).$$

We now prove the commutativity of the diagram in the claim. One checks by definition that $\varphi_F$ is explicitly given by
$$\varphi_F(a \otimes c_2^\ast \wedge \cdots \wedge c_d^\ast)=a\otimes c_1^\ast \wedge \cdots \wedge c_d^\ast.$$
Noting that $\psi_{f,i}=-\psi_i$ for $2\leq i \leq d$, we have
$$\left({\bigwedge}_{1\leq i\leq d}\psi_{f,i}\right)(a)=(-1)^{d-1} \psi_{f,1}\circ \left( {\bigwedge}_{2\leq i \leq d}\psi_{f,i}\right)(a)=\psi_{f,1}\circ \left({\bigwedge}_{2\leq i \leq d}\psi_i \right)(a).$$
Hence we have
$$\pi_{F,f}\circ \varphi_F = \psi_{f,1} \circ \pi_F. $$
So it is sufficient to prove 
$$\psi_{f,1}(a)=\sum_{\sigma\in G}(c_F, \sigma(a))_F \sigma^{-1}$$
for any $a \in H^1(\cO_{F,\Sigma},T)$. By the definition of $\psi_{f,1}$, we have
$$\psi_{f,1}(a)=c_1^\ast({\rm loc}_p(a)).$$
Here ${\rm loc}_p(a)$ is an element of $H^1_{/f}(F_p,T)$, which we identify with $E_1(F_p)^\ast$ via the pairing $(-,-)_F$, and $c_1^\ast $ is by definition the map $E_1(F_p)^\ast \to \ZZ_p[G]$ sending $c_F^\ast$ to 1. It is now easy to check
$$c_1^\ast({\rm loc}_p(a))=\sum_{\sigma \in G}( c_F,\sigma(a) )_F \sigma^{-1},$$
which completes the proof. 
\end{proof}

One can define maps $\pi_{F_{n}}$ and $\pi_{F_{\infty}}$ similarly for the $n$-th layer $F_{n}$ and 
the cyclotomic $\ZZ_{p}$-extension $F_{\infty}$. 
 In particular, since the following diagram (in which both vertical arrows are the natural projection maps) 
$$
\xymatrix{
{\det}_{\Lambda_{F_\infty}}^{-1}(\rgamma(\ZZ_{\Sigma},T_{F_{\infty}})) \ar[r]^{\quad\quad\pi_{F_{\infty}}} 
\ar[d]_{} & H^1(\ZZ_{\Sigma},T_{F_\infty})
 \ar[d]^{} \\
{\det}_{\ZZ_p[G]}^{-1}(\rgamma(\ZZ_{\Sigma},T_{F}))= 
{\det}_{\ZZ_p[G]}^{-1}(\rgamma(\cO_{F,\Sigma},T)) \ar[r]_{\quad\quad\quad\quad\quad\quad\quad\quad \pi_{F}} & 
H^1(\cO_{F,\Sigma},T),
}
$$
is commutative, one has  
$$\pi_F(\fz_F)=z_F.$$
We can therefore deduce the following consequence of Propositions \ref{modularkato} and \ref{comm1}. 

\begin{corollary}\label{corkato} In $\ZZ_p[G]$ one has 
$$\theta_{F,S}=\pi_{F,f}(\varphi_F(\fz_{F})).$$
\end{corollary}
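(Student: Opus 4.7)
The plan is to observe that this corollary is essentially a direct concatenation of Proposition \ref{modularkato} and the commutative diagram in Proposition \ref{comm1}, using the compatibility of the determinantal zeta elements with the maps $\pi_F$ that was noted immediately before the corollary is stated.

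More precisely, I would first invoke the commutative diagram in Proposition \ref{comm1} with input $\fz_F \in {\det}_{\ZZ_p[G]}^{-1}(\rgamma(\cO_{F,\Sigma},T))$. Tracing the two paths around this square gives the identity
$$\pi_{F,f}(\varphi_F(\fz_F)) = \sum_{\sigma \in G} (c_F, \sigma \pi_F(\fz_F))_F \, \sigma^{-1}$$
in $\ZZ_p[G]$. Next, I would apply the compatibility of the maps $\pi_{F_\infty}$ and $\pi_F$ under the natural projection, which together with the definition of $\fz_F$ as the image of $\fz_{F_\infty}$ under
$${\det}_{\Lambda_{F_\infty}}^{-1}(\rgamma(\ZZ_\Sigma,T_{F_\infty})) \twoheadrightarrow {\det}_{\ZZ_p[G]}^{-1}(\rgamma(\cO_{F,\Sigma},T))$$
(Definition \ref{det zeta def}) and the defining property $\pi_{F_\infty}(\fz_{F_\infty}) = z_{F_\infty}$ of the determinantal zeta element yields the identity $\pi_F(\fz_F) = z_F$. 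This compatibility was spelled out via the commutative square displayed immediately before the statement of the corollary.

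Substituting $\pi_F(\fz_F) = z_F$ into the previous display gives
$$\pi_{F,f}(\varphi_F(\fz_F)) = \sum_{\sigma \in G} (c_F, \sigma z_F)_F \, \sigma^{-1},$$
and the right-hand side is precisely $\theta_{F,S}$ by Proposition \ref{modularkato}. This completes the argument.

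There is no real obstacle here: the serious content sits in Propositions \ref{modularkato} and \ref{comm1} (which in turn rest on Ota's computation of the dual exponential of $c_F$ and on the explicit model of $\rgamma_f(F,T)$ as the cone of localization). The corollary itself is then just a formal diagram-chase, and the only thing one needs to be careful about is that all three ingredients (the interpolation formula for $\theta_{F,S}$, the commutativity of the square in Proposition \ref{comm1}, and the compatibility $\pi_F(\fz_F) = z_F$) have been set up with matched normalizations, in particular the sign and direction of the identification $c_F^\ast \mapsto 1$ used to define $\varphi_F$ in (\ref{cmap}).
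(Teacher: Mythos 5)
Your argument is exactly the paper's: the paper deduces $\pi_F(\fz_F)=z_F$ from the commutative square displayed just before the corollary (using the definition of $\fz_F$ as the projection of $\fz_{F_\infty}$ and $\pi_{F_\infty}(\fz_{F_\infty})=z_{F_\infty}$, together with the norm-compatibility of Kato's elements), and then combines this with the square of Proposition \ref{comm1} and the formula of Proposition \ref{modularkato}, precisely as you do. The proposal is correct and takes essentially the same route.
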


\subsection{Bockstein maps for Selmer complexes}\label{section bmc}

%We recall some necessary ingredients from \cite{bmc}. 
As we have seen before, Hypothesis \ref{hyp1} ensures that the Bloch-Kato Selmer complex $\rgamma_f(F,T)$ is a perfect complex of $\ZZ_p[G]$-modules which is acyclic outside degrees one and two and satisfies the base change property %(see Lemma \ref{base}(v) below). Although this property is sufficient to construct a Bockstein map (as in \cite[\S B.2]{bmc}), we further know the base change property
$$\rgamma_f(F,T)\lotimes_{\ZZ_p[G]}\ZZ_p \simeq \rgamma_f(\QQ,T).$$
Using this, we obtain an exact triangle
$$\rgamma_f(\QQ,T)\lotimes_{\ZZ_p}I/I^2 \to \rgamma_f(F,T)\lotimes_{\ZZ_p[G]}\ZZ_p[G]/I^2 \to \rgamma_f(\QQ,T),$$
and hence an induced connecting homomorphism in its long exact cohomology sequence
$$H^1_f(\QQ,T) \to H^2(\rgamma_f(\QQ,T)\otimes_{\ZZ_p} I/I^2)=H^2_f(\QQ,T)\otimes_{\ZZ_p}I/I^2,$$
where the displayed equality follows directly from the fact that $\rgamma_f(\QQ,T)$ is acyclic in degrees greater than two. 

It is convenient to use $(-1)$-times the above `Bockstein map', which we denote by
$$\beta_{f}:H^1_f(\QQ,T) \to H^2_f(\QQ,T) \otimes_{\ZZ_p}I/I^2.$$
Upon combining this map with the canonical isomorphisms 
$$H^1_f(\QQ,T)\simeq \ZZ_p\otimes_\ZZ E(\QQ) \text{ and }H^2_f(\QQ,T)\simeq {\rm Sel}_p(E/\QQ)^\vee,$$
where ${\rm Sel}_p(E/\QQ)$ denotes the classical $p$-Selmer group, we obtain a composite map (which we denote by the same symbol)
\begin{eqnarray*}\label{betaf}
\beta_{f}: \ZZ_p \otimes_\ZZ E(\QQ) \to {\rm Sel}_p(E/\QQ)^\vee \otimes_{\ZZ_p}I/I^2 \twoheadrightarrow (\ZZ_p\otimes_\ZZ E(\QQ))^\ast \otimes_{\ZZ_p}I/I^2,
\end{eqnarray*}
where the second map is the natural surjection. We recall that the associated pairing
$$\langle -,-\rangle_F: (\ZZ_p \otimes_\ZZ E(\QQ)) \times (\ZZ_p\otimes_\ZZ E(\QQ)) \to I/I^2; \ (x,y) \mapsto \beta_{f}(y)(x)$$
is known to coincide with the Mazur-Tate pairing (\ref{mt pair}) (this is proved by Macias Castillo and the first author in \cite[Th. 10.3]{bmc}).

Let
$${\rm Reg}_F: \ZZ_p\otimes_\ZZ \left({\bigwedge}_\ZZ^r E(\QQ) \otimes_\ZZ {\bigwedge}_\ZZ^r E(\QQ) \right) \to I^r/I^{r+1}$$
be the map defined by
$${\rm Reg}_F(x_1\wedge\cdots \wedge x_r \otimes y_1\wedge \cdots \wedge y_r):=\det(\langle x_i,y_j \rangle_F)_{1\leq i,j\leq r}.$$

\begin{proposition}\label{comm2}
Let $\pi_{F,f}: {\det}_{\ZZ_p[G]}^{-1}(\rgamma_f(F,T)) \to \ZZ_p[G]$ be the map constructed in Proposition \ref{comm1}. Then the following claims are valid.
\begin{itemize}
\item[(i)] The image of $\pi_{F,f}$ is contained in $I^r$. 
\item[(ii)] The following diagram is commutative:
$$
\xymatrix{
{\det}_{\ZZ_p[G]}^{-1}(\rgamma_f(F,T)) \ar@{->>}[dd]_{\nu_f} \ar[r]^{\pi_{F,f}} & I^r  \ar@{->>}[rd]& \\
 & & I^r/I^{r+1} \\
 {\det}_{\ZZ_p}^{-1}(\rgamma_f(\QQ,T)) \ar[r]_{\hskip -0.5truein\pi_{\QQ,f}} &  \ZZ_p\otimes_\ZZ \left({\bigwedge}_\ZZ^r E(\QQ) \otimes_\ZZ {\bigwedge}_\ZZ^r E(\QQ) \right), \ar[ru]_{\quad\quad{\rm Reg}_F}& 
}
$$
where both $\nu_f$ and the unlabelled arrow denote the natural projection maps and $\pi_{\QQ, f}$ is induced by the canonical isomorphism
$${\det}_{\ZZ_p}^{-1}(\rgamma_f(\QQ,T))\simeq {\det}_{\ZZ_p}(H^1_f(\QQ,T)) \otimes_{\ZZ_p} {\det}_{\ZZ_p}^{-1}(H^2_f(\QQ,T)).$$
\end{itemize}
\end{proposition}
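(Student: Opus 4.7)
My strategy is to compute $\pi_{F,f}$ directly as an equivariant determinant via the two-term representative of $\rgamma_f(F,T)$ already constructed in the proof of Proposition \ref{comm1}, and then to identify the result modulo $I^{r+1}$ with the determinant of the Bockstein pairing $\beta_f$.

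Recall that $\rgamma_f(F,T)$ is represented by $[P_F \xrightarrow{\psi_f} E_1(F_p)^\ast \oplus Q_F]$ with both source and target free of rank $d$ over $\ZZ_p[G]$, and that by construction $\pi_{F,f}$ sends a generator of the free $\ZZ_p[G]$-module ${\det}_{\ZZ_p[G]}^{-1}(\rgamma_f(F,T))$ to the determinant of the matrix $M = (m_{ij})$ of $\psi_f$. Since $H^1_f(\QQ,T) \simeq \ZZ_p \otimes_\ZZ E(\QQ)$ is $\ZZ_p$-free of rank $r$ (by Hypothesis \ref{hyp1}) and coincides with $\ker(\psi_f \otimes_{\ZZ_p[G]} \ZZ_p) \subseteq P_F / IP_F$, one can, using Nakayama's lemma, choose bases $(e_i)_{i=1}^d$ of $P_F$ and $(c_i)_{i=1}^d$ of $E_1(F_p)^\ast \oplus Q_F$ (with $c_1 = c_F^\ast$, as required in Proposition \ref{comm1}) such that $e_1,\ldots,e_r$ reduce modulo $I$ to a $\ZZ_p$-basis of $H^1_f(\QQ,T)$. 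In these bases the first $r$ columns of $M$ have every entry in $I$, so Laplace expansion along them immediately gives $\det M \in I^r$, proving (i).

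For (ii), the same expansion yields the exact identity
$$\det M = \sum_{\substack{S \subseteq \{1,\ldots,d\}\\ |S|=r}} \epsilon_S \cdot \det(m_{ij})_{i \in S,\, j \le r}\cdot \det(m_{ij})_{i \notin S,\, j > r},$$
and modulo $I^{r+1}$ each $r \times r$ minor depends only on the entries of $M$ modulo $I^2$, while each complementary $(d-r)\times(d-r)$ minor is relevant only modulo $I$. By the definition of the Bockstein homomorphism associated to the exact triangle $\rgamma_f(\QQ,T) \lotimes_{\ZZ_p} I/I^2 \to \rgamma_f(F,T) \lotimes_{\ZZ_p[G]} \ZZ_p[G]/I^2 \to \rgamma_f(\QQ,T)$, computed via our two-term model, one has for $1 \le j \le r$
$$\beta_f(e_j \bmod I) = -\sum_{i=1}^d [c_i \bmod I] \otimes (m_{ij} \bmod I^2) \,\in\, H^2_f(\QQ,T) \otimes_{\ZZ_p} I/I^2,$$
where $[\,\cdot\,]$ denotes the class in $\coker(\psi_f \otimes_{\ZZ_p[G]} \ZZ_p) = H^2_f(\QQ,T)$. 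Hence the $r \times r$ minors, read in $I^r/I^{r+1}$, record precisely the determinants of Bockstein-pairing entries in the chosen bases, while the cofactor $(d-r) \times (d-r)$ minors, read modulo $I$, are exactly the coordinates of $\nu_f$ of the generator under the passage-to-cohomology isomorphism ${\det}_{\ZZ_p}^{-1}(\rgamma_f(\QQ,T)) \simeq {\det}_{\ZZ_p} H^1_f(\QQ,T) \otimes {\det}_{\ZZ_p}^{-1} H^2_f(\QQ,T)$ used to define $\pi_{\QQ,f}$.

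Finally, invoking the identification of $\beta_f$ with the Mazur-Tate pairing $\langle-,-\rangle_F$ established in \cite[Th. 10.3]{bmc}, the Laplace sum above is recognised as the value of ${\rm Reg}_F$ applied to $\pi_{\QQ,f}(\nu_f(\cdot))$, yielding the required commutativity. The main technical obstacle will be the careful bookkeeping needed for the passage-to-cohomology identification entering the definition of $\pi_{\QQ,f}$: namely, the identification of ${\det}_{\ZZ_p}^{-1} H^2_f(\QQ,T)_{\rm tf}$ with ${\det}_{\ZZ_p} H^1_f(\QQ,T)$ via the global duality pairing (which requires finiteness of $\sha(E/\QQ)[p^\infty]$, built into our standing hypotheses), together with tracking the $\#\sha(E/\QQ)[p^\infty]$ torsion factor and all Laplace and duality signs so that they combine to produce the stated equality in $I^r/I^{r+1}$.
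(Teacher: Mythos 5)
Your proposal is correct in substance, but it takes a different route from the paper: the paper disposes of Proposition \ref{comm2} in one line by invoking the general result \cite[Th. 3.3.7]{bst} with the data $(R,C,J,a,a')=(\ZZ_p,\rgamma_f(F,T),G,0,r)$, whereas you in effect reprove the relevant special case of that theorem by hand, using the explicit two-term representative $[P_F \xrightarrow{\psi_f} E_1(F_p)^\ast\oplus Q_F]$ from the proof of Proposition \ref{comm1}, an adapted basis of $P_F$ whose first $r$ vectors reduce to a basis of $H^1_f(\QQ,T)=\ker(\psi_f\otimes_{\ZZ_p[G]}\ZZ_p)$, Laplace expansion of $\det M$ along the first $r$ columns, and the explicit matrix description of the Bockstein map $\beta_f$. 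This is exactly the mechanism underlying the cited theorem, so your argument is sound and has the virtue of being self-contained and transparent about where the pairing $\beta_f(y)(x)$ (hence, via \cite[Th. 10.3]{bmc}, the Mazur--Tate pairing) enters; what the citation buys the paper is precisely the bookkeeping you defer at the end, namely the Cauchy--Binet-type matching of the sum of products of minors with ${\rm Reg}_F\circ\pi_{\QQ,f}\circ\nu_f$, the signs, and the normalisation of $\pi_{\QQ,f}$ relative to the torsion of $H^2_f(\QQ,T)$ (the $\#\sha(E/\QQ)[p^\infty]$ factor), which must be fixed consistently with the identification used later in Lemma \ref{lemalg}. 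Two small points to make explicit if you write this up: your basis-lifting step needs that $G$ is a $p$-group (as arranged via Remark \ref{pextension}), so that $I$ lies in the Jacobson radical of $\ZZ_p[G]$ and Nakayama applies, together with the observation that $\ker(\psi_f\otimes\ZZ_p)$ is a saturated, hence free direct summand, $\ZZ_p$-submodule of $P_F/IP_F$; and note that for Proposition \ref{comm2} itself the appeal to \cite{bmc} is not needed, since ${\rm Reg}_F$ is defined directly from the Bockstein pairing, the comparison with the geometric Mazur--Tate pairing being used only when one passes to $R_F$ in the final deduction.
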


\begin{proof}
This follows directly from the argument of \cite[Th. 3.3.7]{bst} after fixing the data $(R,C,J,a,a')$ in loc. cit. to be $(\ZZ_p, \rgamma_f(F,T), G, 0,r)$. 
\end{proof}

\subsection{The algebraic Birch and Swinnerton-Dyer element}

Let
$$\varphi_\QQ: {\det}_{\ZZ_p}^{-1}(\rgamma(\ZZ_\Sigma,T)) \xrightarrow{\sim} {\det}_{\ZZ_p}^{-1}(\rgamma_f(\QQ,T))$$
be the isomorphism induced by $\varphi_F$, which fits into the commutative diagram
\begin{eqnarray}\label{comm3}
\xymatrix{
{\det}_{\ZZ_p[G]}^{-1} (\rgamma(\cO_{F,\Sigma},T)) \ar[r]^{\hskip 0.1truein\varphi_F}\ar@{->>}[d]_{\nu} & {\det}_{\ZZ_p[G]}^{-1} (\rgamma_f(F,T)) \ar@{->>}[d]^{\nu_f}\\
{\det}_{\ZZ_p}^{-1}(\rgamma(\ZZ_\Sigma ,T)) \ar[r]_{\varphi_\QQ}& {\det}_{\ZZ_p}^{-1} (\rgamma_f(\QQ,T)).
}
\end{eqnarray}

In the following result we use the element $\eta^{\rm alg}$ of ${\det}_{\ZZ_p}^{-1}(\rgamma(\ZZ_\Sigma ,T))$ defined in (\ref{eta alg def}), the $S$-truncated algebraic Birch and Swinnerton-Dyer constant $\mathcal{L}^{\rm alg}_S\in \QQ^\times$ defined in (\ref{euler factor}) and the Mazur-Tate regulator $R_F\in I^r/I^{r+1}$ defined in (\ref{MT reg def}). 

\begin{lemma}\label{lemalg} In $I^r/I^{r+1}$ one has 
$$({\rm Reg}_F \circ \pi_{\QQ,f}\circ \varphi_\QQ) ( \eta^{\rm alg})= (-1)^{\nu(m)}\mathcal{L}^{\rm alg}_S\cdot R_F.$$
\end{lemma}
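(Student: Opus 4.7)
The strategy is to directly compute the image of $\eta^{\rm alg}$ under the composite $ {\rm Reg}_F \circ \pi_{\QQ,f}\circ \varphi_\QQ$, carefully tracking the scalar contributed at $p$ by the change of trivialization between $\varepsilon_\Sigma$ and $\varphi_\QQ$, and then invoking the very definition of the Mazur--Tate regulator at the end. Recall that
\[ \eta^{\rm alg}= \mathcal{L}^{\rm alg}_\Sigma\cdot \bm{x}\otimes \varepsilon_\Sigma(\bm{x}),\]
that $\varepsilon_\Sigma$ is defined by trivializing the one-dimensional $\QQ_p$-line $(E_1(\QQ_p)\otimes_{\ZZ_p}\QQ_p)^*$ via the formal logarithm $\log_\omega$, and that $\varphi_\QQ$ is built from the exact triangle (\ref{exact})---whose summands at primes in $S$ are acyclic by Hypothesis \ref{hyp1}(iii)---together with the trivialization of $E_1(\QQ_p)^*$ via $c_\QQ^\ast\mapsto 1$ from (\ref{cmap}).

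The first step is to identify the scalar factor produced by the discrepancy between these two trivializations. If $e\in E_1(\QQ_p)\otimes\QQ_p$ denotes the element with $\log_\omega(e)=1$, then $c_\QQ=\log_\omega(c_\QQ)\cdot e$, so the $\log_\omega$-dual $e^*$ equals $\log_\omega(c_\QQ)\cdot c_\QQ^\ast$, and applying $\varphi_\QQ$ to $\bm{x}\otimes \varepsilon_\Sigma(\bm{x})$ therefore introduces a scalar of $\log_\omega(c_\QQ)$. I would then compute this scalar using the construction of $c_F$ in \S\ref{constr}: since $m$ is square-free, the trace ${\rm Tr}_{\QQ(\zeta_m)/\QQ}(\zeta_m)$ equals $\mu(m)=(-1)^{\nu(m)}$, and since Frobenius acts trivially over $\QQ$ the isomorphism (\ref{formal}) reduces to multiplication by $1-a_p/p+1/p=\#E(\FF_p)/p$, so
\[\log_\omega(c_\QQ)=(-1)^{\nu(m)}\cdot \frac{p}{\#E(\FF_p)}.\]
Combining with the identity $\mathcal{L}^{\rm alg}_\Sigma=\mathcal{L}^{\rm alg}_S\cdot \#E(\FF_p)/p$ obtained from (\ref{euler factor}) at $\ell=p$ gives the clean formula
\[\mathcal{L}^{\rm alg}_\Sigma\cdot \log_\omega(c_\QQ)=(-1)^{\nu(m)}\cdot \mathcal{L}^{\rm alg}_S.\]

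With the scalar pinned down, the remaining task is to verify that the cohomological part of $\bm{x}\otimes \varepsilon_\Sigma(\bm{x})$ is sent by $\pi_{\QQ,f}\circ\varphi_\QQ$ to exactly $\bm{x}\otimes \bm{x}$ in $\bigwedge_\ZZ^r E(\QQ)\otimes \bigwedge_\ZZ^r E(\QQ)$; once this is in place, the definition (\ref{MT reg def}) immediately yields ${\rm Reg}_F(\bm{x}\otimes\bm{x})=R_F$ and the proof is finished. This final identification is the main obstacle and where the work lies: it amounts to checking that the Poitou--Tate duality $H^2_f(\QQ,T)_{\rm tf}^\ast\simeq E(\QQ)\otimes\ZZ_p$ underlying $\pi_{\QQ,f}$ is compatible with the Poitou--Tate sequence $0\to H^2(\ZZ_\Sigma,V)^\ast\to H^1(\ZZ_\Sigma,V)\to E_1(\QQ_p)\otimes\QQ_p\to 0$ used to define $\varepsilon_\Sigma$. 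Both dualities come from the same local and global cup-product pairings, so this compatibility can be verified directly by working with the explicit complex $P_F\to Q_F$ introduced in the proof of Proposition \ref{comm1}, restricting to $F=\QQ$, and diagram-chasing the passage-to-cohomology isomorphism through the exact triangle (\ref{exact}) to confirm that no hidden scalar appears.
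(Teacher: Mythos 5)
Your proposal is correct and follows essentially the same route as the paper: the paper also introduces the auxiliary trivialization $\psi_\QQ$ built from $\varepsilon_\Sigma$, shows it differs from $\pi_{\QQ,f}\circ\varphi_\QQ$ exactly by the scalar coming from the change of generator of $E_1(\QQ_p)$ at $p$ (computed via $\tfrac{\#E(\FF_p)}{p}\log_\omega(\N_{F/\QQ}(c_F))={\rm Tr}_{\QQ(\zeta_m)/\QQ}(\zeta_m)=(-1)^{\nu(m)}$, which is your $\log_\omega(c_\QQ)$ since the trivialization underlying $\varphi_\QQ$ uses the norm of $c_F$), and then cancels the Euler factor at $p$ to pass from $\mathcal{L}^{\rm alg}_\Sigma$ to $\mathcal{L}^{\rm alg}_S$ before invoking $R_F={\rm Reg}_F(\bm{x}\otimes\bm{x})$. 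The compatibility of the global-duality components that you flag as the remaining verification is likewise only asserted (not spelled out) in the paper's proof, so your treatment is on a par with it.
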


\begin{proof} The inverse of the isomorphism $\varepsilon_\Sigma$ from (\ref{can comp ident}) induces a composite isomorphism of $\QQ_p$-spaces  
%
%We have a canonical exact sequence
%$$0 \to \QQ_p\to \QQ_p\otimes_\ZZ E(\QQ)^\ast \to H^2(\ZZ_\Sigma,V) \to 0,$$
%where the first non-trivial map sends 1 to the logarithm map $\log_\omega: E(\QQ)\to E(\QQ_p)\to \QQ_p$ associated to the %fixed N\'eron differential $\omega$. 
\begin{align*}\psi_\QQ : {\det}_{\ZZ_p}^{-1}(\rgamma(\ZZ_\Sigma,T))\otimes_{\ZZ_p}\QQ_p \simeq&\,\left(\QQ_p \otimes_\ZZ {\bigwedge}_\ZZ^r E(\QQ) \right)\otimes_{\QQ_p} {\bigwedge}_{\QQ_p}^{r-1}H^2(\ZZ_\Sigma,V)^\ast\\
\xrightarrow{1\otimes \varepsilon_\Sigma^{-1}}&\, \QQ_p\otimes_\ZZ \left({\bigwedge}_\ZZ^r E(\QQ)\otimes_\ZZ {\bigwedge}_\ZZ^r E(\QQ)\right)\end{align*}
in which the first isomorphism is induced by (\ref{det bottom}). This isomorphism is such that   
\begin{equation}\label{eta def cons} \psi_\QQ(\eta^{\rm alg}) = \mathcal{L}^{\rm alg}_\Sigma\cdot (\bm{x}\otimes \bm{x})\end{equation}
for any choice of basis element $\bm{x}$ of ${\bigwedge}_\ZZ^{r} E(\QQ)_{\rm tf}$ (where the displayed equality follows directly from the explicit definition of $\eta^{\rm alg}$). 

%Let $\{x_1,\ldots,x_r\}$ be a basis of $E(\QQ)_{\rm tf}$. Then we have
%$$\psi_\QQ(\eta^{\rm alg})= \left(\prod_{\ell \mid pmN} \frac{\# E^{\rm ns}(\FF_\ell)}{\ell} \right)c(E)\cdot (x_1\wedge
%\cdots \wedge x_r \otimes x_1\wedge\cdots \wedge x_r) $$
%by the definition of $\eta^{\rm alg}$ (see \cite[(4.3.2)]{bks4}). 

The composite of $\varphi_\QQ$ and the map $\pi_{\QQ,f}$ that occurs in the lower row of the diagram in Proposition \ref{comm2}(ii) also induces a similar isomorphism of $\QQ_p$-spaces 
$$\pi_{\QQ,f}\circ \varphi_\QQ: {\det}_{\ZZ_p}^{-1}(\rgamma(\ZZ_\Sigma,T))\otimes_{\ZZ_p}\QQ_p \simeq \QQ_p\otimes_\ZZ \left({\bigwedge}_\ZZ^r E(\QQ)\otimes_\ZZ {\bigwedge}_\ZZ^r E(\QQ)\right).$$
In addition, if we write $\N_{F/\QQ}: E_1(F_p)\to E_1(\QQ_p)$ for the natural norm map, then the relation
$$\frac{\# E(\FF_p)}{p}\cdot{\rm log}_\omega(\N_{F/\QQ}(c_F))={\rm Tr}_{\QQ(\zeta_m)/\QQ}(\zeta_m)=(-1)^{\nu(m)}$$
can be used to show that 
$$\psi_\QQ = (-1)^{\nu(m)}\frac{\# E(\FF_p)}{p} \cdot (\pi_{\QQ,f}\circ \varphi_\QQ).$$
 
From the equality (\ref{eta def cons}), one can therefore deduce that 
$$ (\pi_{\QQ,f}\circ \varphi_\QQ)(\eta^{\rm alg})= (-1)^{\nu(m)} \left(\frac{\# E(\FF_p)}{p}\right)^{-1}\mathcal{L}^{\rm alg}_\Sigma\cdot (\bm{x}\otimes \bm{x}) = (-1)^{\nu(m)} \mathcal{L}^{\rm alg}_S\cdot (\bm{x}\otimes \bm{x}),$$
where the last equality is valid because both $\Sigma = S\cup \{p\}$ and $p\notin S$ and hence 
\[  \left(\frac{\# E(\FF_p)}{p}\right)^{\!\!-1}\!\!\mathcal{L}^{\rm alg}_\Sigma = \mathcal{L}^{\rm alg}\!\left(\frac{\# E(\FF_p)}{p}\right)^{\!\!-1}\!\!\left(\prod_{\ell \in \Sigma} 
\frac{\# E^{\rm ns}(\FF_\ell)}{\ell}\right) = \mathcal{L}^{\rm alg}\!\left(\prod_{\ell \in S} 
\frac{\# E^{\rm ns}(\FF_\ell)}{\ell}\right) = \mathcal{L}^{\rm alg}_S.\] 

The above equality in turn implies that 
\[ ({\rm Reg}_F \circ \pi_{\QQ,f}\circ \varphi_\QQ) ( \eta^{\rm alg}) = (-1)^{\nu(m)} \mathcal{L}^{\rm alg}_S\cdot {\rm Reg}_F(\bm{x}\otimes \bm{x})\]
and this implies the claimed result since the definition of $R_F$ implies directly that it is equal to ${\rm Reg}_F(\bm{x}\otimes \bm{x})$.  
%We identify $\QQ_p\otimes_{\ZZ_p} E_1(\QQ_p)^\ast$ with $\QQ_p$ by associating ${\N}_{F/\QQ}(c_F)$ with 1. We thus get an exact sequence
%0\to \QQ_p \to \QQ_p\otimes_\ZZ E(\QQ)^\ast \to H^2(\ZZ_\Sigma,V) \to 0.$$
\end{proof}

\subsection{Completion of the proof of Theorem \ref{main}}\label{completion section}

Upon combining the equality $\theta_{F,S}= \pi_{F,f}(\varphi_F(\fz_{F}))$ from Corollary \ref{corkato} with the commutativity of the diagram in Proposition \ref{comm2}(ii), one derives an equality
\begin{eqnarray}\label{theta1}
\theta_{F,S}=({\rm Reg}_F\circ \pi_{\QQ,f}\circ  \nu_f \circ \varphi_F)(\fz_{F}) \text{ in }I^r/I^{r+1}.
\end{eqnarray}

In addition, if one assumes $\fz_{\QQ}=\eta^{\rm alg}$, then the commutative diagram (\ref{comm3}) 
implies that  
$$(\nu_f \circ \varphi_F)(\fz_{F})=\varphi_\QQ(\fz_{\QQ})=\varphi_\QQ(\eta^{\rm alg})$$
and hence, by Lemma \ref{lemalg}, that 
\begin{equation}\label{theta2}
({\rm Reg}_F\circ \pi_{\QQ,f}\circ  \nu_f \circ \varphi_F)(\fz_{F}) = (-1)^{\nu(m)}\mathcal{L}^{\rm alg}_S\cdot R_F.
\end{equation}

Since the equalities (\ref{theta1}) and (\ref{theta2}) combine to imply the equality $\theta_{F,S}= (-1)^{\nu(m)}\mathcal{L}^{\rm alg}_S\cdot R_F$ that is predicted by Conjecture \ref{mtconj}, this argument therefore completes the proof of Theorem \ref{main} (and hence also, by Proposition \ref{main0}, of Theorem \ref{MainResult}), as required.

%\begin{remark}\label{last}
%We sketch a proof of the claim in Remark \ref{rem etnc}. Suppose that the equivariant Tamagawa number conjecture for $(h^1(E/F)(1),\ZZ_p[G])$ and the full Birch-Swinnerton-Dyer conjecture for $E/\QQ$ are true. Then there exists a unique $\ZZ_p[G]$-basis
%$$\widetilde \fz_{F} \in {\det}_{\ZZ_p[G]}^{-1}(\rgamma(\cO_{F,\Sigma},T))$$
%such that
%\begin{itemize}
%\item[(a')] $\vartheta_F(\widetilde \fz_F)= \sum_{\chi \in \widehat G} L^\ast(E,\chi^{-1},1)e_\chi$ and
%\item[(b')] $\nu(\widetilde \fz_F)=\eta^{\rm alg}$,
%\end{itemize}
%where $\vartheta_F: {\det}_{\ZZ_p[G]}^{-1}(\rgamma(\cO_{F,\Sigma},T)) \to \CC_p[G]$ is a certain period-regulator map (see \cite[\S 4.1]{bss2}). By the property (a'), we see that the element $\pi_F(\widetilde \fz_F)$ satisfies the same property as (\ref{katoexp}), so one can show that
%$$\theta_{F,S}=\pi_{F,f}(\varphi_F(\widetilde \fz_F))$$
%in the same way as Corollary \ref{corkato}. From this, one gets an analogue of (\ref{theta1}). An analogue of (\ref{theta2}) is obtained by using (b'), and the Mazur-Tate conjecture is deduced from it. 
%\end{remark}

%\begin{remark}
%Let $\fz_F$ be as in Remark \ref{last}. One expects that
%$$\pi_F(\widetilde \fz_F)=z_F,$$
%but it has not yet been proven. This condition is equivalent to \cite[Conj. 6.2]{bss2}. 
%\end{remark}

\subsection{The deduction of Corollary \ref{cor1}}\label{deduction cor1 section}

We finally explain the deduction of Corollary \ref{cor1} from the proof of Theorem \ref{MainResult}. To do this we assume the hypotheses of Corollary \ref{cor1} (but no longer require that $\#G$ is a prime power). 

Then, under these hypotheses, the main result of Jetchev, Skinner and Wan in \cite{JSW} implies that the quotient 
\[ u := \mathcal{L}^{\rm an}_S/\mathcal{L}^{\rm alg}_S\]
is a (non-zero) rational number that is coprime to every prime divisor of $\#G$. 

From Remark \ref{truncate remark} we also know that $u =1$ if and only if $E$ validates the Birch and Swinnerton-Dyer Conjecture over $\QQ$. 

Hence, from the discussion of \S\ref{somr section}, the result of Corollary \ref{cor1} will follow if we can show that for each prime divisor $p$ of $\#G$ the displayed equality in Conjecture \ref{mtconj} is valid after one multiplies the right hand side by $u$ (which acts invertibly on $I/I^2 \simeq G$). 

To do this we fix such a prime $p$. Then, since, by assumption, $E$ has square-free conductor and supersingular reduction at $p$, the validity of the Generalized Perrin-Riou Conjecture for $E$ and $\QQ_{\infty}/\QQ$ (Conjecture \ref{GPR20}) follows directly from Remark \ref{GPR r=1 rem} and the result \cite[Th. 2.4(iv)]{buyuk perrin} of B\"uy\"ukboduk. From the  commutative diagram in the proof of Proposition \ref{main0} it therefore follows that $\fz_\QQ = u\cdot \eta^{\rm alg}$. 

The latter equality then combines with the argument of \S\ref{completion section} to imply that the equality (\ref{theta2}) is unconditionally valid provided that one multiplies its right hand side by $u$ and, by comparing this fact to the 
equality (\ref{theta1}), we deduce that the displayed equality in Conjecture \ref{mtconj} is also unconditionally valid after one multiplies its right hand side by $u$, as required.  

This completes the proof of Corollary \ref{cor1}.


\begin{thebibliography}{99999999}
%
%
%%\bibitem{bassinj} H. Bass,
%%\newblock Injective Dimension in Noetherian Rings,
%%\newblock Trans. Amer. Math. Soc. {\bf 102} No. 1 (1962) 18-29.
%
%\bibitem{bassgorenstein} H. Bass,
%\newblock On the ubiquity of Gorenstein rings,
%\newblock Math. Z. {\bf 82} (1963) 8-28.
%
%
%%\bibitem[Bas68]{bass} H. Bass,
%%\newblock Algebraic $E$-theory,
%%\newblock Benjamin, New York, 1968.
%
%%\bibitem[Bei85]{beilinson} A. A. Beilinson,
%%\newblock Higher regulators and values of $L$-functions,
%%\newblock J. Soviet Math. {\bf 30} (1985) 2036-2070.

%\bibitem{beilinson} A. Beilinson, A. Levin,
%\newblock The elliptic polylogarithm, 
%\newblock in: U. Jannsen et al. (eds.): Motives, Proceedings Seattle 1991, Providence, RI: American Mathematical Society, Proc. Symp. Pure Math. {\bf 55} Pt. 2, (1994) 123-190. 

%\bibitem{benois} D. Benois,
%\newblock $p$-adic heights and $p$-adic Hodge theory,
%\newblock preprint, available at https://www.math.u-bordeaux.fr/~dbenoua/

%\bibitem{BB} D. Benois, K. B\"uy\"ukboduk, 
%\newblock On the exceptional zeros of $p$-non-ordinary $p$-adic $L$-functions and a conjecture of Perrin-Riou,
%\newblock preprint, arXiv:1510.01915v2. 

%\bibitem{BPR} D. Bernardi, B. Perrin-Riou, 
%\newblock Variante $p$-adique de la conjecture de Birch et Swinnerton-Dyer (le cas supersingulier), 
%\newblock C. R. Acad. Sci. Paris, {\bf 317} S\'er. I (1993) 227-232.

%\bibitem{BD} M. Bertolini, H. Darmon,
%\newblock Derived $p$-adic heights,
%\newblock Amer. J. Math. {\bf 117} no.6 (1995) 1517-1554.

\bibitem{BD} M. Bertolini, H. Darmon,
\newblock Kato's Euler system and rational points on elliptic curves I: a $p$-adic Beilinson formula,
\newblock Isr. J. Math. {\bf 199}(1) (2014) 163-188.

%\bibitem{BBJR} A. Besser, P. Buckingham, R. de Jeu, X.-F. Roblot,
%\newblock On the $p$-adic Beilinson Conjecture for Number Fields,
%\newblock Pure and Applied Mathematics Quarterly, Vol. 5, no.1 (2009) 375-434.
%
%%\bibitem[BSD60s]{BSD} B. J. Birch, H. P. F. Swinnerton-Dyer,
%%\newblock Notes on elliptic curves. I.,
%%\newblock J. Reine Angew. Math. \textbf{212} (1963) 7-25,
%%\newblock II,
%%\newblock J. Reine Angew. Math. \textbf{218} (1965) 79-108.
%
%%\bibitem[Ble06]{bley} W. Bley,
%%\newblock On the equivariant Tamagawa number conjecture for abelian
%%extensions of a quadratic imaginary field,
%%\newblock Doc. Math. \textbf{11} (2006) 73-118.

\bibitem{bleymodular} W. Bley,
\newblock The equivariant Tamagawa number conjecture and modular symbols,
\newblock Math. Ann. {\bf 356} no.1 (2013) 179-190.


%
%%\bibitem[BlBu02]{BlBHilbert} W. Bley, D. Burns,
%%\'Etale cohomology and a generalisation of Hilbert's Theorem 132,
%%Math. Z. \textbf{239} (2002) 1-25.
%
%%\bibitem[BlBu03]{BlBequiv} W. Bley, D. Burns,
%%\newblock Equivariant epsilon constants, discriminants and \'etale cohomology,
%%\newblock Proc. London Math. Soc. (3) \textbf{87} (2003) 545-590.
%
%\bibitem{BK} S. Bloch, K. Kato,
%\newblock $L$-functions and Tamagawa numbers of motives,
%\newblock in The Grothendieck Festschrift Vol I, Progress in Math. Vol 86, Birkh\"{a}user (1990) 333-400.
%
%%\bibitem[Bor74]{borel} A. Borel,
%%\newblock Stable real cohomology of arithmetic groups,
%%\newblock Ann. Sci. ENS {\bf 7} (1974) 235-272.
%
%%\bibitem[Bou80]{bourbaki} N. Bourbaki,
%%\newblock Alg\`ebre homologique,
%%\newblock Ch. X of `Alg\`ebre', Paris: Masson Publ. (1980)
%
%%\bibitem[BrBu05]{BBeuler} M. Breuning, D. Burns,
%%\newblock Additivity of Euler characteristics in relative algebraic $E$-groups,
%%\newblock Homology, Homotopy and Applications \textbf{7} No. 3 (2005) 11-36.
%
%%\bibitem[BrBu07]{BBs0s1} M. Breuning, D. Burns,
%%\newblock Leading terms of Artin $L$-functions at $s = 0$ and $s = 1$,
%%\newblock Compositio Math. \textbf{143} (2007) 1427-1464.
%
%%\bibitem[BrBu10]{BBequiv} M. Breuning, D. Burns,
%%\newblock On equivariant Dedekind zeta-functions at $s=1$,
%%\newblock Doc. Math., Extra Volume Suslin (2010) 119-146.
%
%%\bibitem{brown} K. S. Brown,
%%\newblock Cohomology of Groups,
%%\newblock Graduate Texts in Math. {\bf 87}, Springer, Berlin, 1982.
%
%%\bibitem[Bur01]{burnscompos} D. Burns,
%%\newblock Equivariant Tamagawa numbers and Galois module theory I,
%%\newblock Compos. Math., \textbf{129} (2001) 203-237.
%
%%\bibitem[Bur04]{burnswhite} D. Burns,
%%\newblock Equivariant Whitehead Torsion and Refined Euler Characteristics,
%%\newblock CRM Proceedings and Lecture Notes, vol. 36 (2004) 35-59.
%
%%\bibitem[Bu04b]{burnscurve} D. Burns,
%%\newblock On the values of equivariant Zeta functions of curves over finite fields,
%%\newblock Doc. Math., J. DMV \textbf{9} (2004) 357-399.
%
%%\bibitem[Bur07]{burnscong} D. Burns,
%%\newblock Congruences between derivatives of abelian $L$-functions at $s = 0$,
%%\newblock Invent. math. $\mathbf{169}$ (2007) 451-499.
%
%%\bibitem{nasc} D. Burns,
%%\newblock On refined Stark Conjectures in the
%%non-abelian case,
%%\newblock Math. Res. Lett. $\mathbf{15}$ (2008) 841-856.
%
%%\bibitem{pnp} D. Burns,
%%\newblock Perfecting the nearly perfect,
%%\newblock Pure and Applied Math. Q. {\bf 4} (2008) 1041-1058 (Jean-Pierre Serre Special Issue, Part I).
%
%%\bibitem[Bur11a]{burnsgeom} D. Burns,
%%\newblock Congruences between derivatives of geometric $L$-functions,
%%\newblock Invent. math. $\mathbf{184}$ (2011) 221-256.
%
%%\bibitem[Bur11b]{burnsder} D. Burns,
%%\newblock On derivatives of Artin $L$-series,
%%\newblock Invent. math. $\mathbf{186}$ (2011) 291-371.
%
%%\bibitem[Bur]{burnsmainconj} D. Burns,
%%\newblock On main conjectures of geometric Iwasawa theory and related conjectures,
%%\newblock submitted for publication.
%
%%\bibitem[Burb]{burnspad} D. Burns,
%%\newblock On derivatives of $p$-adic $L$-series at $s =0$,
%%\newblock submitted for publication.
%
%
%
%%\bibitem[BuFl96]{BFmotivic} D. Burns, M. Flach,
%%\newblock Motivic $L$-functions and Galois module structures,
%%\newblock Math. Ann. \textbf{305} (1996) 65-102.
%
%\bibitem{BFongal} D. Burns, M. Flach,
%\newblock On Galois structure invariants associated to Tate motives,
%\newblock Amer. J. Math. \textbf{120} (1998) 1343-1397.
%
%\bibitem{BFetnc} D. Burns, M. Flach,
%\newblock Tamagawa numbers for motives with (non-commutative) coefficients,
%\newblock Doc.\ Math.\ \textbf{6} (2001) 501-570.
%
%\bibitem{BFetnc2} D. Burns, M. Flach,
%\newblock Tamagawa numbers for motives with (non-commutative) coefficients II,
%\newblock Amer. J. Math. \textbf{125} (2003) 475-512.
%
%%\bibitem[BuFl06]{BFtatemotive} D. Burns, M. Flach,
%%\newblock On the equivariant Tamagawa number conjecture for Tate motives, Part II,
%%\newblock Documenta Math., Extra volume: John H. Coates' Sixtieth Birthday (2006) 133-163.
%
%%\bibitem[BuGr03]{BG} D. Burns, C. Greither,
%%\newblock On the Equivariant Tamagawa Number Conjecture for Tate motives,
%%\newblock Invent. math. $\mathbf{153}$ (2003) 303-359.
%

%
%%\bibitem{bks2-2} D. Burns, M. Kurihara, T. Sano,
%%\newblock On Stark elements of arbitrary weight and their $p$-adic families,
%%\newblock submitted for publication, arXiv:1607.06607v1.
%
%%\bibitem{BMC} D. Burns, D. Macias Castillo,
%%\newblock Organizing Matrices for Arithmetic Complexes,
%%\newblock Int. Math. Res. Not. \textbf{10} (2014) 2814-2883.

\bibitem{bmc} D. Burns, D. Macias Castillo,
\newblock On refined conjectures of Birch and Swinnerton-Dyer type for Hasse-Weil-Artin $L$-series,
\newblock preprint, arXiv:1909.03959.


%\bibitem{bg2} D. Burns, C. Greither,
%\newblock Equivariant Weierstrass Preparation and values of $L$-functions at negative integers,
%\newblock Doc. Math., Extra volume: Kazuya Kato's Fiftieth Birthday (2003) 157-185.

\bibitem{bks1} D. Burns, M. Kurihara, T. Sano,
\newblock On zeta elements for $\mathbb{G}_m$,
\newblock Doc. Math. \textbf{21} (2016) 555-626.

\bibitem{bks2} D. Burns, M. Kurihara, T. Sano,
\newblock On Iwasawa theory, zeta elements for $\mathbb{G}_m$ and the equivariant Tamagawa number conjecture,
\newblock Algebra \& Number Theory {\bf 11} (2017) 1527-1571.

\bibitem{bks2-2} D. Burns, M. Kurihara, T. Sano,
\newblock On Stark elements of arbitrary weight and their $p$-adic families,
\newblock Advanced Studies in Pure Mathematics {\bf 86}, 
\newblock Development of Iwasawa Theory -- the Centennial of K. Iwasawa's Birth, (2020) 113-140.%arXiv:1607.06607v1.

\bibitem{bks4} D. Burns, M. Kurihara, T. Sano,
\newblock On derivatives of Kato's Euler system for elliptic curves, 
\newblock preprint, arXiv:1910.07404.


\bibitem{bss} D. Burns, R. Sakamoto, T. Sano,
\newblock On the theory of higher rank Euler, Kolyvagin and Stark systems, II: the general theory,
\newblock preprint, arXiv:1805.08448.

\bibitem{bss2} D. Burns, R. Sakamoto, T. Sano,
\newblock On the theory of higher rank Euler, Kolyvagin and Stark systems, III: applications,
\newblock preprint, arXiv:1902.07002.

\bibitem{sbA} D. Burns, T. Sano,
\newblock On the theory of higher rank Euler, Kolyvagin and Stark systems,
\newblock to appear in Int. Math. Res. Not.

\bibitem{bst} D. Burns, T. Sano, K.-W. Tsoi,
\newblock On higher special elements of $p$-adic representations,
\newblock to appear in Int. Math. Res. Not. 

%\bibitem{Buyuk} K. B\"uy\"ukboduk,
%\newblock Kolyvagin systems of Stark units,
%\newblock J. reine u. Angew. Math. {\bf 631} (2009) 85-107.

%\bibitem{Buyuk lambda} K. B\"uy\"ukboduk,
%\newblock $\Lambda$-adic Kolyvagin Systems,
%\newblock  Int. Math. Res. Not. IMRN  2011,  no. {\bf 14}, 3141--3206.
\bibitem{buyuk perrin} K. B\"uy\"ukboduk,
\newblock Beilinson-Kato and Beilinson-Flach elements, Coleman-Rubin-Stark classes, Heegner points and a Conjecture of Perrin-Riou,
\newblock Advanced Studies in Pure Mathematics {\bf 86}, 
\newblock Development of Iwasawa Theory -- the Centennial of K. Iwasawa's Birth, (2020) 141-193.%preprint, arXiv:1511.06131v3.

%\bibitem{buyuk height} K. B\"uy\"ukboduk,
%\newblock Height pairings, exceptional zeros and Rubin's formula: the multiplicative group,
%\newblock Comment. Math. Helv. {\bf 87} (2012) 71-111.

%\bibitem{buyukexc} K. B\"uy\"ukboduk, 
%\newblock On Nekov\'a\v r's heights, exceptional zeros and a conjecture of Mazur-Tate-Teitelbaum,
%\newblock Int. Math. Res. Not. IMRN {\bf 7} (2016) 2197-2237.


%\bibitem{BL1} K. B\"uy\"ukboduk, A. Lei,
%\newblock Coleman-adapted Rubin-Stark Kolyvagin systems and supersingular Iwasawa theory of CM abelian varieties,
%\newblock Proc. London Math. Soc. (3) {\bf 111} (2015) 1338-1378.

%\bibitem{BL2} K. B\"uy\"ukboduk, A. Lei,
%\newblock Integral Iwasawa theory of Galois representations for non-ordinary primes,
%\newblock Math. Z. {\bf 286} (2017) 361-398.

\bibitem{BPS} K. B\"uy\"ukboduk, R. Pollack, S. Sasaki,
\newblock $p$-adic Gross-Zagier formula at critical slope and a conjecture of Perrin-Riou - I,
\newblock preprint, arXiv:1811.08216v3.



%\bibitem{cogr} P. Cornacchia, C. Greither,
%\newblock Fitting ideals of class groups of real fields with prime power conductor,
%\newblock J. Number Th. {\bf 73} (1998), 459-471.

%
%%\bibitem[BuVe11]{BV} D. Burns, O. Venjakob,
%%\newblock On descent theory and main conjectures in non-commutative Iwasawa theory,
%%\newblock J. Inst. Math. Jessieu 10 (2011) 59-118.
%
%%\bibitem{CF} J. W. S. Cassels, A. Fr\"ohlich,
%%\newblock Algebraic Number Theory,
%%\newblock Academic Press, 1967.
%
%%\bibitem[Chi83]{chinburgsunit} T. Chinburg,
%%\newblock On the Galois structure of algebraic integers and $S$-units,
%%\newblock Invent. Math. \textbf{74} (1983) 321-349.
%
%%\bibitem[Chi85]{chinburgexact} T. Chinburg,
%%\newblock Exact sequences and Galois module structure,
%%\newblock Annals of Math. \textbf{121} (1985)
%%351-376.
%
%%\bibitem[CKPS98]{CKPS} T. Chinburg, M. Kolster, G. Pappas, V. Snaith,
%%\newblock Galois structure of $E$-groups of rings of integers,
%%\newblock $E$-theory {\bf 14} (1998) 319-369.
%
%%\bibitem[CoLi73]{CL} J. Coates, S. Lichtenbaum,
%%\newblock On $l$-adic zeta functions,
%%\newblock Annals of Math., Second Series, \textbf{98} No. 3 (1973) 498-550.
%
%%\bibitem{CK} I. S. Cohen, I. Kaplansky,
%%\newblock Rings for which every module is a direct sum of cyclic modules,
%%\newblock Mathematische Zeitschrift {\bf 54} (1951) 97-101.
%
%%\bibitem[Col81]{coleman} R. Coleman,
%%\newblock The dilogarithm and the norm residue symbol,
%%\newblock Bull. Soc. Math. France {\bf 109} (1981) 373-402.
%
%\bibitem{CR} C. W. Curtis, I. Reiner,
%\newblock Methods of Representation Theory, Vol. I and II,
%\newblock John Wiley and Sons, New York, 1987.

%\bibitem{darmonthesis} H. Darmon, 
%\newblock Refined class number formulas for derivatives of $L$-series, 
%\newblock PhD thesis, Harvard University (1991).


%\bibitem{DH} H. Darmon,
%\newblock A refined conjecture of Mazur-Tate type for Heegner points,
%\newblock Invent. Math. {\bf 110} no. 1 (1992) 123-146. 

\bibitem{darmoneuler} H. Darmon, 
\newblock Euler systems and refined conjectures of Birch and Swinnerton-Dyer type, 
\newblock Contemp. Math. {\bf 165} (1994) 265-276.


%
%\bibitem{D} H. Darmon,
%\newblock Thaine's method for circular units and a conjecture of Gross,
%\newblock Canadian J. Math., \textbf{47} (1995) 302-317.
%
%%\bibitem{DDP} H. Darmon, S. Dasgupta, R. Pollack,
%%\newblock Hilbert modular forms and the Gross-Stark conjecture,
%%\newblock Ann. Math.,  \textbf{174} (2011) 439-484.
%
%%\bibitem[Del79]{deligneconj} P. Deligne,
%%\newblock Valeurs de fonctions $L$ et p\'eriods d'int\'egrales,
%%\newblock Proc. Sym. Pure Math. \textbf{33} (2), (1979) 313-346.
%
%%\bibitem[Del87]{delignedet} P. Deligne,
%%\newblock Le d\'eterminant de la cohomologie,
%%\newblock in: `Current Trends in Arithmetical Algebraic
%%Geometry', Contemp. Math. \textbf{67} 313-346, Amer. Math. Soc., 1987.

%\bibitem{deninger1} C. Deninger,
%\newblock Higher regulators and Hecke $L$-series of imaginary quadratic fields I, 
%\newblock Invent. Math. {\bf 96} (1989) 1-69.

%\bibitem{deninger2} C. Deninger,
%\newblock Higher regulators and Hecke $L$-series of imaginary quadratic fields II, 
%\newblock Ann. Math. {\bf 132}(2) (1990) 131-155.


%
%%\bibitem{DKR} Yu. A. Drozd, V. V. Kirichenko, A. V. Roiter,
%%\newblock On hereditary and Bass orders,
%%\newblock Izv. Akad. Nauk S.S.S.R. {\bf 31} (1967) 1415-1436.
%
%%\bibitem{EN} S. Eilenberg, T. Nakayama,
%%\newblock On the dimension of modules and algebras, II,
%%\newblock Nagoya Math. J. {\bf 9} (1955) 1-16.
%
%
%%\bibitem{EP} C. Emmons, C. Popescu,
%%\newblock Special Values of Abelian L-functions at $s=0$,
%%\newblock J. Number Theory  {\bf 129} (2009) 1350-1365.
%
%%\bibitem{E} E. E. Enochs,
%%\newblock Injective and flat covers, envelopes and resolvents,
%%\newblock Israeli J. Math. {\bf 39} (1981) 189-209.
%
%%\bibitem{evans} E. G. Evans,
%%\newblock Krull-Schmidt and cancellation over local rings,
%%\newblock Pacific J. Math. {\bf 46} No.1 (1973) 115-121.
%
%%\bibitem[FeWa79]{FW} B. Ferrero, L. Washington,
%%\newblock The Iwasawa invariant $\mu_p$ vanishes for abelian number fields,
%%\newblock Ann. Math. {\bf 109} (1979), 377-395.
%
%%\bibitem[Fla00]{flacheuler} M. Flach,
%%\newblock Euler characteristics in relative $E$-groups,
%%\newblock Bull. London Math. Soc. \textbf{32} (2000) 272-284.
%
%%\bibitem[Fla04]{flachsurvey} M. Flach,
%%\newblock The equivariant Tamagawa number conjecture - A survey,
%%\newblock in: `Stark's Conjecture: recent progress and new directions' (eds. D. Burns, C. Popescu, J. Sands and D. Solomon), Contemp. Math. \textbf{358} 79-126, Amer. Math. Soc., 2004.
%
%%\bibitem[Fla11]{flach2part} M. Flach,
%%\newblock On the cyclotomic main conjecture for the prime $2$,
%%\newblock J. reine angew. Math. $\mathbf{661}$ (2011) 1-36.
%
%\bibitem{fpr} J.-M. Fontaine, B. Perrin-Riou,
%\newblock Autour des conjectures de Bloch et Kato: cohomologie galoisienne et valeurs de fonctions $L$,
%\newblock In: Motives (Seattle) Proc. Symp. Pure Math. \textbf{55}, I, (1994) 599-706.
%
%\bibitem{af} A. Fr\"ohlich,
%\newblock Invariants for modules over commutative separable orders,
%\newblock Quart. J. Math. Oxford  {\bf 16} (1965) 193-232.


%%\bibitem[Fr\"o83]{frohlich} A. Fr\"ohlich,
%%\newblock Galois module structure of algebraic integers,
%%\newblock Springer, Berlin, 1983.
%
%\bibitem{FK} T. Fukaya, K. Kato,
%\newblock A formulation of conjectures on $p$-adic zeta functions in non-commutative Iwasawa theory,
%\newblock Proc. St. Petersburg Math. Soc. {\bf XII} (2006) 1-86.

%\bibitem{greenberg} R. Greenberg, 
%\newblock Iwasawa theory for $p$-adic representations,
%\newblock In Algebraic number theory, volume 17 of Adv. Stud. Pure Math., pp 97-137. Academic Press, Boston, MA, 1989.



%\bibitem{gs} R. Greenberg, G. Stevens, 
%\newblock $p$-adic $L$-functions and $p$-adic periods of modular forms, 
%\newblock Invent. Math. {\bf 111} (2) (1993) 407-447.


%
%%\bibitem[GrKu\v c08]{GK1}  C. Greither, R. Ku\v cera,
%%\newblock On a conjecture concerning minus parts in the style of Gross,
%%\newblock Acta Arith. {\bf 132} (2008) 1-48.
%
%%\bibitem[GrKu\v c09]{GK2}  C. Greither, R. Ku\v cera,
%%\newblock The Minus Conjecture revisited,
%%\newblock J. reine angew. Math. \textbf{632} (2009) 127-142.
%
%%\bibitem[GrKur08]{GK} C. Greither, M. Kurihara,
%%\newblock Stickelberger elements, Fitting ideals of class groups of CM fields, and dualisation, \newblock Math. Zeitschrift \textbf{260} (2008), 905-930.
%
%%\bibitem{GreitherPopescu} C. Greither, C. Popescu,
%%\newblock An Equivariant Main Conjecture in Iwasawa Theory and Applications
%%\newblock preprint (2011)
%
%\bibitem{Gp} B. H. Gross,
%\newblock On $p$-adic $L$-series at $s=0$,
%\newblock J. Fac. Sci. Univ. Tokyo, Sect. IA, \textbf{28} (1982) 979-994.
%
%\bibitem{G} B. H. Gross,
%\newblock On the value of abelian $L$-functions at $s =0$,
%\newblock J. Fac. Sci. Univ. Tokyo, Sect. IA, \textbf{35} (1988) 177-197.
%
%%\bibitem[GRW99]{GRW} K. W. Gruenberg, J. Ritter, A. Weiss,
%%\newblock A Local Approach to Chinburg's Root Number Conjecture,
%%\newblock Proc. London Math. Soc. {\bf 79} (1999) 47-80.

%\bibitem{guo} L. Guo,
%\newblock Hecke characters and formal group characters, 
%\newblock Topics in number theory (University Park, PA, 1997), Math. Appl. {\bf 467}, Kluwer Acad. Publ. (1999) 181-192.

%\bibitem{han} B. Han,
%\newblock On Bloch-Kato conjecture of Tamagawa numbers for Hecke characters of imaginary quadratic number field,
%\newblock Ph.D. thesis, Chicago University, 1997. 


%
%%\bibitem[Har11]{hara} T. Hara,
%%Inductive construction of the $p$-adic Zeta functions for non-commutative $p$-extensions
%%of exponent $p$ of totally real fields,
%%\newblock Duke Math. J. \textbf{158} no. 2 (2011)  247-305
%
%%\bibitem[Hay04]{hayward} A. Hayward,
%%\newblock A class number formula for higher derivatives of abelian
%%$L$-functions,
%%\newblock Compositio Math. {\bf 140} (2004) 99-129.
%
%%\bibitem[HiSt71]{hilton} P. J. Hilton, U. Stammbach,
%%\newblock A Course in Homological Algebra,
%%\newblock Graduate Texts in Math., vol. 4,
%%Springer-Verlag, New York, 1971.

%\bibitem{howard} B. Howard, 
%\newblock Derived $p$-adic heights and $p$-adic $L$-functions, 
%\newblock Amer. J. Math. {\bf 126} (2004) 1315-1340.


%\bibitem{HK} A. Huber, G. Kings
%\newblock Degeneration of $l$-adic Eisenstein classes and of the elliptic poylog, 
%\newblock Invent. math. {\bf 135} (1999) 545-594.

%
%%\bibitem[HK03]{HK} A. Huber, G. Kings,
%%\newblock Bloch-Kato conjecture and main conjecture of Iwasawa theory for Dirichlet characters,
%%\newblock Duke. Math. J. {\bf 119} (2003) 393-464.
%
%%\bibitem[HW98]{HW} A. Huber, J. Wildeshaus,
%%\newblock Classical motivic polylogarithm according to Beilinson and Deligne,
%%\newblock Doc. Math. \textbf{3} (1998) 27-133.
%
%%\bibitem[Iwa68]{iwasawa} K. Iwasawa,
%%\newblock Explicit formulas for the norm residue symbols,
%%\newblock J. Math. Soc. Japan, (1968) 151-164.
%
%
%%\bibitem{jannsen} U. Jannsen,
%%\newblock Iwasawa modules up to isomorphism,
%%\newblock Advanced Studies in Pure Mathematics {\bf 17} (1989) 171-207.

\bibitem{JSW} D. Jetchev, C. Skinner, X. Wan,
\newblock The Birch and Swinnerton-Dyer Formula for elliptic curves of analytic rank one,
\newblock Camb. J. Math. {\bf 5} (2017) 369-434.
%
%%\bibitem[Joh13]{johnson} J. Johnson-Leung,
%%\newblock The local equivariant Tamagawa number conjecture for almost abelian extensions,
%%\newblock Women in Numbers 2: Research Directions in Number Theory, Contemporary Mathematics, vol. 606 (2013) 1-27.
%
%%\bibitem[JoKi11]{JLK} J. Johnson-Leung, G. Kings,
%%\newblock On the equivariant main conjecture for imaginary quadratic fields,
%%\newblock J. Reine Angew. Math. \textbf{653} (2011) 75-114.
%
%
%%\bibitem[Jon07]{jones} A. Jones,
%%\newblock Dirichlet $L$-functions at $s=1$,
%%\newblock PhD thesis, King's College London, (2007).
%
%
%%\bibitem[Kak13]{kakde} M. Kakde,
%%\newblock The main conjecture of Iwasawa theory for totally real fields,
%%\newblock Invent. Math., \textbf{193}, no.3, (2013) 539-626.
%
%


\bibitem{kataoka} T. Kataoka,
\newblock Stark systems and equivariant main conjectures,
\newblock preprint. 


%%\bibitem{kato} K. Kato,
%%\newblock Iwasawa theory and $p$-adic Hodge theory,
%%\newblock Kodai Math. J. {\bf 16} no. 1 (1993) 1-31.
%
%\bibitem{katoeuler} K. Kato,
%\newblock Euler systems, Iwasawa theory, and Selmer groups,
%\newblock Kodai Math. J. {\bf 22} (1999) 313-372.
%
%\bibitem{katolecture} K. Kato,
%\newblock Lectures on the approach to Iwasawa theory of Hasse-Weil $L$-functions via $B_{\rm dR}$, Part I,
%\newblock In: Arithmetical Algebraic Geometry (ed. E. Ballico), Lecture Notes in Math. 1553 (1993) 50-163, Springer, New York, 1993.
%
%\bibitem{katolecture2} K. Kato,
%\newblock Lectures on the approach to Iwasawa theory for Hasse-Weil $L$-functions via $B_{\rm dR}$, Part II. Local main conjecture,
%\newblock preprint (1993).
%

\bibitem{katoasterisque} K. Kato,
\newblock $p$-adic Hodge theory and values of zeta functions of modular forms,
\newblock Ast\'erisque, (295):ix, 117-290, 2004. Cohomologies $p$-adiques et applications arithm\'etiques. III.

%\bibitem{kimura} K. Kimura, 
%\newblock Special values of Hecke $L$-functions of imaginary quadratic fields and explicit reciprocity
%law (in Japanese), 
%\newblock Master's thesis, University of Tokyo, 1993.

%\bibitem{kingsTNC} G. Kings,
%\newblock The Tamagawa number conjecture for CM elliptic curves,
%\newblock Invent. math. {\bf 143} (2001) 571-627.

%\bibitem{kings} G. Kings,
%\newblock The equivariant Tamagawa number conjecture and the Birch-Swinnerton-Dyer conjecture,
%\newblock in: Arithmetic of $L$-functions, IAS/Park City Math. Ser. \textbf{18}, Amer. Math. Soc., Providence, RI, (2011) 315-349.

%\bibitem{kobayashi} S. Kobayashi, 
%\newblock Iwasawa theory for elliptic curves at supersingular primes, 
%\newblock Invent. Math. {\bf 152} no.1 (2003) 1-36.


%\bibitem{kobayashi} S. Kobayashi, 
%\newblock An elementary proof of the Mazur-Tate-Teitelbaum conjecture for elliptic curves, 
%\newblock Doc. Math. (Extra Vol.) (2006) 567-575.


%\bibitem{kobss} S. Kobayashi,
%\newblock The $p$-adic Gross-Zagier formula for elliptic curves at supersingular primes,
%\newblock Invent. math. {\bf 191} (2013) 527-629.

%\bibitem{koly} V. A. Kolyvagin,
%\newblock Euler systems,
%\newblock The Grothendieck Festschrift Vol II (1990) 435-483.


\bibitem{kuriharass} M. Kurihara,
\newblock On the Tate Shafarevich groups over cyclotomic fields of an elliptic curve 
with supersingular reduction I,
\newblock Invent. math. {\bf 149} (2002) 195-224.


%\bibitem{kurihara} M. Kurihara,
%\newblock Refined Iwasawa theory and Kolyvagin systems of Gauss sum type,
%\newblock Proc. London Math. Soc. (3) {\bf 104} (2012) 728-769.

%\bibitem{KN} C.-H. Kim, K. Nakamura,
%\newblock Remarks on Kato's Euler systems for elliptic curves with additive reduction,
%\newblock preprint, arXiv:1808.07726v1

%%\bibitem[Kim03]{kim} S. Y. Kim,
%%\newblock On the Equivariant Tamagawa Number Conjecture for
%%Quaternion Fields,
%%\newblock Ph. D. thesis, King's College London, 2003.
%
%%\bibitem[Kin11]{kings} G. Kings,
%%\newblock The equivariant Tamagawa number conjecture and the Birch-Swinnerton-Dyer conjecture,
%%\newblock in: Arithmetic of $L$-functions, IAS/Park City Math. Ser. \textbf{18}, Amer. Math. Soc., Providence, RI, (2011) 315-349.
%
%%\bibitem[KnMu76]{KM} F. Knudsen, D. Mumford,
%%\newblock The projectivity of the moduli space of stable curves I: Preliminaries on `det' and `Div',
%%\newblock Math. Scand. {\bf 39} (1976) 19-55.
%

%
%%\bibitem[Kur11]{kurihara} M. Kurihara,
%%\newblock On stronger versions of Brumer's conjecture,
%%\newblock Tokyo Journal of Math. {\bf 34} (2011) 407-428.
%
%%\bibitem[KuMi12]{kuriharamiura} M. Kurihara, T. Miura,
%%\newblock Ideal class groups of CM-fields with non-cyclic Galois action,
%%\newblock Tokyo Journal of Math. {\bf 35} (2012) 411-439.
%
%%\bibitem{L2} S. Lichtenbaum,
%%\newblock The Weil-\'etale topology on schemes over finite fields,
%%\newblock  Compositio Math. {\bf 141} (2005) 689-702.
%
%%\bibitem{L3} S. Lichtenbaum,
%%\newblock The Weil-\'etale topology for number rings,
%%\newblock Ann. Math. {\bf 170} (2009), 657-683.
%
%\bibitem{MRkoly} B. Mazur, K. Rubin,
%\newblock Kolyvagin systems,
%\newblock Mem. Amer. Math. Soc. \textbf{799} (2004).
%
%\bibitem{MR} B. Mazur, K. Rubin,
%\newblock Refined class number formulas and Kolyvagin systems,
%\newblock Compos. Math. $\mathbf{147}$ (2011) 56-74.
%
%\bibitem{MRselmer} B. Mazur, K. Rubin,
%\newblock Controlling Selmer groups in the higher core rank case,
%\newblock J. Th. Nombres Bordeaux {\bf 28} (2016) 145-183.
%
%\bibitem{MRGm} B. Mazur, K. Rubin,
%\newblock Refined class number formulas for $\GG_{m}$,
%\newblock J. Th. Nombres Bordeaux {\bf 28} (2016) 185-211.
%
\bibitem{MT} B. Mazur, J. Tate,
\newblock Refined Conjectures of the Birch and Swinnerton-Dyer Type,
\newblock Duke Math. J.  {\bf 54} (1987) 711-750.

%\bibitem{MTT} B. Mazur, J. Tate, J. Teitelbaum,
%\newblock On $p$-adic analogues of the conjectures of Birch and Swinnerton-Dyer,
%\newblock Invent. Math. {\bf 84}(1) (1986) 1-48.

%
%%\bibitem[MaWi84]{MW} B. Mazur, A. Wiles,
%%\newblock Class fields of abelian extensions of $\QQ$,
%%\newblock Invent. Math. \textbf{76} no.2 (1984) 179-330.
%
%\bibitem{milne} J. S. Milne,
%\newblock Arithmetic Duality Theorems,
%\newblock Perspectives in Mathematics {\textbf 1}, Academic Press, 1986.
%
%%\bibitem[Mil71]{milnor} J. Milnor,
%%\newblock Introduction to algebraic $E$-theory,
%%\newblock Ann. Math. Studies \textbf{72}, Princeton Univ. Press, 1971.
%
%%\bibitem[\UTF{00E4}\UTF{017E}\UTF{0089}\UTF{00E6}\UTF{00B5}\UTF{0160}12]{miura} \UTF{00E4}\UTF{017E}\UTF{0089}\UTF{00E6}\UTF{00B5}\UTF{0160}\UTF{00E5}\UTF{017D}\UTF{0081}E\UTF{0152}\UTF{0081}E%%\newblock Brumer-Stark\UTF{00E4}\UTF{00BA}\UTF{0088}\UTF{00E6}\UTF{0083}\UTF{00B3}\UTF{0081}E\UTF{0081}E%%\newblock \UTF{00E7}¬¬\UTF{0081}E\UTF{0092}\UTF{00EF}\UTF{0152}\UTF{0090}\UTF{00E5}\UTF{009B}\UTF{009E}\UTF{00E6}\UTF{0095}\UTF{017D}\UTF{00E6}\UTF{0095}°\UTF{00E8}\UTF{00AB}\UTF{0096}\UTF{00E3}\UTF{0082}\UTF{00B5}\UTF{00E3}\UTF{0083}\UTF{009E}\UTF{00E3}\UTF{0081}E\UTF{00E3}\UTF{0082}\UTF{00B9}\UTF{00E3}\UTF{0082}\UTF{00AF}\UTF{00E3}\UTF{0083}\UTF{0152}\UTF{00E3}\UTF{0083}\UTF{00AB}\UTF{00E3}\UTF{0080}\UTF{008C}Stark\UTF{00E4}\UTF{00BA}\UTF{0088}\UTF{00E6}\UTF{0083}\UTF{00B3}\UTF{00E3}\UTF{0080}\UTF{008D}\UTF{00E5}?±\UTF{00E5}\UTF{0091}\UTF{008A}\UTF{00E9}\UTF{009B}\UTF{0086} (2012).
%
%%\bibitem{tejaswi} T. Navilarekallu,
%%\newblock On the equivariant Tamagawa number conjecture for $A_4$-extensions of number fields,
%%\newblock J. Number Theory {\bf 121} (2006) 67-89.
%

%\bibitem{nekovar} J. Nekov\'a\v r, 
%\newblock Kolyvagin's method for Chow groups of Kuga-Sato varieties, 
%\newblock Invent. Math. {\bf 107} (1992) 99-125.

%\bibitem{nekp} J. Nekov\'a\v r, 
%\newblock On $p$-adic height pairings, 
%\newblock In: S\'eminaire de Th\'eorie des Nombres, Paris, 1990-1991. Progr. Math., No. 108, 127-202. Birkh\"auser Boston, Boston (1993).

%\bibitem{nekovar} J. Nekov\'a\v r, 
%\newblock Selmer complexes,
%\newblock Ast\'erisque {\bf 310} (2006).

%\bibitem{NSW} J. Neukirch, A. Schmidt, K. Wingberg,
%\newblock Cohomology of number fields,
%\newblock Springer Verlag, 2000.
%
%%\bibitem[Nic11]{nickel} A. Nickel,
%%\newblock On non-abelian Stark-type conjectures,
%%\newblock Ann. Inst. Fourier \textbf{61} no. 6 (2011) 2577-2608.
%
%%\bibitem[\UTF{00E9}\UTF{0087}\UTF{008E}\UTF{00E6}\UTF{009D}\UTF{0091}12]{nomura}\UTF{00E9}\UTF{0087}\UTF{008E}\UTF{00E6}\UTF{009D}\UTF{0091}\UTF{00E6}¬\UTF{00A1}\UTF{00E9}\UTF{0083}\UTF{008E}\UTF{00EF}\UTF{0152}\UTF{0081}E%%\newblock \UTF{00E9}\UTF{009D}\UTF{009E}\UTF{00E5}\UTF{008F}\UTF{00AF}\UTF{00E6}\UTF{008F}\UTF{009B}Brumer\UTF{00E4}\UTF{00BA}\UTF{0088}\UTF{00E6}\UTF{0083}\UTF{00B3}\UTF{00E3}\UTF{0081}\UTF{0161}\UTF{00E9}\UTF{009D}\UTF{009E}\UTF{00E5}\UTF{008F}\UTF{00AF}\UTF{00E6}\UTF{008F}\UTF{009B}Brumer-Stark\UTF{00E4}\UTF{00BA}\UTF{0088}\UTF{00E6}\UTF{0083}\UTF{00B3}\UTF{0081}E\UTF{0081}E%%\newblock \UTF{00E7}¬¬\UTF{0081}E\UTF{0092}\UTF{00EF}\UTF{0152}\UTF{0090}\UTF{00E5}\UTF{009B}\UTF{009E}\UTF{00E6}\UTF{0095}\UTF{017D}\UTF{00E6}\UTF{0095}°\UTF{00E8}\UTF{00AB}\UTF{0096}\UTF{00E3}\UTF{0082}\UTF{00B5}\UTF{00E3}\UTF{0083}\UTF{009E}\UTF{00E3}\UTF{0081}E\UTF{00E3}\UTF{0082}\UTF{00B9}\UTF{00E3}\UTF{0082}\UTF{00AF}\UTF{00E3}\UTF{0083}\UTF{0152}\UTF{00E3}\UTF{0083}\UTF{00AB}\UTF{00E3}\UTF{0080}\UTF{008C}Stark\UTF{00E4}\UTF{00BA}\UTF{0088}\UTF{00E6}\UTF{0083}\UTF{00B3}\UTF{00E3}\UTF{0080}\UTF{008D}\UTF{00E5}?±\UTF{00E5}\UTF{0091}\UTF{008A}\UTF{00E9}\UTF{009B}\UTF{0086} (2012).
%
%%\bibitem[Nor76]{north} D. G. Northcott,
%%\newblock Finite free resolutions,
%%\newblock Cambridge Univ. Press, Cambridge New York 1976.

\bibitem{ota} K. Ota, 
\newblock Kato's Euler system and the Mazur-Tate refined conjecture of BSD type,
\newblock American J. Math., {\bf 140} no.2 (2018) 495-542. 

\bibitem{otsuki} R. Otsuki,
\newblock Construction of a Homomorphism Concerning Euler Systems for an Elliptic Curve,
\newblock Tokyo. J. Math. {\bf 32} no.1 (2009) 253-278.

%\bibitem{PR87} B. Perrin-Riou, 
%\newblock Points de Heegner et d\'eriv\'ees de fonctions $L$ $p$-adiques,
%\newblock Invent. Math. {\bf 89}(3) (1987) 455-510.

\bibitem{PR} B. Perrin-Riou,
\newblock Fonctions $L$ $p$-adiques d'une courbe elliptique et points rationnels,
\newblock Ann. Inst. Fourier (Grenoble) {\bf 43} no.4 (1993) 945-995.


%
%%\bibitem[Per94]{PRinvent} B. Perrin-Riou,
%%Th\' eorie d'Iwasawa des repr\' esentations $p$-adiques sur un corps local,
%%Invent. Math. \textbf{115} (1994) 81-161.
%
%%\bibitem[Per95]{PRast} B. Perrin-Riou,
%%Fonctions $L$ $p$-adiques des repr\' esentations $p$-adique,
%%Ast\'erisque \textbf{229}, Soc. Math. France (1995).
%
%\bibitem{PR} B. Perrin-Riou,
%\newblock Syst\`emes d'Euler $p$-adiques et th\'eorie d'Iwasawa,
%\newblock Ann. Inst. Fourier (Grenoble) {\bf 48} (1998) 1231-1307.
%
%%\bibitem[Pop11]{popescu} C. D. Popescu,
%%\newblock Integral and $p$-adic refinements of the Abelian Stark conjecture,
%%\newblock in: Arithmetic of $L$-functions, IAS/Park City Math. Ser. \textbf{18}, Amer. Math. Soc., Providence, RI, (2011) 45-101.


\bibitem{portillo} F. X. Portillo-Bobadilla,
\newblock Experimental Evidence on a Refined Conjecture of the BSD type,
\newblock Bol. Soc. Mat. Mex. {\bf 25} (2019) 529-541.

%\bibitem{pot} J. Pottharst, 
%\newblock Analytic families of finite slope Selmer groups, 
%\newblock Algebra and Number Theory {\bf 7} (2013) 1571-1611.


%
%%\bibitem[RiWe96]{RW} J. Ritter, A. Weiss,
%%\newblock A Tate sequence for global units,
%%\newblock Compositio Math. {\bf 102} (1996), no. 2, 147-178.
%
%\bibitem{rubinlang} K. Rubin,
%\newblock The main conjecture,
%\newblock Appendix to: Cyclotomic Fields I and II by Serge Lang. Springer Graduate Texts \textbf{121}. Berlin: Springer (1990).
%
%\bibitem{rubincrelle} K. Rubin,
%\newblock Stark units and Kolyvagin's `Euler systems',
%\newblock J. reine Angew. Math. {\bf 425} (1992) 141-154.
%

%\bibitem{rubinCM} K. Rubin,
%\newblock $p$-adic $L$-functions and rational points on elliptic curves with complex multiplication,
%\newblock Invent. math. {\bf 107} (1992) 323-350.


%\bibitem{rubin} K. Rubin, 
%\newblock Abelian varieties, $p$-adic heights and derivatives,
%\newblock In Algebra and number theory (Essen, 1992), Walter de Gruyter, Berlin (1994), 247-266.

%\bibitem{rubinstark} K. Rubin,
%\newblock A Stark Conjecture `over $\bz$' for abelian $L$-functions with multiple zeros,
%\newblock Ann. Inst. Fourier \textbf{46} (1996) 33-62.
%
%\bibitem{rubinmodular} K. Rubin, 
%\newblock Euler systems and modular elliptic curves, 
%\newblock in: Galois representations in Arithmetic Algebraic Geometry, London Math. Soc., Lecture Note Series {\bf 254} (1998) 351-367


%\bibitem{R} K. Rubin,
%\newblock Euler systems,
%\newblock Annals of Math. Studies \textbf{147}, Princeton Univ. Press, 2000.
%
%\bibitem{sakamoto} R. Sakamoto,
%\newblock Stark systems over complete regular local rings,
%\newblock to appear in Algebra \& Number Theory.
%
%\bibitem{sano} T. Sano,
%\newblock Refined abelian Stark conjectures and the equivariant leading term conjecture of Burns,
%\newblock Compositio Math. {\bf 150} (2014) 1809-1835.
%
%\bibitem{sanojnt} T. Sano,
%\newblock A generalization of Darmon's conjecture for Euler systems for general $p$-adic representations,
%\newblock J. Number Theory {\bf 144} (2014) 281-324.

%\bibitem{sanotjm} T. Sano,
%\newblock On a conjecture for Rubin-Stark elements in a special case,
%\newblock Tokyo J. Math. {\bf 38} no.2, (2015) 459-476.
%
%%\bibitem{serre} J.-P. Serre,
%%\newblock Local Fields,
%%\newblock Graduate Texts in Math. {\bf 67}, Springer, Berlin, 1979.
%
%%\bibitem[Sie70]{siegel} C. L. Siegel,
%%\newblock \"Uber die Fourierschen Koeffizienten von Modulformen,
%%\newblock Nachr. Akad. Wiss. G\"ottingen Math.-Phys. Kl. II, (1970) 15-56.
%
%%\bibitem[Sil86]{silverman} J. H. Silverman,
%%\newblock The arithmetic of elliptic curves,
%%\newblock Springer, 1986.
%
%%\bibitem[Sna02]{snaith} V. P. Snaith,
%%\newblock Algebraic $E$-groups as Galois modules,
%%\newblock Progr. Math. {\bf 206}, Birkh\"auser, 2002.
%
%\bibitem{solomon} D. Solomon,
%\newblock On a construction of $p$-units in abelian fields,
%\newblock Invent. Math. {\bf 109} (1992) 329-350.
%
%%\bibitem[Sol08]{solomontwist} D. Solomon,
%%\newblock On twisted zeta-functions at $s=0$ and partial zeta-functions at $s=1$,
%%\newblock J. Number Theory {\bf 128} (2008) 105-143.
%
%\bibitem{solomon} D. Solomon,
%\newblock Abelian $L$-functions at $s=1$ and explicit reciprocity for Rubin-Stark elements,
%\newblock Acta Arithmetica 143.2 (2010) 145-189.

%\bibitem{soule} C. Soul\'e,
%\newblock $p$-adic K-theory of elliptic curves, 
%\newblock Duke Math. J. {\bf 54} (1987) 249-269.

%
%%\bibitem[Sta71]{stark1} H. M. Stark,
%%\newblock Values of $L$-functions at $s = 1$ I: $L$-functions for quadratic forms,
%%\newblock Advances in Math. {\bf 7} (1971) 301-343.
%
%%\bibitem[Sta75]{stark2} H. M. Stark,
%%\newblock $L$-functions at $s = 1$ II: Artin $L$-functions with rational characters,
%%\newblock Advances in Math. {\bf 17} (1) (1975) 60-92.
%
%%\bibitem[Sta76]{stark3} H. M. Stark,
%%\newblock $L$-functions at $s = 1$ III: Totally real fields and Hilbert's twelfth problem,
%%\newblock Advances in Math. {\bf 22} (1) (1976) 64-84.
%
%%\bibitem[Sta80]{stark4} H. M. Stark,
%%\newblock $L$-functions at $s = 1$ IV: First derivatives at $s = 0$,
%%\newblock Advances in Math. {\bf 35} (1980) 197-235.
%
%
%
%%\bibitem[Swa68]{swan} R. G. Swan,
%%\newblock Algebraic $E$-theory,
%%\newblock Lecture Note in Math. {\bf 76}, Springer Verlag, 1968.
%
%%\bibitem[\UTF{00E9}\UTF{00AB}\UTF{0098}\UTF{00E6}\UTF{009C}\UTF{0161}71]{takagi} \UTF{00E9}\UTF{00AB}\UTF{0098}\UTF{00E6}\UTF{009C}\UTF{0161}\UTF{00E8}\UTF{00B2}\UTF{009E}\UTF{00E6}\UTF{00B2}\UTF{00BB}\UTF{0081}E\UTF{0081}E%%\newblock \UTF{00E4}\UTF{00BB}£\UTF{00E6}\UTF{0095}°\UTF{00E7}\UTF{009A}\UTF{0081}E\UTF{0095}\UTF{017D}\UTF{00E6}\UTF{0095}°\UTF{00E8}\UTF{00AB}\UTF{0081}E\UTF{00E7}¬¬2\UTF{00E7}\UTF{0089}\UTF{0088}\UTF{00EF}\UTF{0152}\UTF{0081}E%%\newblock \UTF{00E5}\UTF{00B2}\UTF{00A9}\UTF{00E6}\UTF{00B3}¢\UTF{00E6}\UTF{009B}\UTF{017E}\UTF{00E5}\UTF{00BA}\UTF{0097}\UTF{00EF}\UTF{0152}\UTF{0081}E971.
%
%
%%\bibitem[Tan09]{tan} K.-S. Tan,
%%\newblock Generalized Stark formulae over function fields,
%%\newblock Trans. Am. Math. Soc. {\bf 361} (2009) 2277-2304.
%
%%\bibitem[Tat66]{tatetori} J. Tate,
%%\newblock The cohomology groups of tori in finite Galois extensions of
%%number fields,
%%\newblock Nagoya Math. J., {\bf 27} (1966) 709-719.
%
%%\bibitem[Tat76]{T} J. Tate,
%%\newblock Relations between $K_2$ and Galois cohomology,
%%Invent. math. \textbf{36} (1976) 257-274.
%
%\bibitem{tatebook} J. Tate,
%\newblock Les Conjectures de Stark sur les Fonctions $L$ d'Artin
%en $s=0$ (notes par D. Bernardi et N. Schappacher),
%\newblock Progress in Math., \textbf{47}, Birkh\"auser, Boston, 1984.
%
%%\bibitem{tatebalt} J. Tate,
%%\newblock Refining Gross's conjecture on the values of abelian $L$-functions,
%%\newblock Contemp. Math. {\bf 358} (2004) 189-192.
%
%%\bibitem[Tay81]{taylor} M. J. Taylor,
%%\newblock On Fr\"ohlich's conjecture for rings of integers of tame extensions,
%%\newblock Invent. Math. {\bf 63} (1981) 41-79.

%\bibitem{tsuji} T. Tsuji,
%\newblock Explicit reciprocity law and formal moduli for Lubin-Tate formal groups,
%\newblock J. reine. angew. Math. {\bf 569} (2004) 103-173.

%
%%\bibitem{vallieres} D. Valli\`eres,
%%\newblock The equivariant Tamagawa number conjecture
%%and the extended abelian Stark %conjecture,
%%\newblock submitted for publication.
%

%\bibitem{venerucci} R. Venerucci,
%\newblock Exceptional zero formulae and a conjecture of Perrin-Riou,
%\newblock Invent. Math. {\bf 203} (2016) 923-972.




%%\bibitem[Ven07]{venjakob} O. Venjakob,
%%\newblock From the Birch and Swinnerton-Dyer conjecture to non-commutative Iwasawa theory via the equivariant Tamagawa number conjecture - a survey,
%%\newblock $L$-functions and Galois representations, 333-380, London Math. Soc. Lecture Note Ser., {\bf 320}, Cambridge Univ. Press, Cambridge, 2007.
%
%%\bibitem[Ver96]{verdier} J. L. Verdier,
%%\newblock Des Cat\'egories D\'eriv\'ees des Cat\'egories Ab\'eliennes,
%%\newblock Asterisque {\bf 239}, Soc. Math. France, 1996.
%
%%\bibitem[Wei13]{weibel} C. Weibel,
%%\newblock The $E$-book: an introduction to algebraic $E$-theory,
%%\newblock Graduate Studies in Math. vol. 145, AMS, 2013.
%
%\bibitem{wiles} A. Wiles,
%\newblock Higher explicit reciprocity laws,
%\newblock Ann. of Math. \textbf{107} (1978) 235-254.
%
%%\bibitem[Wil90]{Wiles} A. Wiles,
%%\newblock The Iwasawa conjecture for totally real fields,
%%Ann. of Math. \textbf{131} (1990) 493-540.

%\bibitem{Wuth} C. Wuthrich,
%\newblock On $p$-adic heights in families of elliptic curves,
%\newblock J. London Math. Soc. (2) 70 (2004) 23-40.

%\bibitem{Wuth2} C. Wuthrich,
%\newblock Iwasawa theory of the fine Selmer group,
%\newblock J. Algebraic Geometry {\bf 16} (2007) 83-108.

%\bibitem{yakovlev} A. V. Yakovlev,
%\newblock Homological definability of $p$-adic representations of a ring with power basis,
%\newblock Izvestiya A N SSSR, ser. Mat. {\bf 34} (1970), 321-342. (Russian)

\end{thebibliography}
\end{document}